\title[Non-uniqueness for the transport equation]{Non-uniqueness for the transport equation with Sobolev vector fields}
\date{\today}
\author{Stefano Modena}
\address{Institut f\"ur Mathematik, Universit\"at Leipzig, D-04109 Leipzig, Germany}
\email{stefano.modena@math.uni-leipzig.de}
\author{L\'aszl\'o Sz\'ekelyhidi Jr.}
\address{Institut f\"ur Mathematik, Universit\"at Leipzig, D-04109 Leipzig, Germany}
\email{laszlo.szekelyhidi@math.uni-leipzig.de}
\theoremstyle{definition} \newtheorem{definition}{Definition}[section]
\theoremstyle{definition} \newtheorem{remark}[definition]{Remark}
\theoremstyle{plain} \newtheorem{lemma}[definition]{Lemma}
\theoremstyle{plain} \newtheorem{proposition}[definition]{Proposition}
\theoremstyle{plain} \newtheorem{theorem}[definition]{Theorem}
\theoremstyle{plain} \newtheorem{corollary}[definition]{Corollary}
\theoremstyle{definition} 
\theoremstyle{plain} 
\theoremstyle{definition}
\DeclareMathOperator{\sign}{sign}
\newcommand{\R}{\mathbb{R}}
\newcommand{\N}{\mathbb{N}}
\newcommand{\Z}{\mathbb{Z}}
\newcommand{\T}{\mathbb{T}}
\renewcommand{\div}{\textrm{div }}
\newcommand{\e}{\varepsilon}
\newcommand{\supp}{\mathrm{supp} \ }
\newcommand*{\mytag}[2]{(\ref{#1})$_{#2}$}
\def\grd@save@target#1{%
  \def\grd@target{#1}}
\def\grd@save@start#1{%
  \def\grd@start{#1}}
\tikzset{
  grid with coordinates/.style={
    to path={%
      \pgfextra{%
        \edef\grd@@target{(\tikztotarget)}%
        \tikz@scan@one@point\grd@save@target\grd@@target\relax
        \edef\grd@@start{(\tikztostart)}%
        \tikz@scan@one@point\grd@save@start\grd@@start\relax
        \draw[minor help lines] (\tikztostart) grid (\tikztotarget);
        \draw[major help lines] (\tikztostart) grid (\tikztotarget);
        \grd@start
        \pgfmathsetmacro{\grd@xa}{\the\pgf@x/1cm}
        \pgfmathsetmacro{\grd@ya}{\the\pgf@y/1cm}
        \grd@target
        \pgfmathsetmacro{\grd@xb}{\the\pgf@x/1cm}
        \pgfmathsetmacro{\grd@yb}{\the\pgf@y/1cm}
        \pgfmathsetmacro{\grd@xc}{\grd@xa + \pgfkeysvalueof{/tikz/grid with coordinates/major step}}
        \pgfmathsetmacro{\grd@yc}{\grd@ya + \pgfkeysvalueof{/tikz/grid with coordinates/major step}}
        \foreach \x in {\grd@xa,\grd@xc,...,\grd@xb}
        \node[anchor=north] at (\x,\grd@ya) {\pgfmathprintnumber{\x}};
        \foreach \y in {\grd@ya,\grd@yc,...,\grd@yb}
        \node[anchor=east] at (\grd@xa,\y) {\pgfmathprintnumber{\y}};
      }
    }
  },
  minor help lines/.style={
    help lines,
    step=\pgfkeysvalueof{/tikz/grid with coordinates/minor step}
  },
  major help lines/.style={
    help lines,
    line width=\pgfkeysvalueof{/tikz/grid with coordinates/major line width},
    step=\pgfkeysvalueof{/tikz/grid with coordinates/major step}
  },
  grid with coordinates/.cd,
  minor step/.initial=.2,           
  major step/.initial=1,            
  major line width/.initial=1pt,    
}
\begin{document}

\begin{abstract}
	We construct a large class of examples of non-uniqueness for the linear transport equation and the transport-diffusion equation with divergence-free vector fields in Sobolev spaces $W^{1, p}$.
\end{abstract}

\maketitle

\section{Introduction}

This paper concerns the problem of (non)uniqueness of solutions to the transport equation in the periodic setting
\begin{align}
\label{e:transport}
	\partial_t\rho+u\cdot \nabla\rho &=0,\\
\label{e:initial:cond}
	\rho_{|t=0}&=\rho^0
\end{align}
where $\rho:[0,T]\times \T^d\to\R$ is a scalar density, $u:[0,T]\times \T^d\to\R^d$ is a given vector field and $\T^d = \R^d / \Z^d$ is the $d$-dimensional flat torus. 

Unless otherwise specified, we assume in the following that $u \in L^1$ is \emph{incompressible}, i.e. 
\begin{equation}\label{e:incompressible}
	\div u=0
\end{equation}
in the sense of distributions. Under this condition, \eqref{e:transport} is formally equivalent to the continuity equation
\begin{equation}\label{eq:continuity}
\partial_t \rho + \div(\rho u) 	= 0.
\end{equation}

It is well known that the theory of classical solutions to \eqref{e:transport}-\eqref{e:initial:cond} is closely connected to the ordinary differential equation
\begin{equation}\label{e:ODE}
\begin{split}
	\partial_t X(t,x) &=u(t, X(t,x)),\\
	X(0,x) &= x,
\end{split}	
\end{equation}
via the formula $\rho(t, X(t,x))=\rho^0(x)$. In particular, for Lipschitz vector fields $u$ the well-posedness theory for \eqref{e:transport}-\eqref{e:initial:cond} follows from the Cauchy-Lipschitz theory for ordinary differential equations applied to \eqref{e:ODE}; on the other side, the  inverse flow map $\Phi(t) := X(t)^{-1}$ solves the transport equation
\begin{equation}
\label{eq:inverse:flow}
\begin{aligned}
\partial_t \Phi + (u \cdot \nabla) \Phi & = 0, \\
\Phi|_{t=0} & = \textrm{id}. 
\end{aligned}
\end{equation}

There are several PDE models, related, for instance, to fluid dynamics or to the theory of conservation laws (see for instance \cite{DiPerna:1989:Annals,Crippa:2015er,LeBris2008,Lions:1996vo,Lions:1998vp}), where one has to deal with vector fields which are not necessarily Lipschitz, but have lower regularity and therefore it is important to investigate the well-posedness of  \eqref{e:transport}-\eqref{e:initial:cond} in the case of non-smooth vector fields. 

Starting with the groundbreaking work of DiPerna-Lions \cite{DiPerna:1989vo} there is a wealth of well-posedness results for vector fields which are Sobolev or BV (we refer to the recent survey \cite{Ambrosio2017}, see also below) and in particular in recent years a lot of effort has been devoted to understanding how far the regularity assumptions can be relaxed. The main goal of this paper is to provide a lower bound on the regularity assumptions by showing, to our knowledge for the first time, that well-posedness can fail quite spectacularly even in the Sobolev setting, with $u \in C_t W^{1,\tilde p}_x := C([0,T]; W^{1,\tilde p}(\T^d))$ (see Theorem \ref{thm:main} for the precise statement). The mechanism we exploit to produce such ``failure of uniqueness'' is so strong that it can be applied also to the transport-diffusion equation
\begin{equation}
\label{eq:transport-diffusion}
\partial_t \rho + \div(\rho u) = \Delta \rho
\end{equation}
thus producing Sobolev vector fields $u \in C_t W^{1, \tilde p}_x$ for which uniqueness of solutions to \eqref{eq:transport-diffusion}-\eqref{e:initial:cond} fails in the class of densities $\rho \in C_t L^p$ (see Theorem \ref{thm:diffusion}). 

Both theorems can be generalized as follows: we can construct vector fields with arbitrary large regularity $u \in W^{\tilde m,\tilde p}$, $\tilde m \in \N$, for which uniqueness of solutions to \eqref{e:transport}-\eqref{e:initial:cond} or \eqref{eq:transport-diffusion}-\eqref{e:initial:cond} fails, in the class of densities $\rho \in W^{m, p}$, with arbitrary large $m \in \N$; moreover, we can do that even when on the r.h.s. of \eqref{eq:transport-diffusion} there is a higher order diffusion operator (see Theorems \ref{thm:strong} and \ref{thm:diffusion:higher}). 

Before stating the precise statements of these results, we present a brief (and far from complete) overview of the main well-posedness achievements present in the literature. We start with the analysis of the well-posedness for the transport equation in class of bounded densities, then we pass to the analysis of well-posedness for the transport equation in the class of $L^p$-integrable densities, with the statement of our Theorems \ref{thm:main} and \ref{thm:strong} and finally we discuss the transport-diffusion equation, with the statements of our Theorems \ref{thm:diffusion} and \ref{thm:diffusion:higher}. The last part of this introduction is devoted to a brief overview of the main techniques used in our proofs.

\subsection{The case of bounded densities}
\label{ss:bounded:densities}
The literature about \emph{rough} vector fields mainly concerns the well-posedness of \eqref{e:transport}-\eqref{e:initial:cond} in the class of bounded densities, $\rho \in L^\infty$. The reason for that can be found in the fact that the scientific community has been mainly interested in the well-posedness of ODE \eqref{e:ODE} and has used the PDE as a tool to attack the ODE problem: the general strategy is that a well-posedness result for the transport equation in the class of bounded densities yields a unique solution to the PDE \eqref{eq:inverse:flow} and thus one tries to prove that the flow $X(t) := \Phi(t)^{-1}$ is the unique meaningful solution, in the sense of \emph{regular Lagrangian flow}, to the ODE \eqref{e:ODE}. We refer to \cite{Ambrosio2017} for a precise definition of the notion of \emph{regular Lagrangian flow} and for a detailed discussion about the link between the Eulerian and the Lagrangian approach. 


Let us observe that for $\rho \in L^\infty$ the quantity $\rho u \in L^1$ and thus one can consider solutions to $\eqref{e:transport}$ (or, equivalently, to \eqref{eq:continuity}, since we are assuming incompressibility of the vector field) in distributional sense: $\rho$ is a \emph{distributional or weak solution} if 
\begin{equation}
\label{eq:weak:sln}
\int_0^T \int_{\T^d} \rho[\partial_t \varphi + u \cdot \nabla \varphi] dxdt= 0,
\end{equation}
for every $\varphi \in C^\infty_c ((0,T) \times \T^d)$. It is usually not difficult to prove existence of weak solutions, even if the vector field is very rough, taking advantage of the linearity of the equation. A much bigger issue is the uniqueness problem. 

The first result in this direction dates back to R. DiPerna and P.L. Lions \cite{DiPerna:1989vo} in 1989, when they proved uniqueness, in the class of bounded densities, for vector fields $u \in L^1_t W^{1,1}_x$ with bounded divergence. This result was extended in 2004 by L. Ambrosio in \cite{Ambrosio:2004cva} to vector fields $u \in L^1_t BV_x \cap L^\infty$ and with bounded divergence (see also \cite{Colombini:2002wp,Colombini:2003wl}) and very recently by S.~Bianchini and P.~Bonicatto in \cite{Bianchini:2017vf} for vector fields $u \in L^1_t BV_x$ which are merely \emph{nearly incompressible} (see, for instance, \cite{Ambrosio2017} for a definition of \emph{nearly incompressibility}).

The proofs of these results are very subtle and involves several deep ideas and sophisticated techniques. We could however try to summarize the heuristic behind all of them as follows: (very) roughly speaking, a Sobolev or BV vector field $u$ is Lipschitz-like (i.e. $Du$ is bounded) on a large set and there is just a small ``bad'' set, where $Du$ is very large. On the big set where $u$ is ``Lipschitz-like'', the classical uniqueness theory applies. Non-uniqueness phenomena could thus occur only on the small ``bad'' set. Uniqueness of solutions in the class of bounded densities is then a consequence of the fact that a \emph{bounded} density $\rho$ can not ``see'' this bad set, or, in other words, cannot concentrate on this bad set. 

With this rough heuristic in mind it is also perhaps not surprising that the theory cited above is heavily measure-theoretic. Nevertheless, the well-posedness for the ODE \eqref{e:ODE} fundamentally relies on the analysis and well-posedness theory of the associated PDE \eqref{e:transport}. More precisely, in \cite{DiPerna:1989vo} DiPerna and Lions introduced the notion of \emph{renormalized solution}. One calls a density $\rho \in L^1_{tx}$ renormalized for \eqref{e:transport} (for given $u$), if for any $\beta\in L^{\infty}(\R)\cap C^1(\R)$ it holds
\begin{equation}\label{e:renormalized}
\partial_t\beta(\rho)+u\cdot \nabla \beta(\rho)=0
\end{equation}
in the sense of distributions. Analogously to entropy-conditions for hyperbolic conservation laws, \eqref{e:renormalized} provides additional stability under weak convergence. Key to the well-posedness theory is then showing that any bounded distributional solution $\rho$ of \eqref{e:transport} is renormalized - this is done by showing convergence of the commutator 
\begin{equation}\label{e:commutator}
(u\cdot\nabla\rho)_{\e}-u_{\e}\cdot\nabla\rho_\e\,\to 0
\end{equation}
arising from suitable regularizations.

As we mentioned, uniqueness at the PDE level in the class of bounded densities implies, in all the cases considered above, uniqueness at the ODE level (again in the sense of the \emph{regular Lagrangian flow}). On the other hand, based on a self-similar mixing example of M.~Aizenmann in \cite{Aizenman:1978tx}, N.~Depauw in \cite{Depauw:2003wl} constructed an example of non-uniqueness for weak solutions with $\rho\in L^{\infty}((0,T)\times\T^d)$ and $u\in L^1(\e,1;BV(\T^d))$ for any $\e>0$ but $u\notin L^1(0,1;BV(\T^d))$. This example has been revisited in \cite{Colombini:2003wl,Alberti:2014cx,Alberti:2016wq,Yao:2017bp}. It should be observed, though, that the phenomenon of non-uniqueness in such ``mixing'' examples is \emph{Lagrangian} in the sense that it is a consequence of the degeneration of the flow map $X(t,x)$ as $t\to 0$; in particular, once again, the link between \eqref{e:transport} and \eqref{e:ODE} is crucial. 

%
%


%
%
%

\subsection{The case of unbounded densities}
\label{ss:unbounded:densities}

There are important mathematical models, related, for instance, to the Boltzmann equation (see \cite{DiPerna:1989:Annals}), incompressible 2D Euler \cite{Crippa:2015er}, or to the compressible Euler equations, in which the density under consideration is not bounded, but it belongs just to some $L^\infty_t L^p_x$ space. It is thus an important question to understand the well-posedness of the Cauchy problem \eqref{e:transport}-\eqref{e:initial:cond} in such larger functional spaces.

As a first step, we observe that for a density $\rho \in L^\infty_t L^p_x$ and a field $u \in L^1_t L^1_x$, the product $\rho u$ is not well defined in $L^1$ and thus the notion of weak solution as in \eqref{eq:weak:sln} has to be modified. There are several possibilities to overcome this issue. We mention two of them: either we require that $u \in L^1_t L^{p'}_x$, where $p'$ is the dual H\"older exponent to $p$, or we consider a notion of solution which cut off the regions where $\rho$ is unbounded. Indeed, this second possibility is encoded in the notion of renormalized solution \eqref{e:renormalized}. 

The well-posedness theory provided by \eqref{e:renormalized} for bounded densities is sufficient for the existence of a regular Lagrangian flow, which in turn leads to \emph{existence} also for unbounded densities. For the \emph{uniqueness}, an additional integrability condition is required:

\begin{theorem}[DiPerna-Lions \cite{DiPerna:1989vo}] 
\label{thm:di:perna:lions}
Let $p,\tilde p\in [1,\infty] $ and let $u\in L^1(0,T;W^{1,\tilde p}(\T^d))$ be a vector field with $\div u=0$. For any $\rho^0\in L^p(\T^d)$ there exists a unique renormalized solution of \eqref{e:transport}-\eqref{e:initial:cond}, satisfying $\rho\in C([0,T];L^p(\T^d))$. Moreover, if 
\begin{equation}\label{e:DP}
\frac{1}{p}+\frac{1}{\tilde p}\leq 1
\end{equation}
then this solution is \emph{unique} among all weak solutions with $\rho\in L^{\infty}(0,T;L^p(\T^d))$.
\end{theorem}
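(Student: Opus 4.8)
The plan is to run the classical argument of DiPerna and Lions, whose two pillars are a commutator estimate forcing weak solutions to be renormalized and an elementary a priori identity for renormalized solutions obtained by testing against the constant function on $\T^d$. The heart of the matter, and what I expect to be the main obstacle, is the \emph{commutator lemma}. Note first that under \eqref{e:DP} one has $u\in L^1_tW^{1,\tilde p}_x\subseteq L^1_tL^{\tilde p}_x\subseteq L^1_tL^{p'}_x$, so $\rho u\in L^1_{t,x}$ and the weak formulation \eqref{eq:weak:sln} is meaningful for every $\rho\in L^\infty_tL^p_x$. Given such a weak solution, mollify in space: $\rho_\e:=\rho*_x\phi_\e$ solves $\partial_t\rho_\e+u\cdot\nabla\rho_\e=r_\e$, where $r_\e:=u\cdot\nabla(\rho*_x\phi_\e)-(u\cdot\nabla\rho)*_x\phi_\e$ is the commutator \eqref{e:commutator}. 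Writing $r_\e$ as an integral of difference quotients of $u$ tested against $\nabla\phi_\e$ and Taylor-expanding $u$, one obtains $\|r_\e\|_{L^1_tL^1_x}\le C\|\rho\|_{L^\infty_tL^p_x}\|Du\|_{L^1_tL^{\tilde p}_x}$ uniformly in $\e$ (first for smooth $u$, then in general by density in $L^1_tW^{1,\tilde p}_x$), together with the strong convergence $r_\e\to0$ in $L^1_{t,x}$. Both the bound and the convergence rely on the Sobolev regularity of $u$ meshing precisely with the mollification, and \eqref{e:DP} is exactly the H\"older condition that keeps the products integrable; the endpoints $p\in\{1,\infty\}$ will require some extra care.

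Granting the commutator lemma, the chain rule applied to the (spatially smooth) $\rho_\e$ gives, using $\div u=0$, the identity $\partial_t\beta(\rho_\e)+\div(\beta(\rho_\e)u)=\beta'(\rho_\e)r_\e$ for every $\beta\in C^1(\R)\cap W^{1,\infty}(\R)$; letting $\e\to0$, with $\rho_\e\to\rho$ strongly in $L^1_tL^p_x$ so that $\beta(\rho_\e)u\to\beta(\rho)u$ in $L^1_{t,x}$ by dominated convergence and $\beta'(\rho_\e)r_\e\to0$ in $L^1_{t,x}$, shows that \emph{every} weak solution $\rho\in L^\infty_tL^p_x$ under \eqref{e:DP} is renormalized in the sense of \eqref{e:renormalized}. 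Since then $\partial_t\beta(\rho)=-\div(\beta(\rho)u)\in L^1_tW^{-1,1}_x$, a routine argument shows $\rho$ has a representative in $C_tL^p$.

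For uniqueness among weak solutions in $L^\infty_tL^p_x$, take two of them with the same datum and set $\sigma:=\rho_1-\rho_2$; this is a weak solution of \eqref{e:transport} with $\sigma|_{t=0}=0$, hence renormalized. Choosing $\beta\in C^1(\R)\cap W^{1,\infty}(\R)$ with $\beta(0)=0$ and $\beta(s)>0$ for $s\neq0$, say $\beta(s)=s^2/(1+s^2)$, and testing \eqref{e:renormalized} for $\sigma$ against a function of $t$ alone yields $\tfrac{d}{dt}\int_{\T^d}\beta(\sigma(t,x))\,dx=0$ in $\mathcal{D}'(0,T)$; since $\sigma\in C_tL^p$ with $\sigma(0)=0$ the integral vanishes identically, forcing $\beta(\sigma)\equiv0$ and hence $\sigma\equiv0$. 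Uniqueness within the class of renormalized solutions in $C_tL^p$ — claimed for all $p,\tilde p$ — reduces to the special case $p=\infty$, $\tilde p=1$, which does satisfy \eqref{e:DP}: if $\rho_1,\rho_2$ are renormalized in $C_tL^p$ with the same datum, then for a bounded injective $\beta\in C^1(\R)$, say $\beta=\arctan$, the functions $\beta(\rho_1),\beta(\rho_2)$ are bounded renormalized (hence weak) solutions with datum $\beta(\rho^0)$, so they coincide by the $L^\infty$ uniqueness just proven, and injectivity of $\beta$ gives $\rho_1=\rho_2$.

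It remains to prove existence. Regularize the datum, $\rho^0_\e:=\rho^0*\phi_\e\in C^\infty(\T^d)$, and the field, $u_\e:=u*_x\psi_\e$, still divergence free and now smooth in $x$; solving \eqref{e:transport}-\eqref{e:initial:cond} along the characteristics of $u_\e$ produces $\rho_\e$ with $\partial_t\rho_\e=-\div(\rho_\e u_\e)\in L^1_tW^{-1,1}_x$ and, by incompressibility, $\|\rho_\e(t)\|_{L^p}=\|\rho^0_\e\|_{L^p}\le\|\rho^0\|_{L^p}$ for every $t$. Up to a subsequence $\rho_\e\rightharpoonup\rho$ weakly-$*$ in $L^\infty_tL^p_x$ with $\rho$ weakly continuous in time; one checks that $\rho$ is a weak solution with datum $\rho^0$, and, since each $\rho_\e$ is an exact renormalized solution for $u_\e$ and $u_\e\to u$ in $L^1_tW^{1,\tilde p}_x$, a stability argument (again resting on the commutator lemma) shows $\rho$ is renormalized for $u$. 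Conservation of the $L^p$ norm together with $\rho_\e(t)\rightharpoonup\rho(t)$ upgrades to strong convergence $\rho_\e\to\rho$ in $C_tL^p$ — by uniform convexity for $1<p<\infty$, by the Dunford--Pettis criterion for $p=1$, and directly for $p=\infty$ — so that $\rho\in C_tL^p$ with $\|\rho(t)\|_{L^p}=\|\rho^0\|_{L^p}$, and the uniqueness from the previous step identifies this $\rho$ as the unique renormalized solution.
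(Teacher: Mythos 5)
The paper does not give a proof of Theorem \ref{thm:di:perna:lions}; it is quoted from DiPerna--Lions, so there is no in-paper argument to compare against. Judged on its own, your sketch reproduces the original DiPerna--Lions argument correctly: the commutator estimate under \eqref{e:DP}, the deduction that weak solutions in $L^\infty_tL^p_x$ are renormalized, the $\beta$-argument for uniqueness, and the clean reduction of uniqueness of renormalized solutions to the bounded case via an injective bounded truncation $\beta=\arctan$ (noting that $p=\infty$, $\tilde p\ge 1$ always satisfies \eqref{e:DP}) are all sound.

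The one step that, as written, is circular is the upgrade to strong convergence in the existence argument. You invoke conservation of $\|\rho(t)\|_{L^p}$ for the limit $\rho$ to conclude $\rho_\e\to\rho$ strongly via uniform convexity; but that norm identity is itself a consequence of $\rho$ being renormalized, which is precisely what you are in the middle of establishing by stability — and stability, in turn, requires some strong convergence in order to pass $\beta(\rho_\e)\to\beta(\rho)$ through the nonlinearity. The standard repair is the same device you already used for uniqueness: for a fixed bounded injective $\beta$ such as $\arctan$, the functions $\beta(\rho_\e)$ are uniformly bounded weak solutions of \eqref{e:transport} with the regularized fields $u_\e$, so the DiPerna--Lions stability theorem for \emph{bounded} densities (applicable since $u\in L^1_tW^{1,1}_x$) gives $\beta(\rho_\e)\to\sigma$ strongly in $L^q_{loc}$ for every finite $q$, with $\sigma$ the bounded renormalized solution for datum $\beta(\rho^0)$. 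Injectivity and the uniform $L^p$ bound then produce a.e.\ convergence of $\rho_\e$ to a limit $\rho$, after which $\beta(\rho_\e)\to\beta(\rho)$ for every bounded $\beta$ by dominated convergence, showing $\rho$ is renormalized; the $C_tL^p$ continuity and the norm identity follow afterwards, no longer circularly. Alternatively, as the paper itself hints, existence can be obtained directly by pushing $\rho^0$ forward along the regular Lagrangian flow, which avoids the compactness argument entirely. Everything else in your write-up matches the DiPerna--Lions proof.
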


As we have already observed for the case of bounded densities, also in this more general setting existence of weak and renormalized solutions is not a difficult problem. It is as well not hard to show uniqueness in the class of renormalized solutions, using the fact that renormalized solutions in $L^\infty_t L^p_x$ have constant in time $L^p$ norm (it suffices to choose $\beta$ as a bounded smooth approximation of $\tau \mapsto |\tau|^p$). 

The crucial point in Theorem \ref{thm:di:perna:lions} concerns the uniqueness of the renormalized solution among all the weak solutions in $L^\infty_t L^p_x$, provided \eqref{e:DP} is satisfied. The reason why such uniqueness holds can be explained by the same heuristic as in the case of bounded densities: a vector field in $W^{1,\tilde p}$ is ``Lipschitz-like'' except on a small bad set, which can not ``be seen'' by a density in $L^p$, if \eqref{e:DP} holds, i.e. if $p$, although it is less than $\infty$, is sufficiently large w.r.t. $\tilde p$. On the more technical side, the integrability condition \eqref{e:DP} is necessary in the proof in \cite{DiPerna:1989vo} to show convergence of the commutator \eqref{e:commutator} in $L^1_{loc}$. 

The following question is therefore left open: does uniqueness of weak solutions hold in the class of densities $\rho \in L^\infty_t L^p_x$ for a vector field in $L^1_t L^{p'}_x \cap L^1_t W^{1, \tilde p}_x$, when \eqref{e:DP} fails?  

In a recent note \cite{Caravenna:2016kg} L.~Caravenna and G.~Crippa addressed this issue for the case $p=1$ and $\tilde p>1$, announcing the result that uniqueness holds under the additional assumption that $u$ is continuous. 

In this paper we show that if 
\begin{equation}
\label{eq:p:ineq}
\frac{1}{p} + \frac{1}{\tilde p} > 1 + \frac{1}{d-1}.
\end{equation}
then, in general, uniqueness fails. We remark that the Sobolev regularity of the vector field $u \in L^1_t W^{1,\tilde p}_x$ implies the existence of a unique regular Lagrangian flow (see in particular \cite{Ambrosio2015}). Nevertheless, quite surprisingly, our result shows that such Lagrangian uniqueness is of no help to get uniqueness on the Eulerian side. 

Previously, examples of such Eulerian non-uniqueness have been constructed, for instance, in \cite{Crippa:2014ta}, based on the method of convex integration from \cite{DeLellis:2009jh}, yielding merely bounded velocity $u$ and density $\rho$. However, such examples do not satisfy the differentiability condition $u \in W^{1, \tilde p}$ for any $\tilde p\geq 1$ and therefore do not possess an associated Lagrangian flow.

Here is the statement of our first and main result. 

\begin{theorem}
\label{thm:main}
Let $\e>0$, $\bar \rho \in C^\infty([0,T] \times \T^d)$, with
\begin{equation*}
\int_{\T^d} \bar \rho(0,x) dx = \int_{\T^d} \bar \rho(t,x) dx \text{ for every } t \in [0,T].
\end{equation*} 
Let $p \in (1, \infty)$, $\tilde p \in [1, \infty)$ such that \eqref{eq:p:ineq} holds.
Then there exist $\rho : [0,T] \times \T^d \to \R$, $u:[0,T] \times \T^d \to \R^d$ such that
\begin{enumerate}[(a)]
\item $\rho \in C\bigl([0,T]; L^p (\T^d)\bigr)$, $u \in C\bigl([0,T]; W^{1,\tilde p} (\T^d)\cap L^{p'} (\T^d)\bigr)$;
\item $(\rho, u)$ is a weak solution to \eqref{e:transport} and \eqref{e:incompressible};
\item at initial and final time $\rho$ coincides with $\bar \rho$, 
i.e. 
\begin{equation*}
\rho(0,\cdot) = \bar \rho(0, \cdot),  \quad \rho(T, \cdot) = \bar \rho(T, \cdot);
\end{equation*}
\item $\rho$ is $\e$-close to $\bar \rho$ i.e.
\begin{equation*}
\begin{split}
\sup_{t \in [0,T]} \big\|\rho(t,\cdot) - \bar \rho(t, \cdot)\big\|_{L^p(\T^d)} 				& \leq \e. 
\end{split}
\end{equation*}
\end{enumerate}
\end{theorem}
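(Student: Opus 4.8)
\emph{Strategy.} I would prove the theorem by a convex integration scheme for the continuity equation \eqref{eq:continuity}, which under \eqref{e:incompressible} is equivalent to \eqref{e:transport}. It is convenient to work with the relaxed problem, the \emph{continuity--defect equation}
\begin{equation*}
\partial_t\rho+\div(\rho u)=-\div R,\qquad \div u=0,
\end{equation*}
for triples $(\rho,u,R)$ with $R\colon[0,T]\times\T^d\to\R^d$ the \emph{defect field}; a triple with $R\equiv0$ is a weak solution of \eqref{eq:continuity}. The heart of the matter is an iteration step, a \emph{Main Proposition}: there are constants $M,\kappa$ depending only on $d,p,\tilde p$ such that, given any smooth triple $(\rho_0,u_0,R_0)$ solving the continuity--defect equation with $R_0$ vanishing near $\{t=0\}\cup\{t=T\}$, and any $\eta>0$, one can construct a new smooth triple $(\rho_1,u_1,R_1)$ solving the same equation, with $R_1$ again vanishing near $\{t=0\}\cup\{t=T\}$, with $\supp(\rho_1-\rho_0)\cup\supp(u_1-u_0)\subset(0,T)\times\T^d$, and with
\begin{equation*}
\|\rho_1-\rho_0\|_{C_tL^p}+\|u_1-u_0\|_{C_tW^{1,\tilde p}}+\|R_1\|_{C_tL^1}\le\eta,\qquad
\|u_1-u_0\|_{C_tL^{p'}}\le M\|R_0\|_{C_tL^1}^{1/p'}\eta^{-\kappa}.
\end{equation*}

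\emph{From the Main Proposition to the theorem.} First I would replace $\bar\rho$ by a smooth $\bar\rho_\ast$ that equals $\bar\rho$ at $t=0$ and $t=T$, is constant in $t$ on short neighbourhoods of those endpoints (so $\partial_t\bar\rho_\ast$ vanishes there), still has $\int_{\T^d}\bar\rho_\ast(t,\cdot)\,dx$ independent of $t$, and satisfies $\|\bar\rho_\ast-\bar\rho\|_{C_tL^p}\le\e/2$; this is an elementary time--interpolation. I then run the iteration starting from $\rho_0:=\bar\rho_\ast$, $u_0:=0$, $R_0:=-\nabla\Delta^{-1}\partial_t\bar\rho_\ast$: the hypothesis $\int_{\T^d}\partial_t\bar\rho\,dx=0$ is exactly what makes $R_0$ a well--defined smooth field, and it vanishes near $\{t=0\}\cup\{t=T\}$ by construction. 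Applying the Main Proposition repeatedly with $\eta=\eta_q:=\e\,2^{-q-2}$, and at each step choosing $\|R_q\|_{C_tL^1}$ small enough (which is allowed) that $M\|R_q\|_{C_tL^1}^{1/p'}\eta_{q+1}^{-\kappa}\le2^{-q}$, makes $\sum_q\|\rho_{q+1}-\rho_q\|_{C_tL^p}$, $\sum_q\|u_{q+1}-u_q\|_{C_tW^{1,\tilde p}}$, $\sum_q\|u_{q+1}-u_q\|_{C_tL^{p'}}$ and $\sum_q\|R_{q+1}\|_{C_tL^1}$ all converge. Hence $\rho_q\to\rho$ in $C_tL^p$, $u_q\to u$ in $C_tW^{1,\tilde p}\cap C_tL^{p'}$ and $R_q\to0$; the limit $(\rho,u)$ is a weak solution of \eqref{eq:continuity}, hence of \eqref{e:transport}, with regularity (a); since all perturbations are time--interior it satisfies $\rho(0,\cdot)=\bar\rho(0,\cdot)$, $\rho(T,\cdot)=\bar\rho(T,\cdot)$, giving (b),(c); and $\|\rho-\bar\rho\|_{C_tL^p}\le\sum_q\eta_q+\e/2\le\e$, giving (d).

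\emph{Building blocks and the role of \eqref{eq:p:ineq}.} The content of the Main Proposition is the construction of $\vartheta_1:=\rho_1-\rho_0$ and $w_1:=u_1-u_0$. Following the Mikado--flow philosophy I would take them to be finite sums, over a fixed set of directions $\xi$, of building blocks $a_\xi(t,x)\Theta_\xi(\lambda x)$ (density) and $a_\xi(t,x)W_\xi(\lambda x)$ (velocity), plus lower--order correctors restoring $\div w_1=0$ and absorbing $\partial_t\vartheta_1$. Here $\lambda\in\N$ is a large frequency; the smooth amplitudes $a_\xi$ come from a pointwise algebraic identity $\sum_\xi a_\xi^2\,\xi=R_0$, solvable with $a_\xi\gtrsim|R_0|^{1/2}$ (adding a vanishingly small constant inside the square root to keep them smooth), which is precisely what lets the leading error cancel $R_0$; and $\Theta_\xi,W_\xi$ are $\T^d$--periodic profiles, supported in a $\mu^{-1}$--tube around a periodic family of parallel lines in direction $\xi$ (the families for distinct $\xi$ arranged with disjoint supports, possible since $d\ge3$, implicit in \eqref{eq:p:ineq}), with $W_\xi$ divergence free and parallel to $\xi$, normalised so that $\fint\Theta_\xi W_\xi=\xi$ and $\fint\Theta_\xi=\fint W_\xi=0$. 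The new ingredient compared with the classical non--concentrated Mikado construction is the concentration parameter $\mu\ge1$: if $\Theta_\xi$ has height $\sim\mu^{\gamma}$ and $W_\xi$ height $\sim\mu^{\gamma'}$ on the tube, the normalisation forces $\gamma+\gamma'=d-1$, while $\|\Theta_\xi\|_{L^p}\sim\mu^{\gamma-\frac{d-1}{p}}$, $\|\nabla W_\xi\|_{L^{\tilde p}}\sim\mu^{\gamma'+1-\frac{d-1}{\tilde p}}$, $\|W_\xi\|_{L^{p'}}\sim\mu^{\gamma'-\frac{d-1}{p'}}$. The first two can both be made arbitrarily small by taking $\mu$ large provided one can pick $\gamma$ with $\gamma<\frac{d-1}{p}$ and $d-1-\gamma<\frac{d-1}{\tilde p}-1$; such a $\gamma$ exists if and only if $\frac{d-1}{p}+\frac{d-1}{\tilde p}>d$, i.e.\ exactly \eqref{eq:p:ineq}. (With such a $\gamma$ one has $\gamma'>\frac{d-1}{p'}$, so $\|W_\xi\|_{L^{p'}}$ can only be bounded, not made small, in terms of $\mu$ — this is the source of the harmless factor $\eta^{-\kappa}$.)

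\emph{Closing the estimates — the main obstacle.} The remaining and, I expect, most laborious step is to choose $\mu$ large (so $\|\vartheta_1\|_{C_tL^p}$ and $\|w_1\|_{C_tW^{1,\tilde p}}$ fall below $\eta$, using the scalings above together with $\|a_\xi\|_{L^p}\lesssim\|R_0\|_{C_tL^1}^{1/p}$) and then $\lambda\gg\mu$ even larger so that the new defect $R_1$ is below $\eta$ in $C_tL^1$. By construction $-\div R_1$ is the sum of a high--frequency error $\div\bigl(\sum_\xi a_\xi^2\Theta_\xi W_\xi-R_0\bigr)$, a linear error $\partial_t\vartheta_1+\div(\rho_0 w_1+\vartheta_1 u_0)$, and corrector errors; each is handled by the Calder\'on--Zygmund antidivergence $\nabla\Delta^{-1}$, which gains $\lambda^{-1}$ on functions oscillating at frequency $\lambda$, together with the fact that (unlike their $L^p$/$L^{p'}$ sizes) the $L^1$ sizes $\|\Theta_\xi\|_{L^1}\sim\mu^{\gamma-(d-1)}$ and $\|W_\xi\|_{L^1}\sim\mu^{\gamma'-(d-1)}$ tend to $0$ as $\mu\to\infty$ (using $0<\gamma<d-1$), so that once $\mu$ is fixed, sending $\lambda\to\infty$ drives every error below $\eta$. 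The real burden is the bookkeeping: controlling all $\mu$-- and $\lambda$--dependent constants (and a few derivatives of $R_0$, after mollifying $R_0$ at a suitable scale) so that the parameter windows are non--empty at every step of the iteration. The algebraic skeleton and the functional--analytic choices, by contrast, are entirely dictated by the scaling computation above and by \eqref{eq:p:ineq}.
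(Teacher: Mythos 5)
Your overall strategy coincides with the paper's: relax to the continuity--defect equation, perturb by concentrated Mikado flows with an oscillation parameter $\lambda$ and a concentration parameter $\mu$, balance $\gamma+\gamma'=d-1$, and observe that the three required scalings can hold simultaneously precisely when \eqref{eq:p:ineq} holds. The building-block computation and the role of the condition are exactly right.

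However, the iteration does not close as you have set it up, and this is a genuine gap, not bookkeeping. Your Main Proposition bundles $\|\rho_1-\rho_0\|_{C_tL^p}$, $\|u_1-u_0\|_{C_tW^{1,\tilde p}}$ and $\|R_1\|_{C_tL^1}$ under a single small parameter $\eta$, while $\|u_1-u_0\|_{C_tL^{p'}}\lesssim\|R_0\|^{\theta}\eta^{-\kappa}$ is allowed to grow as $\eta\to0$. The trilinear structure $\vartheta\,w\approx R_0$ forces $\|\vartheta\|_{L^p}\|w\|_{L^{p'}}\gtrsim\|R_0\|_{L^1}$, so if $\|\rho_1-\rho_0\|_{L^p}\leq\eta$ then necessarily $\|u_1-u_0\|_{L^{p'}}\gtrsim\|R_0\|_{L^1}/\eta$, i.e.\ $\kappa\geq1$. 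Now if $\|R_q\|\leq\eta_q$ and you apply the step with parameter $\eta_{q+1}$, the $L^{p'}$ increment is $\gtrsim\|R_q\|\,\eta_{q+1}^{-1}$. To make this summable while keeping $\eta_q$ decreasing and $\sum\eta_q<\infty$ is impossible: with $\kappa\geq1>1/p'$ the series $\sum_q\eta_q^{1/p'}\eta_{q+1}^{-\kappa}$ diverges for any admissible $\eta_q\downarrow0$, and your suggestion to ``choose $\|R_q\|$ even smaller'' only shifts the problem, since shrinking $\|R_q\|$ means running step $q-1\to q$ with a smaller $\eta_q$, which inflates $\|u_q-u_{q-1}\|_{L^{p'}}\gtrsim\|R_{q-1}\|\eta_q^{-\kappa}$; tracing this back forces $\eta_1>\eta_2>\eta_1^{1/(p'\kappa)}>\eta_1$, a contradiction. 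The paper avoids this by keeping the two roles of $\eta$ separate: in Proposition~\ref{p:main}, one parameter $\eta$ is held \emph{fixed} throughout the iteration and only enters as a multiplicative factor in the $L^p$/$L^{p'}$ bounds, namely $\|\rho_1-\rho_0\|_{L^p}\leq M\eta\|R_0\|_{L^1}^{1/p}$ and $\|u_1-u_0\|_{L^{p'}}\leq M\eta^{-1}\|R_0\|_{L^1}^{1/p'}$, while a second independent parameter $\delta$ controls $\|R_1\|_{L^1}\leq\delta$ and $\|u_1-u_0\|_{W^{1,\tilde p}}\leq\delta$. Setting $\delta_q=2^{-q}$ and feeding $\|R_q\|\lesssim\delta_{q+1}$ into the next step makes both $\sum M\eta\,\delta_q^{1/p}$ and $\sum M\eta^{-1}\delta_q^{1/p'}$ geometric; $\eta$ is then chosen once, at the very end, to make the total $L^p$ deviation below $\e$. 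You should therefore restate your Main Proposition with two free parameters and rework the inductive lemma accordingly; the remainder of your argument then goes through.
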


Our theorem has the following immediate consequences.
\begin{corollary}[Non-uniqueness]
\label{cor:nonuniq}
Assume \eqref{eq:p:ineq}. Let $\bar \rho \in C^\infty(\T^d)$ with $\int_{\T^d}\bar\rho\,dx=0$. Then there exist
\begin{equation*}
\rho \in C\big([0,T]; L^p(\T^d)\big), \quad u \in C\big([0,T]; W^{1,\tilde p} (\T^d)\cap L^{p'} (\T^d))\big)
\end{equation*}
such that $(\rho,u)$ is a weak solution to \eqref{e:transport}, \eqref{e:incompressible}, and $\rho \equiv 0$ at $t=0$, $\rho \equiv \bar \rho$ at $t=T$.
\end{corollary}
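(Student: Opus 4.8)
The plan is to obtain the corollary directly from Theorem~\ref{thm:main} by feeding it a suitably chosen smooth reference density. First I would fix a cutoff $\chi \in C^\infty([0,T])$ with $\chi(0) = 0$ and $\chi(T) = 1$, and set $\tilde\rho(t,x) := \chi(t)\,\bar\rho(x)$, which is smooth on $[0,T]\times\T^d$. Since $\int_{\T^d}\bar\rho\,dx = 0$, we have $\int_{\T^d}\tilde\rho(t,x)\,dx = \chi(t)\int_{\T^d}\bar\rho\,dx = 0$ for every $t\in[0,T]$; in particular the spatial mean of $\tilde\rho$ is constant in time, so $\tilde\rho$ satisfies the hypothesis of Theorem~\ref{thm:main} (with $p,\tilde p$ as in the present statement, for which \eqref{eq:p:ineq} is assumed).

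Next I would apply Theorem~\ref{thm:main} with reference density $\tilde\rho$ and an arbitrary value of $\e$, say $\e=1$. This yields $\rho \in C([0,T];L^p(\T^d))$ and $u \in C([0,T];W^{1,\tilde p}(\T^d)\cap L^{p'}(\T^d))$ which form a weak solution of \eqref{e:transport} and \eqref{e:incompressible} (by property~(b)), with $\rho(0,\cdot) = \tilde\rho(0,\cdot) = 0$ and $\rho(T,\cdot) = \tilde\rho(T,\cdot) = \bar\rho$ (by property~(c)). These are precisely the assertions of the corollary; the closeness bound~(d) plays no role here.

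To display the non-uniqueness explicitly, I would finally observe that, because $u$ is incompressible, the identically zero density $\rho \equiv 0$ is also a weak solution of \eqref{e:transport}--\eqref{e:incompressible} with the same vector field $u$ and the same initial datum $\rho^0 = 0$; since $\rho(T,\cdot) = \bar\rho \not\equiv 0$, the Cauchy problem \eqref{e:transport}--\eqref{e:initial:cond} for this $u$ has (at least) two distinct weak solutions in $C([0,T];L^p(\T^d))$. There is no substantive obstacle in this deduction: the entire difficulty is contained in Theorem~\ref{thm:main}, which we take as given, and the only thing to verify is the elementary mean-value bookkeeping for $\tilde\rho$.
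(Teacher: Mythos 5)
Your proof is correct and follows essentially the same route as the paper: apply Theorem~\ref{thm:main} to $\tilde\rho(t,x) = \chi(t)\bar\rho(x)$, checking the constant-mean hypothesis and reading off the endpoint values from conclusion~(c). The paper's cutoff is taken to be identically $0$ near $t=0$ and identically $1$ near $t=T$, but as you observe only the values $\chi(0)=0$, $\chi(T)=1$ are needed, so this is an immaterial difference; your closing remark that $\rho\equiv 0$ gives a second solution with the same $u$ and initial datum is a useful clarification, though not required by the statement.
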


\begin{proof}
Let $\chi: [0,T] \to \R$ such that $\chi \equiv 0$ on $[0,T/4]$, $\chi \equiv 1$ on $[3T/4, T]$. Apply Theorem \ref{thm:main} with $\bar \rho(t,x) := \chi(t) \bar \rho(x)$.
\end{proof}

\begin{corollary}[Non-renormalized solution]
\label{cor:nonrem}
Assume \eqref{eq:p:ineq}. 
Then there exist
\begin{equation*}
\rho \in C\big([0,T]; L^p(\T^d)\big), \quad u \in C\big([0,T]; W^{1,\tilde p} (\T^d)\cap L^{p'} (\T^d))\big)
\end{equation*}
such that $(\rho,u)$ is a weak solution to \eqref{e:transport}, \eqref{e:incompressible}, and $\|\rho(t)\|_{L^p(\T^d)}$ is not constant in time.
\end{corollary}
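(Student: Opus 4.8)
The plan is to obtain this as an immediate consequence of Corollary \ref{cor:nonuniq}. First I would fix a concrete smooth mean-zero density on the torus that is not identically zero, say $\bar\rho(x) = \sin(2\pi x_1)$, so that $\int_{\T^d}\bar\rho\,dx = 0$ and $\|\bar\rho\|_{L^p(\T^d)} > 0$. Applying Corollary \ref{cor:nonuniq} with this $\bar\rho$ produces a pair $(\rho, u)$ with $\rho \in C([0,T];L^p(\T^d))$ and $u \in C([0,T]; W^{1,\tilde p}(\T^d) \cap L^{p'}(\T^d))$, which is a weak solution of \eqref{e:transport}, \eqref{e:incompressible}, and which satisfies $\rho \equiv 0$ at $t = 0$ and $\rho \equiv \bar\rho$ at $t = T$.

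The conclusion is then just a comparison of two numbers: $\|\rho(0,\cdot)\|_{L^p(\T^d)} = 0$ whereas $\|\rho(T,\cdot)\|_{L^p(\T^d)} = \|\bar\rho\|_{L^p(\T^d)} > 0$, so the map $t \mapsto \|\rho(t)\|_{L^p(\T^d)}$ is not constant. I would close by recording why this deserves its name: by Theorem \ref{thm:di:perna:lions} the renormalized solution with a given initial datum has constant-in-time $L^p$ norm (choose $\beta$ a smooth bounded approximation of $\tau \mapsto |\tau|^p$ in \eqref{e:renormalized}), hence the solution just constructed, whose $L^p$ norm jumps from $0$ to $\|\bar\rho\|_{L^p(\T^d)}$, cannot be renormalized.

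There is essentially no obstacle here: the entire analytic difficulty is already packaged in Theorem \ref{thm:main}, and hence in Corollary \ref{cor:nonuniq}, so the only thing to verify is that the hypothesis of Corollary \ref{cor:nonuniq} — namely $\int_{\T^d}\bar\rho\,dx = 0$ — is compatible with the datum we want, which it is by construction. One could equally well invoke Theorem \ref{thm:main} directly, with $\bar\rho(t,x) := \chi(t)\sin(2\pi x_1)$ for a cutoff $\chi$ satisfying $\chi(0) = 0$ and $\chi(T) = 1$; the zero-mean-in-space condition then holds for every $t$ automatically, so the compatibility assumption of Theorem \ref{thm:main} is met, and again $\|\rho(0,\cdot)\|_{L^p(\T^d)} = 0 \neq \|\rho(T,\cdot)\|_{L^p(\T^d)}$.
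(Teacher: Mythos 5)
Your proposal is correct. The paper's own proof applies Theorem \ref{thm:main} directly to a time-dependent $\bar\rho(t,x)$ whose spatial mean is constant in time but whose $L^p$ norm is not, and then chooses $\e$ to be one quarter of the maximal oscillation of $\|\bar\rho(t)\|_{L^p}$, so that part~(d) of the theorem (the $\e$-closeness at all times) forces $\|\rho(t)\|_{L^p}$ to be non-constant. Your route is marginally cleaner: by passing through Corollary~\ref{cor:nonuniq} (itself Theorem~\ref{thm:main} applied to $\bar\rho(t,x)=\chi(t)\bar\rho(x)$) you invoke only part~(c), the endpoint coincidence, and compare the exact values $\|\rho(0)\|_{L^p}=0$ and $\|\rho(T)\|_{L^p}=\|\bar\rho\|_{L^p}>0$, with no need for the $\e$-closeness estimate or for choosing $\e$ at all. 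Both are one-line corollaries of Theorem~\ref{thm:main}; the difference is which conclusion of that theorem, (c) or (d), carries the argument. Your closing remark, that a renormalized solution would have constant $L^p$ norm and hence the constructed $\rho$ cannot be renormalized, is correct and matches the observation the paper makes in the discussion after Theorem~\ref{thm:di:perna:lions}.
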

\begin{proof}
Take a smooth map $\bar \rho(t,x)$ such that its spatial mean value is constant in time, but its $L^p$ norm is not constant in time. Apply Theorem \ref{thm:main} with such $\bar \rho$ and
\begin{equation*}
\e := \frac{1}{4} \max_{t,s} \bigg| \|\rho(t)\|_{L^p(\T^d)} - \|\rho(s)\|_{L^p(\T^d)} \bigg|. 
\end{equation*}
\end{proof}

\begin{remark}
\label{r:main:thm}
We list some remarks about the statement of the theorem.
\begin{enumerate}[1.]
\item Condition \eqref{eq:p:ineq} implies that $d \geq 3$. In fact it is not clear if a similar statement could hold for $d=2$ - see for instance \cite{Alberti:2014dl} for the case of autonomous vector fields.
\item Our theorem shows the optimality of the condition of DiPerna-Lions in \eqref{e:DP}, at least for sufficiently high dimension $d\geq 3$. 
\item The requirement that $\bar \rho$ has constant (in time) spatial mean value is necessary because weak solutions to \eqref{e:transport}, \eqref{e:incompressible} preserve the spatial mean.
\item The condition \eqref{eq:p:ineq} implies that the $L^{p'}$-integrability of the velocity $u$ does not follow from the Sobolev embedding theorem.
\item We expect that the statement of Theorem \ref{thm:main} remains valid if \eqref{eq:p:ineq} is replaced by 
\begin{equation}\label{e:p:ineq:sharp}
	\frac{1}{p}+\frac{1}{\tilde p}>1+\frac{1}{d}.
\end{equation}
It would be interesting to see if this condition is sharp in the sense that uniqueness holds provided 
\begin{equation*}
\frac{1}{p} + \frac{1}{\tilde p} \leq 1 + \frac{1}{d}\,.
\end{equation*}
In this regard we note that \eqref{e:p:ineq:sharp} implies $\tilde p<d$. Conversely, if $u\in W^{1,\tilde p}$ with $\tilde p>d$, the Sobolev embedding implies that $u$ is continuous so that the uniqueness statement in \cite{Caravenna:2016kg} applies.
\item The given function $\bar \rho$ could be less regular than $C^\infty$, but we are not interested in following this direction here.
\item It can be shown that the dependence of $\rho,u$ on time is actually $C^\infty_t$, not just continuous, since we treat time just as a parameter. 
\end{enumerate}
\end{remark}

Inspired by the heuristic described above, the proof of our theorem is based on the construction of densities $\rho$ and vector fields $u$ so that $\rho$ is, in some sense, concentrated on the ``bad'' set of $u$, provided \eqref{eq:p:ineq} holds. To construct such densities and fields, we treat the linear transport equation \eqref{e:transport} as a non-linear PDE, whose unknowns are both $\rho$ and $u$: this allows us to control the interplay between density and field. More precisely, we must deal with two opposite needs: on one side, to produce ``anomalous'' solutions, we need to highly concentrate $\rho$ and $u$; on the other side, too highly concentrated functions fail to be Sobolev or even $L^p$-integrable. The balance between these two needs is expressed by \eqref{eq:p:ineq}. 

It is therefore possible to guess that, under a more restrictive assumption than \eqref{eq:p:ineq}, one could  produce anomalous solutions enjoying much more regularity than just $\rho \in L^p$ and $u \in W^{1, \tilde p}$. Indeed, we can produce anomalous solutions as regular as we like, as shown in the next theorem, where \eqref{eq:p:ineq} is replaced by \eqref{eq:strong:p:ineq}. 

\begin{theorem}
\label{thm:strong}
Let $\e>0$, $\bar \rho \in C^\infty([0,T] \times \T^d)$, with
\begin{equation*}
\int_{\T^d} \bar \rho(0,x) dx = \int_{\T^d} \bar \rho(t,x) dx \text{ for every } t \in [0,T].
\end{equation*} 
Let $p, \tilde p \in [1, \infty)$ and $m, \tilde m \in \N$ such that
\begin{equation}
\label{eq:strong:p:ineq}
\frac{1}{p} + \frac{1}{\tilde p} > 1 + \frac{m + \tilde m}{d-1}.
\end{equation}
Then there exist $\rho : [0,T] \times \T^d \to \R$, $u:[0,T] \times \T^d \to \R^d$ such that
\begin{enumerate}[(a)]
\item $\rho \in C([0,T], W^{m,p} (\T^d))$, $u \in C([0,T]; W^{\tilde m,\tilde p} (\T^d))$, $\rho u \in C([0,1]; L^1 (\T^d))$;
\item $(\rho, u)$ is a weak solution to \eqref{e:transport}, \eqref{e:incompressible};
\item at initial and final time $\rho$ coincides with $\bar \rho$, 
i.e. 
\begin{equation*}
\rho(0,\cdot) = \bar \rho(0, \cdot),  \quad \rho(T, \cdot) = \bar \rho(T, \cdot);
\end{equation*}
\item $\rho$ is $\e$-close to $\bar \rho$ i.e.
\begin{equation*}
\begin{split}
\sup_{t \in [0,T]} \big\|\rho(t,\cdot) - \bar \rho(t, \cdot)\big\|_{W^{m,p}(\T^d)} 		& \leq \e. \\
\end{split}
\end{equation*}

\end{enumerate}
\end{theorem}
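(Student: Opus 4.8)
The plan is to prove Theorem~\ref{thm:strong} by the convex integration scheme used for Theorem~\ref{thm:main}, carefully accounting for the cost of spatial derivatives. Using the equivalent conservative form \eqref{eq:continuity}, I would construct a sequence $(\rho_q,u_q,R_q)_{q\in\N}$, smooth in $t$ and $x$, solving the \emph{continuity--defect equation}
\begin{equation*}
\partial_t\rho_q+\div(\rho_qu_q)=\div R_q,\qquad \div u_q=0,\qquad \int_{\T^d}\rho_q(t,\cdot)\,dx=\int_{\T^d}\bar\rho(0,\cdot)\,dx,
\end{equation*}
where $R_q\colon[0,T]\times\T^d\to\R^d$ is a vector-valued error, and which satisfies, for a fixed summable sequence $\delta_q\downarrow0$: $\|R_q\|_{C_tL^1}\le\delta_{q+1}$; the increment bounds $\|\rho_{q+1}-\rho_q\|_{C_tW^{m,p}}\le\delta_{q+1}$, $\|u_{q+1}-u_q\|_{C_tW^{\tilde m,\tilde p}}\le\delta_{q+1}$, together with $\|\rho_{q+1}u_{q+1}-\rho_qu_q\|_{C_tL^1}\le\delta_{q+1}$; and $\rho_q=\bar\rho$ at $t=0,T$ with $R_q\equiv0$ near those times. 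One starts from $u_0\equiv0$ and $\rho_0$ a small time-reparametrisation of $\bar\rho$, chosen so that $\rho_0=\bar\rho$ at $t=0,T$, so that $\|\rho_0-\bar\rho\|_{C_tW^{m,p}}$ is as small as desired, and so that $R_0:=\nabla\Delta^{-1}\partial_t\rho_0$ (admissible since $\partial_t\rho_0$ has zero spatial mean) vanishes near $t=0,T$. Granting such a sequence, $\rho_q\to\rho$ in $C_tW^{m,p}$, $u_q\to u$ in $C_tW^{\tilde m,\tilde p}$ and $\rho_qu_q\to\rho u$ in $C_tL^1$; since $\div R_q\to0$ in the sense of distributions, passing to the limit in the weak formulation \eqref{eq:weak:sln} (now meaningful because $\rho u\in C_tL^1$) yields a weak solution of \eqref{e:transport}, \eqref{e:incompressible} satisfying (a)--(c), while arranging $\sum_q\delta_q\le\e$ gives (d).

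The perturbations are built from \emph{Mikado densities and fields} concentrated on thin tubes. Fix a concentration parameter $\mu\gg1$ and an oscillation parameter $\lambda$ with $\mu\ge\lambda\gg1$, and a finite family of rational directions $k_1,\dots,k_N\in\Z^d\setminus\{0\}$ whose normalisations positively span $\R^d$. For each $i$ one takes $1$-periodic profiles $\varrho_{k_i}\colon\T^d\to\R$ and $W_{k_i}\colon\T^d\to\R^d$ supported in a $\mu^{-1}$-neighbourhood of the line $\R k_i$ --- a set of codimension $d-1$, hence of measure $\sim\mu^{-(d-1)}$ --- with the algebraic properties $\div W_{k_i}=0$, $\div(\varrho_{k_i}W_{k_i})=0$, $\int_{\T^d}\varrho_{k_i}=\int_{\T^d}W_{k_i}=0$, $\int_{\T^d}\varrho_{k_i}W_{k_i}\,dx=k_i/|k_i|$, and mutually disjoint supports. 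Normalising the product to have unit mean forces the amplitudes of $\varrho_{k_i}$ and $W_{k_i}$ to scale like $\mu^{\,a}$ and $\mu^{\,d-1-a}$ for a free exponent $a$, so that for every $s\ge0$ and $r\in[1,\infty)$,
\begin{equation*}
\|\varrho_{k_i}\|_{W^{s,r}}\lesssim(\lambda\mu)^{s}\,\mu^{\,a-\frac{d-1}{r}},\qquad \|W_{k_i}\|_{W^{s,r}}\lesssim(\lambda\mu)^{s}\,\mu^{\,d-1-a-\frac{d-1}{r}}.
\end{equation*}

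At the inductive step one mollifies $(\rho_q,u_q,R_q)$ at a small scale $\ell$ (killing frequencies above $\ell^{-1}$, with $\ell^{-1}\ll\lambda$) to $(\bar\rho_q,\bar u_q,\bar R_q)$, writes $\bar R_q=\sum_i a_i(t,x)\,k_i/|k_i|$ via a partition of unity on the ball of radius $\|\bar R_q\|_{C^0}$ with $a_i\ge0$ smooth, and sets
\begin{equation*}
\rho_{q+1}-\rho_q=\sum_i a_i^{1/2}\,\varrho_{k_i}(\lambda x)+\theta_c,\qquad u_{q+1}-u_q=\sum_i a_i^{1/2}\,W_{k_i}(\lambda x)+w_c,
\end{equation*}
where $\theta_c,w_c$ are lower-order correctors (a divergence fix ensuring $\div u_{q+1}=0$, using $\div W_{k_i}=0$ together with an antidivergence operator that gains a factor $\lambda^{-1}$, plus a mean-zero fix so that the increments integrate to $0$). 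The identities $\int\varrho_{k_i}W_{k_i}=k_i/|k_i|$, $\div(\varrho_{k_i}W_{k_i})=0$ and the disjointness of supports guarantee that the low-frequency part of $\div\bigl((\rho_{q+1}-\rho_q)(u_{q+1}-u_q)\bigr)$ equals $\div\bar R_q$ up to a mollification error; hence $R_{q+1}$ is the antidivergence of a sum of a high-frequency \emph{oscillation error}, a \emph{transport error} from $\partial_t(\rho_{q+1}-\rho_q)+\div\bigl((\rho_{q+1}-\rho_q)\bar u_q+\bar\rho_q(u_{q+1}-u_q)\bigr)$, and a \emph{mollification error} of size $O(\ell)$. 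Each of these, together with the mixed products $\vartheta_{q+1}u_q$ and $\rho_q w_{q+1}$, is estimated in $C_tL^1$ using the scaling above and the antidivergence gain, and is made $\le\delta_{q+2}$ by choosing, in order, $\mu$, then $\lambda$, then $\ell$; the boundary conditions persist because $\bar R_q$, hence all $a_i$ and hence the perturbations, vanish near $t=0,T$.

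The crux --- and the only point where $d,p,\tilde p,m,\tilde m$ genuinely enter --- is the simultaneous control of $\rho_{q+1}-\rho_q$ in $W^{m,p}$ and of $u_{q+1}-u_q$ in $W^{\tilde m,\tilde p}$. Absorbing the $(\lambda\mu)^{s}$ factors into the later choice of $\lambda$ (at the cost of a larger oscillation error, still beaten by one more antidivergence gain), the scaling gives $\|\rho_{q+1}-\rho_q\|_{W^{m,p}}\lesssim\|\bar R_q\|_{C^0}^{1/2}\,\mu^{\,m+a-\frac{d-1}{p}}$ and $\|u_{q+1}-u_q\|_{W^{\tilde m,\tilde p}}\lesssim\|\bar R_q\|_{C^0}^{1/2}\,\mu^{\,\tilde m+d-1-a-\frac{d-1}{\tilde p}}$, so both exponents of $\mu$ can be made strictly negative by some admissible $a$ precisely when
\begin{equation*}
m+\tilde m+(d-1)-\frac{d-1}{p}-\frac{d-1}{\tilde p}<0,
\end{equation*}
i.e.\ when \eqref{eq:strong:p:ineq} holds; equivalently, the window $\tilde m+\tfrac{d-1}{\tilde p'}<a<\tfrac{d-1}{p}-m$ is nonempty. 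Picking such an $a$ and then taking $\mu$ large makes each increment as small as we wish: this both drives the iteration for $q\ge1$ and, already at the first step, lets us absorb the order-one defect $R_0$ while keeping $\sum_q\delta_q\le\e$, so (d) holds for the prescribed $\e$. The main obstacle is thus entirely this balancing of concentration against derivative cost; once it is resolved, the remaining ingredients --- the explicit construction of the profiles $\varrho_{k_i},W_{k_i}$ with disjoint supports, the stationary-phase and antidivergence estimates for the oscillation and mixed-product errors, and the bookkeeping of the geometric rates $\delta_q,\mu_q,\lambda_q,\ell_q$ --- are carried out exactly as in Theorem~\ref{thm:main}, which is the case $m=0$, $\tilde m=1$ where the strictness of \eqref{eq:p:ineq} additionally leaves room to choose $a$ near the endpoint $\tfrac{d-1}{p}$ so that also $u\in C_tL^{p'}$, cf.\ Remark~\ref{r:main:thm}.
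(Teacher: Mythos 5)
Your proposal adopts the same overall strategy as the paper: an iterative convex-integration scheme for the continuity--defect equation built from Mikado-type building blocks depending on an oscillation parameter $\lambda$ and a concentration parameter $\mu$, with the freedom to tune the exponent $a$ (and $b=d-1-a$) controlling the amplitude split between density and velocity. You correctly identify that the crux is the window
$$\tilde m + \tfrac{d-1}{\tilde p'} < a < \tfrac{d-1}{p} - m,$$
whose nonemptiness is exactly \eqref{eq:strong:p:ineq}; this matches the paper's condition \eqref{eq:max:min}. A few implementation choices, however, differ from the paper's. The paper uses pipes along the \emph{coordinate} axes $e_1,\dots,e_d$ and decomposes the defect componentwise $R_0=\sum_j R_{0,j}\,e_j$, which is much simpler for this linear problem than your choice of rational directions with a partition of unity in stress space. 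The paper splits the amplitude \emph{asymmetrically} as $|R_{0,j}|^{1/s}$ on the density side and $|R_{0,j}|^{1/s'}$ on the velocity side (with $s=(d-1)/a$, $s'=(d-1)/b$), so that the improved H\"older inequality of Lemma~\ref{l:improved:holder} yields $\|\rho_1-\rho_0\|_{L^s}\lesssim\eta\,\|R_0\|_{L^1}^{1/s}$, whereas you use the symmetric split $a_i^{1/2}$ in both factors; for the $W^{m,p}$ and $W^{\tilde m,\tilde p}$ estimates (which carry a strictly negative power of $\mu$) this distinction is immaterial, and both routes close the iteration, but the asymmetric split is what gives the sharper $L^s/L^{s'}$ bounds stated in Proposition~\ref{p:main:strong}. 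Finally, you mollify $(\rho_q,u_q,R_q)$ at scale $\ell$ and carry an $O(\ell)$ mollification error; the paper avoids this entirely by working with a fixed smooth triple at each stage and using the cutoffs $\chi_j$ of Section~\ref{ss:def:pert} to smooth the amplitudes $|R_{0,j}|^{1/s}$. None of this is fatal, but it imports more machinery than the problem requires.

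One genuine point to correct: the ordering ``choose $\mu$, then $\lambda$, then $\ell$'' does not work as stated. Each spatial derivative of $\varrho_{k_i}(\lambda x)$ costs a factor $\lambda\mu$, so $m$ derivatives give $(\lambda\mu)^m\mu^{a-\frac{d-1}{p}}$; if $\mu$ is fixed first and $\lambda$ is subsequently taken large (as you need for the antidivergence gain), this quantity blows up, not shrinks. The paper couples the parameters as $\mu=\lambda^c$ for a fixed sufficiently large $c$, so that $\lambda^m\mu^{m+a-\frac{d-1}{p}}=\lambda^{\,m+c(m+a-\frac{d-1}{p})}\to0$ as $\lambda\to\infty$ precisely because $m+a-\frac{d-1}{p}<0$. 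Your displayed bound $\|\rho_{q+1}-\rho_q\|_{W^{m,p}}\lesssim\|\bar R_q\|_{C^0}^{1/2}\mu^{\,m+a-\frac{d-1}{p}}$ silently drops the $\lambda^m$ factor, which is only legitimate under that coupling; the sentence about ``absorbing the $(\lambda\mu)^s$ factors into the later choice of $\lambda$'' should be replaced by this $\mu=\lambda^c$ scaling, after which the argument is sound.
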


\begin{remark}
The analogues of Corollaries \ref{cor:nonuniq} and \ref{cor:nonrem} continue to hold in Theorems \ref{thm:strong}. Observe also that \eqref{eq:strong:p:ineq} reduces to \eqref{eq:p:ineq} if we choose $m=0$ and $\tilde m =1$. 

\end{remark}

\begin{remark}
\label{rmk:ss'}
Contrary to Theorem \ref{thm:main}, here we do not show that $u \in C([0,T], L^{p'}(\T^d))$. Here we prove that $\rho u \in C([0,T], L^1(\T^d))$ by showing that $\rho \in C([0,T]; L^s(\T^d))$ and $u \in C([0,T]; L^{s'}(\T^d))$ for some suitably chosen $s,s' \in (1, \infty)$. This is also the reason why in Theorem \ref{thm:strong} we allow the case $p = 1$. Indeed, Theorem \ref{thm:main}, for any given $p$, produces a vector field $u \in C_t L^{p'}_x$; on the contrary, Theorem \ref{thm:strong} just produces a field $u \in C_t L^{s'}_x$, for some $s' < p'$.

\end{remark}

\subsection{Extension to the transport-diffusion equation}
\label{ss:transport-diffusion}

The mechanism of concentrating the density in the same set where the field is concentrated, used to construct anomalous solutions to the transport equation, can be used as well to prove non-uniqueness for the transport-diffusion equation \eqref{eq:transport-diffusion}. 

The diffusion term $\Delta \rho$ ``dissipates the energy'' and therefore, heuristically, it helps for uniqueness. Non-uniqueness can thus be caused only by the transport term $\div (\rho u)= u \cdot \nabla \rho$. Therefore, as a general principle, whenever a uniqueness result is available for the transport equation, the same result applies to the transport-diffusion equation (see, for instance, \cite{LeBris2004}, \cite{Levy:2016tl} and \cite{Crippa:2015er}). Moreover, the diffusion term $\Delta \rho$ is so strong that minimal assumptions on $u$ are enough to have uniqueness: this is the case, for instance, if $u$ is just bounded, or even $u \in L^r_t L^q_x$, with $2/r + d/q \leq 1$ (see \cite{Ladyzenskaja:1968} and also \cite{Bianchini2017}, where this relation between $r,q,d$ is proven to be sharp). Essentially, in this regime the transport term can be treated as a lower order perturbation of the heat equation. 

On the other hand, the technique we use to prove non-uniqueness for the transport equation allows us to construct densities and fields, whose concentrations are so high that the transport term ``wins'' over the diffusion one and produces anomalous solutions to \eqref{eq:transport-diffusion} as well. Roughly speaking, we have to compare $\div (\rho u)$ with $\Delta \rho = \div(\nabla \rho)$, or, equivalently, $\rho u$ with $\nabla \rho$, for instance in the $L^1$ norm. The way we construct concentration of $\rho$ and $u$ can be arranged, under a more restrictive assumption than \eqref{eq:p:ineq}, so that
\begin{equation*}
\|\rho u\|_{L^1} \approx 1, \qquad \|\nabla \rho\|_{L^1} \ll 1
\end{equation*}
(see the last inequality in \eqref{eq:mikado:est:1} and \eqref{eq:mikado:est:2:d}) and thus the transport term is ``much larger'' than the diffusion one. The precise statement is as follows.

\begin{theorem}\label{thm:diffusion}
Let $\e>0$, $\bar \rho \in C^\infty([0,T] \times \T^d)$, with
\begin{equation*}
\int_{\T^d} \bar \rho(0,x) dx = \int_{\T^d} \bar \rho(t,x) dx \text{ for every } t \in [0,T].
\end{equation*} 
Let $p \in (1, \infty)$, $\tilde p \in [1, \infty)$ such that
\begin{equation}
\label{eq:p:ineq:diffu}
\frac{1}{p} + \frac{1}{\tilde p} > 1 + \frac{1}{d-1},\quad
p' < d-1 \,. 
\end{equation}
Then there exist $\rho : [0,T] \times \T^d \to \R$, $u:[0,T] \times \T^d \to \R^d$ such that
\begin{enumerate}[(a)]
\item $\rho \in C\bigl([0,T]; L^p (\T^d)\bigr)$, $u \in C\bigl([0,T]; W^{1,\tilde p} (\T^d)\cap L^{p'} (\T^d)\bigr)$;
\item $(\rho, u)$ is a weak solution to \eqref{eq:transport-diffusion} and \eqref{e:incompressible};
\item at initial and final time $\rho$ coincides with $\bar \rho$, 
i.e. 
\begin{equation*}
\rho(0,\cdot) = \bar \rho(0, \cdot),  \quad \rho(T, \cdot) = \bar \rho(T, \cdot);
\end{equation*}
\item $\rho$ is $\e$-close to $\bar \rho$ i.e.
\begin{equation*}
\begin{split}
\sup_{t \in [0,T]} \big\|\rho(t,\cdot) - \bar \rho(t, \cdot)\big\|_{L^p(\T^d)} & \leq \e.
\end{split}
\end{equation*}
\end{enumerate}

\end{theorem}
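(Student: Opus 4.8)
The plan is to run the very same convex-integration scheme that proves Theorem \ref{thm:main}, but to build, at each stage of the iteration, an approximate solution of the transport-\emph{diffusion} equation rather than of the transport equation, and then to check that the additional Laplacian term is an admissible lower-order perturbation precisely when the second condition in \eqref{eq:p:ineq:diffu} holds. In other words, the diffusion operator $\Delta$ will be treated exactly like the other ``error'' terms produced along the iteration: written as $\div$ of something and absorbed into the defect, the only cost being a strengthening of the range of admissible exponents.

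Concretely, I would set up a \emph{transport-diffusion-defect equation}: a triple $(\rho_q,u_q,R_q)$, with $u_q$ smooth and divergence-free, $\rho_q$ smooth, such that
\[
\partial_t\rho_q+\div(\rho_q u_q)-\Delta\rho_q=\div R_q ,
\]
together with the same inductive estimates as in the proof of Theorem \ref{thm:main} on $\|\rho_q-\rho_{q-1}\|_{C_tL^p_x}$, on $\|u_q-u_{q-1}\|_{C_t(W^{1,\tilde p}_x\cap L^{p'}_x)}$ and on $\|R_q\|_{C_tL^1_x}$, arranged so that the first two are summable and small (to eventually give (d) and (a)) while $\|R_q\|_{C_tL^1_x}\to 0$. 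The base step is identical to the one in Theorem \ref{thm:main}: start from a mollification of $\bar\rho$, $u_0=0$ and $R_0$ solving the equation; since all perturbations will be supported in time away from $\{0,T\}$, property (c) is automatic. Once the iteration converges, $\rho:=\lim_q\rho_q\in C_tL^p_x$, $u:=\lim_q u_q\in C_t(W^{1,\tilde p}_x\cap L^{p'}_x)$, so that $\rho u\in C_tL^1_x$, and letting $q\to\infty$ in the defect equation yields $\int_0^T\!\!\int_{\T^d}\rho[\partial_t\varphi+u\cdot\nabla\varphi+\Delta\varphi]\,dx\,dt=0$ for all $\varphi\in C^\infty_c((0,T)\times\T^d)$, i.e. (b); properties (c), (d) follow as before.

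The only genuinely new point is the inductive step. Exactly as in Theorem \ref{thm:main} one writes $\rho_{q+1}=\rho_q+\vartheta_{q+1}$, $u_{q+1}=u_q+w_{q+1}$, with principal parts plus correctors built from the Mikado density/field building blocks concentrated on thin tubes, with oscillation parameter $\lambda_{q+1}$ and concentration parameter $\mu_{q+1}$, chosen so that the principal interaction $\div(\vartheta^{\mathrm{pr}}_{q+1}w^{\mathrm{pr}}_{q+1})$ cancels $-\div R_q$ up to lower-order terms. Inserting $(\rho_{q+1},u_{q+1})$ into the transport-diffusion-defect equation produces, besides all the terms already controlled in the proof of Theorem \ref{thm:main} (time derivative of the slow amplitudes, oscillation and interaction errors, corrector errors, each placed into $R_{q+1}$ via an antidivergence), \emph{exactly one} extra contribution, namely $-\Delta\vartheta_{q+1}=\div(-\nabla\vartheta_{q+1})$. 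This is absorbed into $R_{q+1}$ by adding $-\nabla\vartheta_{q+1}$ to it, and the point is to check that $\|\nabla\vartheta_{q+1}\|_{C_tL^1_x}$ stays below the (geometrically small) threshold imposed on $\|R_{q+1}\|_{C_tL^1_x}$. This is precisely where $p'<d-1$ enters: it is what makes the last inequality in \eqref{eq:mikado:est:1} (and, for the corrector, in \eqref{eq:mikado:est:2:d}) usable, i.e. what allows the parameters to be chosen so that $\|\nabla\vartheta^{\mathrm{pr}}_{q+1}\|_{L^1}\ll 1\approx\|\vartheta^{\mathrm{pr}}_{q+1}w^{\mathrm{pr}}_{q+1}\|_{L^1}$; the gradient of the density building block contributes a negative power of the concentration $\mu_{q+1}$ exactly under the condition $p'<d-1$, so that it can be driven to $0$ by taking $\mu_{q+1}$ large, together with a compatible (large) choice of $\lambda_{q+1}$.

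I expect the main obstacle to be this last compatibility check: one has to show that the three families of constraints on $(\lambda_q,\mu_q)$ — those guaranteeing $\rho_q\in L^p$ and $u_q\in W^{1,\tilde p}\cap L^{p'}$ (which, as in Theorem \ref{thm:main}, force the first inequality of \eqref{eq:p:ineq:diffu}), those making the classical oscillation and interaction errors small, and the new one making $\|\nabla\vartheta_{q+1}\|_{L^1}$ small (which forces $p'<d-1$) — admit a simultaneous solution; under \eqref{eq:p:ineq:diffu} they do. Apart from this strengthening of the admissible parameter range, the diffusion term requires no change to the structure of the scheme, and every remaining estimate is taken verbatim from the proof of Theorem \ref{thm:main}.
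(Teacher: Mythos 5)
Your proposal matches the paper's proof. The paper proves Theorem \ref{thm:diffusion} by exactly the same route: run the iteration of Proposition \ref{p:main} with the defect equation replaced by \eqref{eq:cont:diffu}, keep the same Mikado blocks with the same exponents $a,b$, observe that the only new contribution to the defect is $\nabla\vartheta$ (since $\vartheta_c$ is spatially constant and $\Delta\rho_0$ is already in the old equation), and absorb it into $R^{\rm linear}$; the second inequality $p'<d-1$ is precisely the condition $\gamma_4=(d-1)/p'-1>0$ that makes $\|\nabla\Theta_\mu^j\|_{L^1}\lesssim\mu^{-\gamma}$ decay, which together with $\mu=\lambda^c$, $c>1/\gamma$, kills the extra $\lambda\mu^{-\gamma}$ arising from the chain rule.
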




As for the transport equation, also for \eqref{eq:transport-diffusion} we can generalize Theorem \ref{thm:diffusion}, to get densities and fields with arbitrary large regularity. Moreover, we can cover also the case of diffusion operators of arbitrary large order:
\begin{equation}
\label{eq:transport:diffusion:higher}
\partial_t \rho + \div(\rho u) = L \rho,
\end{equation} 
where $L$ is a constant coefficient differential operator of order $k \in \N$, $k \geq 2$, not necessarily elliptic. 

\begin{theorem}\label{thm:diffusion:higher}
Let $\e>0$, $\bar \rho \in C^\infty([0,T] \times \T^d)$, with
\begin{equation*}
\int_{\T^d} \bar \rho(0,x) dx = \int_{\T^d} \bar \rho(t,x) dx \text{ for every } t \in [0,T].
\end{equation*} 
Let $p, \tilde p \in [1, \infty)$ and $m, \tilde m \in \N$ such that
\begin{equation}
\label{eq:p:ineq:diffu:2}
\frac{1}{p} + \frac{1}{\tilde p} > 1 + \frac{m+\tilde m}{d-1},\quad 
\tilde p < \frac{d-1}{\tilde m + k -1}\,.
\end{equation}
Then there exist $\rho : [0,T] \times \T^d \to \R$, $u:[0,T] \times \T^d \to \R^d$ such that
\begin{enumerate}[(a)]
\item $\rho \in C\bigl([0,T]; W^{m,p} (\T^d)\bigr)$, $u \in C\bigl([0,T]; W^{\tilde m,\tilde p} (\T^d)\bigr)$, $\rho u \in C([0,1]; L^1(\T^d))$;
\item $(\rho, u)$ is a weak solution to \eqref{eq:transport:diffusion:higher} and \eqref{e:incompressible};
\item at initial and final time $\rho$ coincides with $\bar \rho$, 
i.e. 
\begin{equation*}
\rho(0,\cdot) = \bar \rho(0, \cdot),  \quad \rho(T, \cdot) = \bar \rho(T, \cdot);
\end{equation*}
\item $\rho$ is $\e$-close to $\bar \rho$ i.e.
\begin{equation*}
\begin{split}
\sup_{t \in [0,T]} \big\|\rho(t,\cdot) - \bar \rho(t, \cdot)\big\|_{W^{m,p}(\T^d)} & \leq \e.
\end{split}
\end{equation*}
\end{enumerate}

\end{theorem}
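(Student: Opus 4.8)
The plan is to follow exactly the convex-integration--type scheme used for Theorem \ref{thm:main} (and its higher-regularity refinement Theorem \ref{thm:strong}), and superimpose on it the verification that the diffusion term $L\rho$ is, at every stage, a lower-order contribution to the error. I would build $\rho$ and $u$ as limits of iterates $\rho_q, u_q$ that solve, at step $q$, the transport-diffusion equation with an error: $\partial_t\rho_q + \div(\rho_q u_q) - L\rho_q = \div R_q$ for some vector field $R_q$ (the ``Reynolds stress'' analogue), together with incompressibility $\div u_q = 0$. The perturbations $\rho_{q+1}-\rho_q$ and $u_{q+1}-u_q$ are built from the same \emph{Mikado densities/flows}: highly concentrated, pairwise-disjointly-supported functions oscillating at a large frequency $\lambda_{q+1}$, with amplitude governed by a concentration parameter $\mu_{q+1}$, arranged so that the low-frequency part of the product $(\rho_{q+1}-\rho_q)(u_{q+1}-u_q)$ cancels the old error $R_q$. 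The key quantitative inputs are the estimates referred to in the excerpt as \eqref{eq:mikado:est:1} and \eqref{eq:mikado:est:2:d}, which quantify how $\|\rho_{q+1}-\rho_q\|_{W^{m,p}}$, $\|u_{q+1}-u_q\|_{W^{\tilde m,\tilde p}}$, $\|(\rho_{q+1}-\rho_q)(u_{q+1}-u_q)\|_{L^1}$, and $\|\nabla\rho_{q+1}\|$ (more generally the high-derivative norms) scale in $\lambda_{q+1}$ and $\mu_{q+1}$.

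Concretely, I would first fix the iteration so that $\rho_q\to\rho$ in $C_tW^{m,p}_x$ and $u_q\to u$ in $C_tW^{\tilde m,\tilde p}_x$ and $\rho_qu_q\to\rho u$ in $C_tL^1_x$; this requires summability of $\sum_q\lambda_{q+1}^{m}\mu_{q+1}^{\#}$-type series, which is where the first inequality in \eqref{eq:p:ineq:diffu:2}, namely $\frac1p+\frac1{\tilde p}>1+\frac{m+\tilde m}{d-1}$, enters — exactly as in Theorem \ref{thm:strong}. The genuinely new point is controlling the extra term created by the diffusion operator: when one writes the equation for the new error $R_{q+1}$, besides the usual transport/oscillation/corrector terms one gets a contribution of the form $(L)^{-1}$-type antidivergence applied to $L(\rho_{q+1}-\rho_q)$, i.e. a term of size comparable to $\|L(\rho_{q+1}-\rho_q)\|$ after applying the antidivergence operator $\mathcal R$. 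Since $L$ has order $k$, this costs $\lambda_{q+1}^{k-1}$ derivatives (one derivative is recovered by $\mathcal R=\div^{-1}$) acting on the $W^{\tilde m,\tilde p}$-sized, $\mu_{q+1}$-concentrated density perturbation. I would estimate this term in $L^1$ (the norm in which the error is measured) and show it is $\ll 1$ provided the concentration is strong enough relative to $\lambda_{q+1}^{k-1}$; tracking the exponents, this is precisely the content of the second condition $\tilde p<\frac{d-1}{\tilde m+k-1}$, which guarantees there is enough room in the concentration parameter $\mu_{q+1}$ to beat the $k-1$ extra derivatives while keeping $u$ bounded in $W^{\tilde m,\tilde p}$.

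The main obstacle — and the step I would spend the most care on — is the simultaneous balancing of the three competing constraints at each inductive step: (i) the perturbation of $u$ must stay bounded (indeed summable) in $C_tW^{\tilde m,\tilde p}_x$, which caps the concentration; (ii) the product $(\rho_{q+1}-\rho_q)(u_{q+1}-u_q)$ must have $L^1$-norm of order one (to cancel $R_q$) yet the high-frequency leftover and the corrector errors must be small; and (iii) the diffusion-induced term, of size $\sim \lambda_{q+1}^{k-1}\cdot(\text{derivative cost})\cdot\|\rho_{q+1}-\rho_q\|$, must be small. Writing all norms as monomials $\lambda_{q+1}^{a}\mu_{q+1}^{b}$ and choosing $\mu_{q+1}=\lambda_{q+1}^{\theta}$ for a suitable $\theta\in(0,1)$, these become linear inequalities in the exponents, and one checks that the system is solvable exactly under \eqref{eq:p:ineq:diffu:2}. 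Everything else — the construction of Mikado densities/flows, the antidivergence operator $\mathcal R$ and its mapping properties, the gluing in time to match $\bar\rho$ at $t=0,T$ and the $\e$-closeness, the passage to the limit, and the verification that the limit is a genuine weak solution of \eqref{eq:transport:diffusion:higher} — carries over verbatim (or with only cosmetic changes) from the proofs of Theorems \ref{thm:strong} and \ref{thm:diffusion}, since the diffusion term $L\rho$ is linear and, by the above, asymptotically negligible in the error. I would therefore present the proof as: recall the inductive proposition from the transport case, add the single extra error term coming from $L$, re-derive the exponent bookkeeping, and note that the new admissibility region is cut out by the second inequality in \eqref{eq:p:ineq:diffu:2}.
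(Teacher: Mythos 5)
Your proposal follows the paper's approach essentially verbatim: iterate the diffusion-defect version of the continuity-defect equation with Mikado perturbations, observe that the only new error contribution is $L\vartheta = \div(\tilde L\vartheta)$ with $\tilde L$ of order $k-1$, and control $\|\tilde L\vartheta\|_{L^1}\lesssim\lambda^{k-1}\mu^{-\gamma}$ via the Mikado concentration exponents; the second constraint $\tilde p<(d-1)/(\tilde m+k-1)$ is then exactly the compatibility between the upper bound $a<d-k$ (to beat the $k-1$ derivatives) and the lower bound on $a$ forced by keeping $u$ in $W^{\tilde m,\tilde p}$. One minor slip: the relation $\mu=\lambda^\theta$ requires $\theta>(k-1)/\gamma$ (typically $\gg 1$), not $\theta\in(0,1)$ as you wrote, but this is a bookkeeping detail that does not affect the argument.
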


\begin{remark}
The analogues of Corollaries \ref{cor:nonuniq} and \ref{cor:nonrem} continue to hold in Theorems \ref{thm:diffusion} and \ref{thm:diffusion:higher}. Remark \ref{rmk:ss'} applies also to the statement of 
Theorem \ref{thm:diffusion:higher}. 

Observe also that, if we choose $m=0$, $\tilde m=1$, $k=2$, the first condition in \eqref{eq:p:ineq:diffu} reduces to the first condition in \eqref{eq:p:ineq:diffu:2}, nevertheless \eqref{eq:p:ineq:diffu} is not equivalent to \eqref{eq:p:ineq:diffu:2}. Indeed, \eqref{eq:p:ineq:diffu} implies \eqref{eq:p:ineq:diffu:2}, but the viceversa is not true, in general. This can be explained by the fact that Theorem \ref{thm:diffusion}, for any given $p$,  produces a vector field $u \in C_t L^{p'}_x$, while Theorem \ref{thm:diffusion:higher} just produces a field $u \in C_t L^{s'}_x$ for some $s' < p'$.

\end{remark}

\subsection{Strategy of the proof}
\label{ss:strategy:proof}

Our strategy is based on the technique of convex integration that has been developed in the past years for the incompressible Euler equations in connection with Onsager's conjecture, see \cite{DeLellis:2013im,DeLellis:2012tz,Buckmaster:2013vv,Buckmaster:2014ty,Buckmaster:2014th,Isett:2016to,Buckmaster:2017uz} and in particular inspired by the recent extension of the techniques to weak solutions of the Navier-Stokes equations in \cite{Buckmaster:2017wf}. 
Whilst the techniques that led to progress and eventual resolution of Onsager's conjecture in \cite{Isett:2016to} are suitable for producing examples with H\"older continuous velocity (with small exponent) \cite{Isett:2014uw}, being able to ensure that the velocity is in a Sobolev space $W^{1,\tilde p}$, i.e.~with one full derivative, requires new ideas. 

A similar issue appears when one wants to control the dissipative term $-\Delta u$ in the Navier-Stokes equations. Inspired by the theory of intermittency in hydrodynamic turbulence,  T.~Buckmaster and V.~Vicol in \cite{Buckmaster:2017wf} introduced ``intermittent Beltrami flows'', which are spatially inhomogeneous versions of the classical Beltrami flows used in \cite{DeLellis:2013im,DeLellis:2012tz,Buckmaster:2013vv,Buckmaster:2014ty,Buckmaster:2014th}. In contrast to the homogeneous case, these have different scaling for different $L^q$ norms at the expense of a diffuse Fourier support. In particular, one can ensure small $L^q$ norm for small $q>1$, which in turn leads to control of the dissipative term.

In this paper we introduce concentrations to the convex integration scheme in a different way, closer in spirit to the $\beta$-model, introduced by Frisch, Sulem and Nelkin \cite{Frisch:1978dv,FrischBook} as a simple model for intermittency in turbulent flows. In addition to a large parameter $\lambda$ that controls the frequency of oscillations, we introduce a second large parameter $\mu$ aimed at controlling concentrations. Rather than working in Fourier space, we work entirely in $x$-space and use ``Mikado flows", introduced in \cite{SzekelyhidiJr:2016tp} and used in \cite{Isett:2016to,Buckmaster:2017uz} as the basic building blocks. These building blocks consist of pairwise disjoint (periodic) pipes in which the divergence-free velocity  and, in our case, the density are supported. In particular, our construction only works for dimensions $d\geq 3$. The oscillation parameter $\lambda$ controls the frequency of the periodic arrangement - the pipes are arranged periodically with period $1/\lambda$. The concentration parameter $\mu$ controls the relative (to $1/\lambda$) radius of the pipes and the size of the velocity and density. Thus, for large $\mu$ our building blocks consist of a $1/\lambda$-periodic arrangement of very thin pipes of total volume fraction $1/\mu^{d-1}$ where the velocity and density are concentrated - see Proposition \ref{p:mikado} and Remark \ref{rmk:choice:ab} below.

\smallskip

We prove in details only Theorem \ref{thm:main}, in Sections \ref{s:technical}-\ref{s:proof:prop}. The proofs of Theorems \ref{thm:strong}, \ref{thm:diffusion}, \ref{thm:diffusion:higher} can be obtained from the one of Theorem \ref{thm:main} with minor changes. A sketch is provided in Section \ref{s:sketch}.

\subsection*{Acknowledgement} 
The authors would like to thank Gianluca Crippa for several very useful comments. This research was supported by the ERC Grant Agreement No. 724298.

\section{Technical tools}
\label{s:technical}

We start by fixing some notation:
\begin{itemize}
\item $\T^d = \R^d / \Z^d$ is the $d$-dimensional flat torus. 
\item For $p \in [1,\infty]$ we will always denote by $p'$ its dual exponent.
\item If $f(t,x)$ is a smooth function of $t \in [0,T]$ and $x \in \T^d$, we denote by 
	\begin{itemize}
	\item $\|f\|_{C^k}$ the sup norm of $f$ together with the sup norm of all its derivatives in time and space up to order $k$;
	\item $\|f(t, \cdot)\|_{C^k(\T^d)}$ the sup norm of $f$ together with the sup norm  of all its spatial derivatives up to order $k$ at fixed time $t$;
	\item $\|f(t,\cdot)\|_{L^p(\T^d)}$ the $L^p$ norm of $f$ in the spatial derivatives, at fixed time $t$. Since we will take always $L^p$ norms in the spatial variable (and never in the time variable), we will also use the shorter notation $\|f(t, \cdot)\|_{L^p} = \|f(t)\|_{L^p}$ to denote the $L^p$ norm of $f$ in the spatial variable.
	\end{itemize}
\item $C^\infty_0(\T^d)$ is the set of smooth functions on the torus with zero mean value.
\item $\N = \{0,1,2, \dots\}$.
\item We will use the notation $C(A_1, \dots, A_n)$ to denote a constant which depends only on the numbers $A_1, \dots, A_n$.
\end{itemize}

We now introduce three technical tools, namely an improved H\"older inequality, an antidivergence operator and a lemma about the mean value of fast oscillating functions. These tools will be frequently used in the following. For a function $g \in C^\infty(\T^d)$ and $\lambda \in \N$, we denote by $g_\lambda: \T^d \to \R$ the $1/\lambda$ periodic function defined by 
\begin{equation}
\label{eq:periodic}
g_\lambda (x) := g(\lambda x). 
\end{equation}
Notice that for every $k \in \N$ and $p \in [1, \infty]$
\begin{equation*}
\|D^k g_\lambda\|_{L^p(\T^d)} = \lambda^k \|D^k g\|_{L^p(\T^d)}.
\end{equation*}

\subsection{Improved H\"older inequality}

We start with the statement of the improved H\"older inequality, inspired by Lemma 3.7 in \cite{Buckmaster:2017wf}. 

\begin{lemma}
\label{l:improved:holder}
Let $\lambda \in \N$ and $f,g: \T^d \to \R$ be smooth functions. Then for every $p \in [1, \infty]$, 
\begin{equation}
\label{eq:improved:holder:1}
\bigg|\|fg_\lambda\|_{L^p} - \|f\|_{L^p} \|g\|_{L^p} \bigg| \leq \frac{C_p}{\lambda^{1/p}} \|f\|_{C^1} \|g\|_{L^p},
\end{equation}
where all the norms are taken on $\T^d$. 
In particular 
\begin{equation}
\label{eq:improved:holder:2}
\|fg_\lambda\|_{L^p} \leq \|f\|_{L^p} \|g\|_{L^p} + \frac{C_p}{\lambda^{1/p}} \|f\|_{C^1} \|g\|_{L^p}.
\end{equation}
\end{lemma}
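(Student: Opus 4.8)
The plan is to reduce the estimate to a statement about a single period cell and then sum up. First I would treat the case $p \in [1,\infty)$; the case $p = \infty$ is elementary since $\|fg_\lambda\|_{L^\infty} \le \|f\|_{L^\infty}\|g\|_{L^\infty}$ and one controls the difference by the oscillation of $f$ on a cell of size $1/\lambda$, which is $O(\lambda^{-1})\|f\|_{C^1}\|g\|_{L^\infty}$, better than the claimed $\lambda^{-1/p}$. For finite $p$, the key identity to exploit is
\begin{equation*}
\|fg_\lambda\|_{L^p}^p = \int_{\T^d} |f(x)|^p |g(\lambda x)|^p\,dx,
\end{equation*}
and I would compare this with $\|f\|_{L^p}^p\|g\|_{L^p}^p = \int_{\T^d}|f(x)|^p\,dx \cdot \int_{\T^d}|g(y)|^p\,dy$. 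Partition $\T^d$ into the $\lambda^d$ cubes $Q_j$ of side $1/\lambda$. On each $Q_j$, by periodicity $\int_{Q_j}|g(\lambda x)|^p\,dx = \lambda^{-d}\int_{\T^d}|g|^p = \lambda^{-d}\|g\|_{L^p}^p$; replacing $|f(x)|^p$ by its value at the center $x_j$ of $Q_j$ (or by its average over $Q_j$) produces exactly the term $\sum_j \lambda^{-d}|f(x_j)|^p \|g\|_{L^p}^p$, which is a Riemann sum for $\|f\|_{L^p}^p\|g\|_{L^p}^p$, and the error in this replacement is controlled by $\operatorname{osc}_{Q_j}(|f|^p) \le C \|f\|_{C^1}\|f\|_{L^\infty}^{p-1}\lambda^{-1}$ times the measure and the $g$-factor.

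The main technical point — and the reason the exponent is $\lambda^{-1/p}$ rather than $\lambda^{-1}$ — is that one is comparing $\|\cdot\|_{L^p}$, not $\|\cdot\|_{L^p}^p$. So after establishing
\begin{equation*}
\Bigl| \|fg_\lambda\|_{L^p}^p - \|f\|_{L^p}^p\|g\|_{L^p}^p \Bigr| \le \frac{C}{\lambda}\,\|f\|_{C^1}^p\,\|g\|_{L^p}^p,
\end{equation*}
(absorbing lower powers of $\|f\|_{C^1}$ into $\|f\|_{C^1}^p$ since the inequality is not scale-sharp in $f$), I would pass to $p$-th roots using the elementary inequality $|a^{1/p} - b^{1/p}| \le |a-b|^{1/p}$ valid for $a,b \ge 0$. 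Applying this with $a = \|fg_\lambda\|_{L^p}^p$ and $b = \|f\|_{L^p}^p\|g\|_{L^p}^p$ gives
\begin{equation*}
\Bigl| \|fg_\lambda\|_{L^p} - \|f\|_{L^p}\|g\|_{L^p} \Bigr| \le \Bigl(\frac{C}{\lambda}\Bigr)^{1/p}\|f\|_{C^1}\|g\|_{L^p} = \frac{C_p}{\lambda^{1/p}}\|f\|_{C^1}\|g\|_{L^p},
\end{equation*}
which is \eqref{eq:improved:holder:1}; then \eqref{eq:improved:holder:2} is immediate from the triangle inequality. I expect the main obstacle to be purely bookkeeping: carefully writing the Riemann-sum error on each cube in terms of $\|f\|_{C^1}$ (using the mean value theorem on $|f|^p$, or equivalently $\big||f(x)|^p - |f(x_j)|^p\big| \le p\|f\|_{L^\infty}^{p-1}\|f\|_{C^1}|x - x_j|$) and checking that the constant depends only on $p$ and $d$ — and, if one wants the constant to depend on $p$ alone as stated, absorbing the harmless dimensional factor, which is legitimate since $\sum_j \lambda^{-d} = 1$. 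No step requires anything beyond Hölder's inequality, periodicity, and the concavity inequality for $t \mapsto t^{1/p}$.
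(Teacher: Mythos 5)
Your proposal is correct and follows essentially the same route as the paper: partition $\T^d$ into $\lambda^d$ cubes of side $1/\lambda$, use periodicity of $g_\lambda$ to factor out $\|g\|_{L^p}^p$ on each cube, bound the oscillation of $|f|^p$ on a cube by $C_p\lambda^{-1}\|f\|_{C^1}^p$, and pass to $p$-th roots via the concavity inequality $|a^{1/p}-b^{1/p}|\le |a-b|^{1/p}$. The only (harmless) cosmetic difference is that the paper compares $|f(x)|^p$ with its cell average $\fint_{Q_j}|f|^p$ rather than the cell-center value, which makes the main term exactly $\|f\|_{L^p}^p\|g\|_{L^p}^p$ with no residual Riemann-sum error — a cleaner choice that you yourself note in passing.
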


\begin{proof}
Let us divide $\T^d$ into $\lambda^d$ small cubes $\{Q_j\}_j$ of edge $1/\lambda$. On each $Q_j$ we have
\begin{equation*}
\begin{split}
\int_{Q_j} 						& |f(x)|^p |g_\lambda(x)|^p \\
								& = \int_{Q_j} \bigg(|f(x)|^p - \fint_{Q_j} |f(y)|^p dy \bigg) |g_\lambda(x)|^p dx + \fint_{Q_j} |f(y)|^p dy \int_{Q_j} |g_\lambda(x)|^p dx \\
								& \text{(since $g_\lambda(x) = g(\lambda x)$)} \\
								& =  \int_{Q_j} \bigg(|f(x)|^p - \fint_{Q_j} |f(y)|^p dy \bigg) |g_\lambda(x)|^p dx + \frac{1}{\lambda^d} \fint_{Q_j} |f(y)|^p dy \int_{\T^d} |g(x)|^p dx \\
								& \text{(since $|Q_j| = 1/\lambda^d$)} \\
								& =  \int_{Q_j} \bigg(|f(x)|^p - \fint_{Q_j} |f(y)|^p dy \bigg) |g_\lambda(x)|^p dx + \int_{Q_j} |f(y)|^p dy \int_{\T^d} |g(x)|^p dx \
\end{split}
\end{equation*}
Summing over $j$ we get
\begin{equation*}
\begin{split}
\|fg_\lambda\|^p_{L^p(\T^d)} 	& = \|f\|^p_{L^p(\T^d)} \|g\|^p_{L^p(\T^d)} \\
								& \qquad + \sum_j \int_{Q_j} \bigg(|f(x)|^p - \fint_{Q_j} |f(y)|^p dy \bigg) |g_\lambda(x)|^p dx.
\end{split}
\end{equation*}
Let us now estimate the second term in the r.h.s. For $x,y \in Q_j$ it holds
\begin{equation*}
\begin{split}
\Big| |f(x)|^p - |f(y)|^p \Big| 		& \leq \frac{C_p}{\lambda} \|f\|^{p-1}_{C^0(\T^d)} \|\nabla f\|_{C^0(\T^d)} \leq \frac{C_p}{\lambda} \|f\|^p_{C^1(\T^d)}. 
\end{split}
\end{equation*}
Therefore
\begin{equation*}
\begin{split}
\sum_j \int_{Q_j} \bigg(|f(x)|^p - \fint_{Q_j} |f(y)|^p dy \bigg) |g_\lambda(x)|^p dx 
							& \leq \frac{C_p}{\lambda} \|f\|^p_{C^1(\T^d)} \sum_j  \int_{Q_j} |g_\lambda(x)|^p dx \\
							& = \frac{C_p}{\lambda} \|f\|^p_{C^1(\T^d)} \|g_\lambda\|^p_{L^p(\T^d)} \\
							& = \frac{C_p}{\lambda} \|f\|^p_{C^1(\T^d)} \|g\|^p_{L^p(\T^d)},
\end{split}
\end{equation*}
from which we get
\begin{equation*}
\bigg|\|fg_\lambda\|^p_{L^p} - \|f\|^p_{L^p} \|g\|^p_{L^p} \bigg| \leq \frac{C_p}{\lambda} \|f\|^p_{C^1} \|g\|^p_{L^p}.
\end{equation*}
Inequality \eqref{eq:improved:holder:1} is now obtained by taking the $1/p$ power in the last formula and using that for $A,B > 0$, $|A - B|^p \leq ||A|^p - |B|^p|$. Finally, the improved H\"older inequality \eqref{eq:improved:holder:2} is an immediate consequence of \eqref{eq:improved:holder:1}. 
\end{proof}

\subsection{Antidivergence operators}

For $f \in C^\infty_0(\T^d)$ there exists a unique $u \in C^\infty_0(\T^d)$ such that $\Delta u = f$. The operator $\Delta^{-1}: C^\infty_0(\T^d) \to C^\infty_0(\T^d)$ is thus well defined. We define the \emph{standard antidivergence operator} as $\nabla \Delta^{-1}: C^\infty_0(\T^d) \to C^\infty(\T^d; \R^d)$. It clearly satisfies $\div (\nabla \Delta^{-1} f) = f$. 

\begin{lemma}
\label{l:antidiv}
For every $k \in \N$ and $p \in [1, \infty]$, the \emph{standard} antidivergence operator satisfies the bounds \begin{equation}
\label{eq:stand:antidiv}
\big\|D^k (\nabla \Delta^{-1} g) \big\|_{L^p} \leq C_{k,p} \|D^k g\|_{L^p}.
\end{equation}
Moreover for every $\lambda \in \N$ it holds
\begin{equation}
\label{eq:stand:antidiv:per}
\big\|D^k (\nabla \Delta^{-1} g_\lambda)\big\|_{L^p} \leq C_{k,p}\lambda^{k-1} \|D^k g\|_{L^p}.
\end{equation}
\end{lemma}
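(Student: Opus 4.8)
The plan is to reduce both estimates to a single convolution bound together with an elementary scaling identity. \emph{First}, observe that $\nabla\Delta^{-1}$ and every partial derivative $D^\alpha$ act as Fourier multipliers on the space $C^\infty_0(\T^d)$ of mean-zero functions, and that any derivative $D^kg$ again has zero mean (being a total derivative on the torus). Hence $D^k(\nabla\Delta^{-1}g)=\nabla\Delta^{-1}(D^kg)$, and it suffices to prove the order-zero bound $\|\nabla\Delta^{-1}h\|_{L^p}\le C_p\|h\|_{L^p}$ for every $h\in C^\infty_0(\T^d)$ and then apply it with $h=D^kg$. The mean-zero bookkeeping here is the one point that must be checked, since $\Delta^{-1}$ is only defined on $C^\infty_0(\T^d)$.

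\emph{Second}, I would prove this order-zero bound through the kernel. Let $G$ be the Green's function of the Laplacian on $\T^d$, i.e.\ the unique mean-zero solution (smooth on $\T^d\setminus\{0\}$) of $\Delta G=\delta_0-1$; near the origin $G(x)=c_d|x|^{2-d}+(\text{smooth})$ when $d\ge 3$ (resp.\ $-\tfrac{1}{2\pi}\log|x|+(\text{smooth})$ when $d=2$), so $|\nabla G(x)|\lesssim|x|^{1-d}$ near $0$ and $\nabla G$ is bounded away from $0$. Since $\int_{|x|\le 1/2}|x|^{1-d}\,dx<\infty$, we get $\nabla G\in L^1(\T^d)$. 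For mean-zero $h$ one has $\Delta^{-1}h=G*h$, hence $\nabla\Delta^{-1}h=(\nabla G)*h$, and Young's convolution inequality gives $\|\nabla\Delta^{-1}h\|_{L^p}\le\|\nabla G\|_{L^1}\|h\|_{L^p}$ for \emph{every} $p\in[1,\infty]$; together with the first step this proves \eqref{eq:stand:antidiv}. (For $1<p<\infty$ one could instead invoke Calder\'on--Zygmund theory, but the kernel argument is what makes the bound uniform up to the endpoints $p=1,\infty$.)

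\emph{Third}, for the periodic version I would use the scaling identities obtained by comparing Fourier series: since $g$ has no zero mode, $\Delta^{-1}(g_\lambda)=\lambda^{-2}(\Delta^{-1}g)_\lambda$, hence $\nabla\Delta^{-1}(g_\lambda)=\lambda^{-1}(\nabla\Delta^{-1}g)_\lambda$, and therefore $D^k\bigl(\nabla\Delta^{-1}(g_\lambda)\bigr)=\lambda^{k-1}\bigl(D^k\nabla\Delta^{-1}g\bigr)_\lambda$ using the scaling of derivatives recorded after \eqref{eq:periodic}. Taking $L^p$ norms, using the scaling invariance $\|F_\lambda\|_{L^p(\T^d)}=\|F\|_{L^p(\T^d)}$ for $\lambda\in\N$, and then applying \eqref{eq:stand:antidiv} yields \eqref{eq:stand:antidiv:per}.

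Overall the argument is short and presents no genuine obstacle; the least routine ingredients are the local integrability of $\nabla G$ (which is precisely what promotes the estimate from the Calder\'on--Zygmund range to all $p\in[1,\infty]$) and the careful handling of the mean-zero constraint so that the multipliers commute and $\Delta^{-1}$ may be applied to $D^kg$.
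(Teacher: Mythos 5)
Your proof is correct, and the base estimate is established by a genuinely different route than the paper's. The paper proves \eqref{eq:stand:antidiv} by a three-step case analysis: Calder\'on--Zygmund for $p\in(1,\infty)$, Sobolev embedding ($W^{1,d+1}\hookrightarrow L^\infty$) combined with the $p=d+1$ case to reach $p=\infty$, and then a duality argument (testing against mean-zero $\varphi\in C^\infty_0$ and bootstrapping with the $p=\infty$ bound) to reach $p=1$. You instead commute $D^k$ with $\nabla\Delta^{-1}$ (legitimate since both are Fourier multipliers acting on the mean-zero sector, and $D^kg$ remains mean-zero), then prove the $k=0$ estimate in one stroke for all $p\in[1,\infty]$ by writing $\nabla\Delta^{-1}h=(\nabla G)*h$ with $G$ the periodic Green's function, observing $\nabla G\in L^1(\T^d)$ from the local singularity $|\nabla G(x)|\lesssim|x|^{1-d}$, and invoking Young's inequality. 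Your approach is more uniform and avoids the duality step; what it asks in return is the explicit knowledge of the periodic Green's function's singularity, which the paper's route sidesteps by staying within Calder\'on--Zygmund theory plus soft functional-analytic tools. The scaling argument for \eqref{eq:stand:antidiv:per} coincides with the paper's, both resting on the identity $\nabla\Delta^{-1}(g_\lambda)=\lambda^{-1}(\nabla\Delta^{-1}g)_\lambda$ and the $\lambda$-periodicity invariance of $L^p(\T^d)$ norms.
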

\begin{proof}
For $p \in (1, \infty)$ from the Calderon-Zygmund inequality we get
\begin{equation}
\label{eq:cald:zyg}
\|D^k (\nabla \Delta^{-1} g)\|_{W^{1,p}(\T^d)} \leq C_{k,p} \|D^k g\|_{L^p(\T^d)},
\end{equation}
from which \eqref{eq:stand:antidiv} follows. For $p = \infty$, we use Sobolev embeddings to get
\begin{equation*}
\begin{split}
\|D^k (\nabla \Delta^{-1} g)\|_{L^\infty(\T^d)} 	& \leq C \|D^k (\nabla \Delta^{-1} g)\|_{W^{1, d+1}(\T^d)} \\
\text{(by \eqref{eq:cald:zyg} with $p=d+1$)}		& \leq C_{k, d+1} \|D^k g\|_{L^{d+1}(\T^d)} \\ 
													& \leq C_{k, \infty} \|D^k g\|_{L^{\infty}(\T^d)}.
\end{split}
\end{equation*}
For $p=1$ we use the dual characterization of $L^1$ norm. For every $f \in L^1$,
\begin{equation*}
\begin{split}
\|f\|_{L^1(\T^d)} 	& = \max \bigg\{ \int_{\T^d} f \varphi \ : \ \varphi \in L^\infty(\T^d), \ \|\varphi\|_{L^\infty(\T^d)} = 1 \bigg\} \\
					& = \sup \bigg\{ \int_{\T^d} f \varphi \ : \ \varphi \in C^\infty(\T^d), \ \|\varphi\|_{L^\infty(\T^d)} = 1 \bigg\}. 
\end{split}
\end{equation*}
Moreover, if $\fint_{\T^d} f = 0$, it also holds
\begin{equation*}
\|f\|_{L^1(\T^d)} = \sup \bigg\{ \int_{\T^d} f \varphi \ : \ \varphi \in C^\infty_0(\T^d), \ \|\varphi\|_{L^\infty(\T^d)} = 1 \bigg\}. 
\end{equation*}
Therefore (in the following formula $\partial^k$ denotes any partial derivative of order $k$):
\begin{equation*}
\begin{split}
\|\partial^k (\nabla \Delta^{-1} g)\|_{L^1(\T^d)}
& = \sup_{\substack{\varphi \in C^\infty_0(\T^d) \\ \|\varphi\|_{L^\infty} = 1}} \int_{\T^d} \partial^k (\nabla \Delta^{-1} g) \  \varphi  \ dx \\
& = \sup_{\substack{\varphi \in C^\infty_0(\T^d) \\ \|\varphi\|_{L^\infty} = 1}} \int_{\T^d} \partial^k g \ \nabla \Delta^{-1} \varphi \ dx \\
\text{(by H\"older)}
& \leq \sup_{\substack{\varphi \in C^\infty_0(\T^d) \\ \|\varphi\|_{L^\infty} = 1}}  \| \partial^k g \|_{L^1(\T^d)} \|\nabla \Delta^{-1} \varphi\|_{L^\infty(\T^d)} \\
\text{(using \eqref{eq:stand:antidiv} with $p=\infty$)}
& \leq C_{0, \infty} \| \partial^k g \|_{L^1(\T^d)} \sup_{\substack{\varphi \in C^\infty_0(\T^d) \\ \|\varphi\|_{L^\infty} = 1}} \| \varphi\|_{L^\infty(\T^d)}  \\
& \leq C_{0, \infty} \| \partial^k g \|_{L^1(\T^d)}, 
\end{split}
\end{equation*}
from with \eqref{eq:stand:antidiv} with $p=1$ follows. To prove \eqref{eq:stand:antidiv:per}, 
observe that 
\begin{equation*}
\nabla \Delta^{-1} g_\lambda(x) = \frac{1}{\lambda} (\nabla \Delta^{-1} g) (\lambda x).
\end{equation*}
Therefore 
\begin{equation*}
\begin{split}
\|D^k (\nabla \Delta^{-1} g_\lambda)\|_{L^p(\T^d)}
			& \leq \lambda^{k-1} \|(D^k (\nabla \Delta^{-1} g))(\lambda \ \cdot)\|_{L^p(\T^d)} \\
			& \leq \lambda^{k-1} \|D^k (\nabla \Delta^{-1} g)\|_{L^p(\T^d)} \\
\text{(by \eqref{eq:stand:antidiv})}
			& \leq C_{k,p}\lambda^{k-1} \|D^k g\|_{L^p(\T^d)},
\end{split}
\end{equation*}
thus proving \eqref{eq:stand:antidiv:per}.
\end{proof}

With the help of the standard antidivergence operator, we now define an \emph{improved} antidivergence operator, which lets us gain a factor $\lambda^{-1}$ when applied to functions of the form $f(x) g(\lambda x)$. 

\begin{lemma}
\label{l:improved:antidiv}
Let $\lambda \in \N$ and $f,g: \T^d \to \R$ be smooth functions with
\begin{equation*}
\fint_{\T^d} fg_\lambda = \fint_{\T^d} g = 0. 
\end{equation*}
Then there exists a smooth vector field $u : \T^d \to \R^d$ such that $\div u = f g_\lambda$ and for every   $k \in \N$ and $p \in [1,\infty]$, 
\begin{equation}
\label{eq:improved:antidiv}
\|D^k u\|_{L^p} \leq C_{k,p} \lambda^{k-1} \|f\|_{C^{k+1}} \|g\|_{W^{k,p}}.
\end{equation}
\end{lemma}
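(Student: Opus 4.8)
The plan is to construct $u$ explicitly by iterating the standard antidivergence operator $\mathcal{R} := \nabla\Delta^{-1}$, peeling off one factor of $\lambda^{-1}$ at each step at the cost of one extra derivative on the slow function $f$. The starting point is the identity
\begin{equation*}
\mathrm{div}\bigl( f\, \mathcal{R}(g_\lambda) \bigr) = \nabla f \cdot \mathcal{R}(g_\lambda) + f g_\lambda,
\end{equation*}
which holds because $\mathrm{div}\,\mathcal{R}(g_\lambda) = g_\lambda$ (here we use $\fint g = 0$, so $g_\lambda \in C^\infty_0$). Thus if we set $u_1 := f\,\mathcal{R}(g_\lambda)$ we have $\mathrm{div}\, u_1 = f g_\lambda + \nabla f\cdot\mathcal{R}(g_\lambda)$, and the error term $\nabla f\cdot\mathcal{R}(g_\lambda)$ is again of the form ``slow function times a $1/\lambda$-periodic function'', but it has mean value zero over $\T^d$? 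Not necessarily — so before iterating I would split off its mean. This leads naturally to a definition by recursion: set $h_0 := f$, and having defined $h_j$, write $\mathcal{R}(g_\lambda) = G_\lambda$ where $G := \mathcal{R}g$ (using the scaling $\mathcal{R}(g_\lambda)(x) = \lambda^{-1}(\mathcal{R}g)(\lambda x)$ from Lemma \ref{l:antidiv}), and observe that $\nabla h_j \cdot G_\lambda$ need not have zero mean; but $(\nabla h_j\cdot G)_\lambda$ does converge in mean to something controllable. The clean way is: since $\fint_{\T^d} f g_\lambda = 0$ is assumed, define $u$ as the telescoping sum
\begin{equation*}
u := \sum_{j=1}^{N} (-1)^{j-1} D^{j-1} f : \mathcal{R}^{(j)}(g_\lambda) \;-\; (\text{correction for means}),
\end{equation*}
but rather than track this combinatorics by hand, I would proceed more carefully as follows.

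First I would introduce, for a smooth $1/\lambda$-periodic building block, an auxiliary operator. Given $f,g$ with $\fint g = 0$, define $u^{(1)} := f\,\mathcal{R}(g_\lambda)$. Then $\mathrm{div}\,u^{(1)} = f g_\lambda + r^{(1)}$ with $r^{(1)} := \nabla f\cdot\mathcal{R}(g_\lambda)$. Using the scaling identity, $\mathcal{R}(g_\lambda) = \lambda^{-1}(\mathcal{R}g)_\lambda$, so $r^{(1)} = \lambda^{-1}\,\nabla f\cdot(\mathcal{R}g)_\lambda$; write $\mathcal{R}g =: g^{(1)} \in C^\infty(\T^d;\R^d)$ (not mean zero in general). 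Decompose $g^{(1)} = \overline{g^{(1)}} + \tilde g^{(1)}$ into its mean and a mean-zero part. Then
\begin{equation*}
r^{(1)} = \lambda^{-1}\nabla f\cdot\overline{g^{(1)}} + \lambda^{-1}\nabla f\cdot\tilde g^{(1)}_\lambda.
\end{equation*}
The first term is a ``slow'' function (no fast oscillation) of size $\lambda^{-1}\|f\|_{C^1}\|g\|_{C^0}$-ish, handled once at the end by a single application of the standard antidivergence. The second term is again of the model form $(\text{slow})\times(\text{fast, mean-zero})$, so I would recurse: apply the same construction with $f$ replaced by $\lambda^{-1}\partial_i f$ and $g$ replaced by a component of $\tilde g^{(1)}$. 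Each recursion step costs one derivative on $f$ (hence the $\|f\|_{C^{k+1}}$ on the right of \eqref{eq:improved:antidiv}, taking $N = k+1$ steps) and gains a factor $\lambda^{-1}$, so after finitely many steps the accumulated ``slow'' remainder, plus the top-order fast remainder, can all be bounded by the standard antidivergence estimate \eqref{eq:stand:antidiv} and its periodic version \eqref{eq:stand:antidiv:per}. Collecting terms: the $L^p$ norm of the $j$-th piece of $u$ is controlled, via \eqref{eq:stand:antidiv:per}, by $\lambda^{k-1}\cdot\lambda^{-(j-1)}\|D^{j-1}f\|_{C^{?}}\|g\|_{W^{k,p}}$ applied with the derivative count chosen so the worst term is $\lambda^{k-1}\|f\|_{C^{k+1}}\|g\|_{W^{k,p}}$; the hypothesis $\fint fg_\lambda = 0$ is what guarantees the very last ``slow'' remainder is itself mean-zero so that $\mathcal{R}$ can be applied to it, closing the construction with $\mathrm{div}\,u = fg_\lambda$ exactly.

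The main obstacle I anticipate is not any single estimate — each is a routine application of Lemmas \ref{l:antidiv} — but rather the \emph{bookkeeping}: organizing the recursion so that (i) the derivative count on $f$ never exceeds $k+1$ while the derivative count on $g$ never exceeds $k$ for the $W^{k,p}$ bound on $D^k u$, (ii) every intermediate ``fast'' factor that gets fed back into $\mathcal{R}$ genuinely has zero mean on $\T^d$, and (iii) the finitely many ``slow'' leftover terms assemble into a single mean-zero function (this is where $\fint fg_\lambda = 0$ enters). A convenient device to handle (ii)–(iii) uniformly is to define the improved antidivergence directly by the terminating formula
\begin{equation*}
u := \sum_{j=0}^{N-1} (-1)^j \nabla^j f \; \mathord{:}\; \mathcal{R}^{j+1}_\lambda(g) \;+\; (-1)^N\,\mathcal{R}\Bigl( \nabla^N f \mathord{:} (\text{mean part of }\mathcal{R}^N g)_\lambda\text{-type remainder}\Bigr),
\end{equation*}
with $N$ large enough (e.g. $N = k+1$) that the residual fast term carries a surplus factor $\lambda^{-N+k} \le \lambda^{-1}$; one then verifies $\mathrm{div}\,u = fg_\lambda$ by a direct telescoping computation and reads off \eqref{eq:improved:antidiv} term by term from \eqref{eq:stand:antidiv}–\eqref{eq:stand:antidiv:per}. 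The verification that the divergence comes out exactly right, together with checking the mean-zero hypotheses are propagated correctly at each stage, is the part that requires care; the norm bounds themselves are then immediate.
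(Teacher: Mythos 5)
Your opening step — set $u_1:=f\,\nabla\Delta^{-1}g_\lambda$ and compute $\div u_1 = fg_\lambda + \nabla f\cdot\nabla\Delta^{-1}g_\lambda$ — is exactly the right start, and the bounds \eqref{eq:stand:antidiv}--\eqref{eq:stand:antidiv:per} are the right tools. But the recursion you then launch is a self-inflicted complication, and it stems from a gap in your reasoning: you ask whether the error term $\nabla f\cdot\nabla\Delta^{-1}g_\lambda$ has zero mean and conclude ``not necessarily.'' In fact it \emph{always} has zero mean: $\div u_1$ integrates to zero on the torus, and $\fint fg_\lambda=0$ is precisely the hypothesis, so
\begin{equation*}
\fint_{\T^d}\nabla f\cdot\nabla\Delta^{-1}g_\lambda \;=\; \fint_{\T^d}\div u_1 \;-\; \fint_{\T^d} fg_\lambda \;=\; 0.
\end{equation*}
(Incidentally the mean you propose to split off later, $\overline{\mathcal{R}g}=\fint_{\T^d}\nabla\Delta^{-1}g$, also vanishes identically, being the integral of a gradient over the torus, so that decomposition is vacuous.) With the error term known to be mean-zero one can hit it once more with the standard antidivergence and \emph{stop}: the fixed, $k$-independent formula
\begin{equation*}
u \;:=\; f\,\nabla\Delta^{-1}g_\lambda \;-\; \nabla\Delta^{-1}\bigl(\nabla f\cdot\nabla\Delta^{-1}g_\lambda\bigr)
\end{equation*}
already satisfies $\div u = fg_\lambda$ exactly, and this is the paper's construction. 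No telescoping, no tracking of how many times to iterate, no $N=k+1$.

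The estimate \eqref{eq:improved:antidiv} then drops out for every $k$ from this single formula: differentiating $k$ times, applying Leibniz, and using $\|D^j\nabla\Delta^{-1}g_\lambda\|_{L^p}\le C\lambda^{j-1}\|D^j g\|_{L^p}$ from \eqref{eq:stand:antidiv:per} (with $j\le k$, so the worst power of $\lambda$ is $\lambda^{k-1}$) together with the $\lambda$-free bound \eqref{eq:stand:antidiv} for the outer $\nabla\Delta^{-1}$ on the second summand yields $\|D^k u\|_{L^p}\le C_{k,p}\lambda^{k-1}\|f\|_{C^{k+1}}\|g\|_{W^{k,p}}$; the extra derivative on $f$ enters only through the $\nabla f$ sitting inside the outer antidivergence. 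The ``main obstacle'' you flag — the bookkeeping of propagating mean-zero conditions and derivative counts through a finite recursion — simply does not arise once you notice the error term is automatically mean-zero. As written, your proposal does not contain a correct proof: the construction is never pinned down (the final display has a ``-type remainder'' placeholder), the termination criterion is unjustified, and the central mean-zero issue, which is the one place the hypothesis $\fint fg_\lambda=0$ is genuinely needed, is identified as a problem rather than resolved.
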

\noindent We will write
\begin{equation*}
u = \mathcal R (f g_\lambda).
\end{equation*}

\begin{remark}
The same result holds if $f, g$ are vector fields and we want to solve the equation $\div u = f \cdot g_\lambda$, where $\cdot$ denotes the scalar product. 
\end{remark}

\begin{proof}
Set
\begin{equation*}
u := f \nabla \Delta^{-1} g_\lambda - \nabla \Delta^{-1} \Big(\nabla f \cdot \nabla \Delta^{-1}g_\lambda  \Big).
\end{equation*}
It is immediate from the definition that $\div u = fg_\lambda$. We show that \eqref{eq:improved:antidiv} holds for $k=0,1$. The general case $k \in \N$ can be easily proven by induction. It holds (the constant $C_{0,p}$ can change its value from line to line)
\begin{equation*}
\begin{split}
\|u\|_{L^p} 	& \leq \|f\|_{C^0} \|\nabla \Delta^{-1}g_\lambda\|_{L^p} + \big\|\nabla \Delta^{-1} \big(\nabla f \cdot \nabla \Delta^{-1} g_\lambda \big) \big\|_{L^p} \\
\text{(by \eqref{eq:stand:antidiv})} 
				& \leq \|f\|_{C^0} \|\nabla \Delta^{-1}g_\lambda\|_{L^p} + C_{0,p} \|\nabla f\|_{C^0} \|\nabla \Delta^{-1} g_\lambda \|_{L^p} \\
\text{(by \eqref{eq:stand:antidiv:per})} 
				& \leq \frac{C_{0,p}}{\lambda} \|f\|_{C^1} \|g\|_{L^p},
\end{split}
\end{equation*}
so that \eqref{eq:improved:antidiv} holds for $k=0$. For $k=1$ we compute
\begin{equation*}
\partial_j u = \partial_j f \nabla \Delta^{-1} g_\lambda + f \partial_j \nabla \Delta^{-1}  g_\lambda - \nabla \Delta^{-1} \Big( \nabla \partial_j f \cdot \nabla \Delta^{-1} g_\lambda + \nabla f \cdot \partial_j \nabla \Delta^{-1}  g_\lambda \Big).
\end{equation*}
Therefore, using again \eqref{eq:stand:antidiv} and \eqref{eq:stand:antidiv:per}, (the constant $C_{1,p}$ can change its value from line to line)
\begin{equation*}
\begin{split}
\|\partial_j u\|_{L^p} 	& \leq C_{1,p} \bigg[ \|\partial_j f\|_{C^0} \|\nabla \Delta^{-1} g_\lambda\|_{L^p} + \|f\|_{C^0} \|\partial_j \nabla \Delta^{-1}  g_\lambda \|_{L^p} \\
							& \qquad +  \|\nabla \partial_j f\|_{C^0} \|\nabla \Delta^{-1} g_\lambda\|_{L^p} +  \|\nabla f\|_{C^0} \|\partial_j\nabla \Delta^{-1}  g_\lambda \|_{L^p} \bigg]\\
							& \leq C_{1,p} \bigg[ \frac{1}{\lambda} \|f\|_{C^1} \|g\|_{L^p} + \|f\|_{C^0} \|\partial_j g\|_{L^p} \\
							& \qquad \qquad + \frac{1}{\lambda} \|f\|_{C^2} \|g\|_{L^p} + \|f\|_{C^1} \|\partial_j g\|_{L^p} \bigg] \\
							& \leq C_{1,p} \bigg[ \|f\|_{C^1} \|\partial_j g\|_{L^p} + \frac{1}{\lambda} \|f\|_{C^2} \|g\|_{L^p} \bigg] \\
							& \leq C_{1,p} \|f\|_{C^2} \|g\|_{W^{1,p}}. \qedhere
\end{split}
\end{equation*}
\end{proof}

\begin{remark}
\label{rmk:time:as:param}
Assume  $f$ and $g$ are smooth function of $(t,x)$, $t \in [0,T]$, $x \in \T^d$. If at each time $t$ they satisfy (in the space variable) the assumptions of Lemma \ref{l:improved:antidiv}, then we can apply $\mathcal R$ at each time and define
\begin{equation*}
u(t, \cdot) := \mathcal R \Big( f(t, \cdot) g_\lambda(t, \cdot) \Big),
\end{equation*}
where $g_\lambda(t,x) = g(t, \lambda x)$. It follows from the definition of $\mathcal R$ that $u$ is a smooth function of $(t,x)$. 
\end{remark}

\subsection{Mean value and fast oscillations}

\begin{lemma}
\label{l:mean:value}
Let $\lambda \in \N$ and $f,g: \T^d \to \R$ be smooth functions with
\begin{equation*}
\fint_{\T^d} g(x) dx = 0. 
\end{equation*} 
Then
\begin{equation*}
\bigg| \fint_{\T^d} f(x) g(\lambda x) dx \bigg| \leq  \frac{\sqrt{d} \|f\|_{C^1}\|g\|_{L^1}}{\lambda}.  
\end{equation*}
\end{lemma}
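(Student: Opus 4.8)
The plan is to mimic the proof of the improved H\"older inequality (Lemma~\ref{l:improved:holder}): partition $\T^d$ into the $\lambda^d$ closed cubes $\{Q_j\}_j$ of side length $1/\lambda$ induced by the lattice $\tfrac1\lambda\Z^d$, and exploit on each $Q_j$ the cancellation coming from the fact that the fast oscillating factor $g(\lambda\,\cdot)$ has zero mean over $Q_j$. Since $|\T^d|=1$ the quantity to estimate is simply
\[
\fint_{\T^d} f(x)g(\lambda x)\,dx=\int_{\T^d} f(x)g(\lambda x)\,dx=\sum_j\int_{Q_j}f(x)g(\lambda x)\,dx,
\]
so it suffices to bound each summand.

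First I would fix, for each $j$, a vertex $x_j$ of the cube $Q_j$ and write $f(x)=\bigl(f(x)-f(x_j)\bigr)+f(x_j)$ on $Q_j$. The contribution of the constant piece vanishes: by the change of variables $y=\lambda x$ (here $\lambda\in\N$ is used, so that $\lambda Q_j$ is exactly a unit cell with vertices in $\Z^d$) and the $\Z^d$-periodicity of $g$,
\[
\int_{Q_j} g(\lambda x)\,dx=\frac{1}{\lambda^d}\int_{\lambda Q_j} g(y)\,dy=\frac{1}{\lambda^d}\int_{\T^d} g(y)\,dy=0,
\]
using the hypothesis $\fint_{\T^d}g=0$. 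Hence $\int_{Q_j}f(x)g(\lambda x)\,dx=\int_{Q_j}\bigl(f(x)-f(x_j)\bigr)g(\lambda x)\,dx$, and for the remaining term I would estimate the oscillation of $f$ on $Q_j$ by the mean value theorem: for $x\in Q_j$ one has $|f(x)-f(x_j)|\le\|\nabla f\|_{C^0}\,|x-x_j|\le\|\nabla f\|_{C^0}\,\mathrm{diam}(Q_j)=\tfrac{\sqrt d}{\lambda}\,\|\nabla f\|_{C^0}\le\tfrac{\sqrt d}{\lambda}\,\|f\|_{C^1}$, since a cube of side $1/\lambda$ has diameter $\sqrt d/\lambda$ and $x_j$ is one of its vertices. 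Therefore
\[
\Big|\int_{Q_j}f(x)g(\lambda x)\,dx\Big|\le\frac{\sqrt d}{\lambda}\,\|f\|_{C^1}\int_{Q_j}|g(\lambda x)|\,dx,
\]
and summing over $j$ gives $\big|\int_{\T^d}f(x)g(\lambda x)\,dx\big|\le\tfrac{\sqrt d}{\lambda}\|f\|_{C^1}\int_{\T^d}|g(\lambda x)|\,dx=\tfrac{\sqrt d}{\lambda}\|f\|_{C^1}\|g\|_{L^1}$, which is the claim.

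The only step that requires care is the cancellation $\int_{Q_j}g(\lambda x)\,dx=0$: this is precisely where both $\lambda\in\N$ and $\fint_{\T^d}g=0$ enter, and it is the exact analogue of the mean‑zero trick in Lemma~\ref{l:improved:holder}; everything else is an elementary first‑order Taylor estimate, and the constant $\sqrt d$ is nothing but the diameter, in units of $1/\lambda$, of the small cubes. One could alternatively write $g(\lambda\,\cdot)=\tfrac1\lambda\,\div\!\big[(\nabla\Delta^{-1}g)(\lambda\,\cdot)\big]$, integrate by parts, and invoke Lemma~\ref{l:antidiv}, but this produces a dimensional constant worse than the sharp $\sqrt d$, so the partition argument is preferable.
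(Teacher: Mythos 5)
Your proposal is correct and follows essentially the same approach as the paper: partition $\T^d$ into $\lambda^d$ cubes of side $1/\lambda$, use the zero mean of $g(\lambda\,\cdot)$ on each cube to replace $f$ by $f-f(x_j)$, and estimate the oscillation by $\sqrt d\,\|f\|_{C^1}/\lambda$. The paper's proof is identical except that it does not spell out the cancellation $\int_{Q_j}g(\lambda x)\,dx=0$, which you correctly identify as the key step.
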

\begin{proof}
We divide $\T^d$ into small cubes $\{Q_j\}$ of edge $1/\lambda$. For each $Q_j$, choose a point $x_j \in Q_j$. We have
\begin{equation*}
\begin{split}
\bigg|\int_{\T^d} f(x) g(\lambda x) dx \bigg|
& = \bigg| \sum_j \int_{Q_j} f(x) g(\lambda x) dx \bigg|\\
& = \bigg|\sum_j \int_{Q_j} \big[ f(x) - f(x_j) \big] g(\lambda x) dx \bigg| \\
& \leq \sum_j \int_{Q_j} \big| f(x) - f(x_j) \big| |g(\lambda x)| dx  \\
& \leq \frac{\sqrt{d} \|f\|_{C^1}\|g\|_{L^1(\T^d)}}{\lambda}.  \qedhere
\end{split}
\end{equation*}
\end{proof}

\section{Statement of the main proposition and proof of Theorem \ref{thm:main}}

We assume without loss of generality that $T=1$ and $\T^d$ is the periodic extension of the unit cube $[0,1]^d$. 
The following proposition contains the key facts used to prove Theorem \ref{thm:main}. Let us first introduce the \emph{continuity-defect} equation:
\begin{equation}
\label{eq:cont:reyn}
\left\{
\begin{split}
\partial_t \rho + \div(\rho u) 	& = - \div R, \\
\div u 								& = 0.
\end{split}
\right.
\end{equation}
We will call $R$ the \emph{defect field}. For $\sigma>0$ set $I_\sigma = (\sigma, 1 - \sigma)$. Recall that we are assuming $p \in (1, \infty)$.

\begin{proposition}
\label{p:main}
There exists a constant $M>0$ such that the following holds. Let $\eta, \delta, \sigma > 0$ and let $(\rho_0, u_0, R_0)$ be a smooth solution of the continuity-defect equation \eqref{eq:cont:reyn}. Then there exists another smooth solution $(\rho_1, u_1, R_1)$ of \eqref{eq:cont:reyn} such that
\begin{subequations}
\begin{align}
\|\rho_1(t) - \rho_0(t)\|_{L^p(\T^d)}		& \leq 
\begin{cases}
M \eta \|R_0(t)\|^{1/p}_{L^1(\T^d)}, 		& t \in I_{\sigma/2},  \\
0, 											& t \in [0,1] \setminus I_{\sigma/2},
\end{cases} \label{eq:dist:rho:stat} \\
\|u_1(t) - u_0(t)\|_{L^{p'}(\T^d)}			& \leq 
\begin{cases}
M \eta^{-1} \|R_0(t)\|^{1/p'}_{L^1(\T^d)}, 		& t \in I_{\sigma/2}, \\
0, 													& t \in [0,1] \setminus I_{\sigma/2}, 
\end{cases} \label{eq:dist:u:1:stat} \\
\|u_1(t) - u_0(t)\|_{W^{1,\tilde p}(\T^d)} 	& \leq \delta, \label{eq:dist:u:2:stat} \\[.1em]
\|R_1(t)\|_{L^1(\T^d)} 						& \leq
\begin{cases}
\delta,										& t \in I_\sigma, \\
\|R_0(t)\|_{L^1(\T^d)} + \delta,			& t \in I_{\sigma/2} \setminus I_\sigma, \\
\|R_0(t)\|_{L^1(\T^d)},					& t \in [0,1] \setminus I_{\sigma/2}.
\end{cases} \label{eq:reyn:stat}
\end{align}
\end{subequations}
\end{proposition}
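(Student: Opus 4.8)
The plan is to perform one step of a convex integration scheme. To $(\rho_0,u_0,R_0)$ we add a perturbation which oscillates at high frequency and is strongly concentrated in space, built from the Mikado flows of Proposition~\ref{p:mikado}, arranged so that the quadratic self-interaction of the density- and velocity-perturbations reproduces $R_0$ up to high-frequency errors and a remainder supported in $[0,1]\setminus I_\sigma$; this reproduced $R_0$ then cancels the term $-\div R_0$ in \eqref{eq:cont:reyn}. The reason all four bounds can be met at once is that concentration is essentially free in the product $\|\text{density pert.}\|_{L^p}\|\text{velocity pert.}\|_{L^{p'}}$ (the gains in the dual exponents cancel), while it makes the velocity perturbation small in $W^{1,\tilde p}$; it is exactly here that \eqref{eq:p:ineq}, hence $d\ge3$, is used.

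Introduce two large parameters: an oscillation parameter $\lambda\in\N$ and a concentration parameter $\mu$. From Proposition~\ref{p:mikado} take, for each direction $k$ in a fixed finite set, a pair $(\Theta^{(k)},W^{(k)})$ of smooth $\Z^d$-periodic functions (depending on $\mu$) --- $\Theta^{(k)}$ scalar, $W^{(k)}$ divergence-free --- supported on pairwise disjoint thin pipes of total volume fraction $\sim\mu^{-(d-1)}$, with zero mean, $\div(\Theta^{(k)}W^{(k)})=0$, $\fint_{\T^d}\Theta^{(k)}W^{(k)}=e_k$, and with the $\mu$-scaling of their $L^q$ and $W^{1,q}$ norms recorded there; set $\Theta^{(k)}_\lambda(x)=\Theta^{(k)}(\lambda x)$, $W^{(k)}_\lambda(x)=W^{(k)}(\lambda x)$ as in \eqref{eq:periodic}. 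By a geometric partition-of-unity decomposition, write $R_0(t,x)=\sum_k c_k(t,x)d_k(t,x)e_k$ with smooth coefficients satisfying $|c_k|\le C\eta|R_0|^{1/p}$, $|d_k|\le C\eta^{-1}|R_0|^{1/p'}$ and the corresponding $C^N$-bounds in terms of $\|R_0\|_{C^N}$ (the mild non-smoothness of $r\mapsto r^{1/p}$ at the zero set of $R_0$ is removed by a harmless regularization whose error is absorbed into the choice of $\lambda$). Fix a time cutoff $\psi\in C^\infty_c(I_{\sigma/2})$, $0\le\psi\le1$, $\psi\equiv1$ on $I_\sigma$. The ansatz is $\rho_1=\rho_0+\vartheta_p+\vartheta_c$, $u_1=u_0+w_p+w_c$, with principal parts $\vartheta_p=\psi\sum_k c_k\Theta^{(k)}_\lambda$, $w_p=\psi\sum_k d_k W^{(k)}_\lambda$ and corrections $\vartheta_c(t):=-\fint_{\T^d}\vartheta_p(t,\cdot)$ (a function of $t$ only, of size $O(\lambda^{-1})$ by Lemma~\ref{l:mean:value}, so that $\rho_1(t)$ keeps the spatial mean of $\rho_0(t)$) and $w_c:=-\sum_k\mathcal R\bigl(\nabla(\psi d_k)\cdot W^{(k)}_\lambda\bigr)$ (after subtracting means so that $\mathcal R$ applies), using $\div W^{(k)}=0$ and the improved antidivergence of Lemma~\ref{l:improved:antidiv} so that $\div u_1=0$; since $\mathcal R$ gains $\lambda^{-1}$, $w_c$ is lower order. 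Off $I_{\sigma/2}$ the perturbation vanishes, giving the third lines of \eqref{eq:dist:rho:stat}--\eqref{eq:reyn:stat}.

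To build $R_1$ with $\partial_t\rho_1+\div(\rho_1u_1)=-\div R_1$, subtract the equation for $(\rho_0,u_0,R_0)$: the new right-hand side is $\partial_t(\vartheta_p+\vartheta_c)+\div(\rho_0w+\vartheta u_0+\vartheta w)$ with $\vartheta=\vartheta_p+\vartheta_c$, $w=w_p+w_c$. Because distinct pipes are disjoint, all cross terms of $\vartheta_pw_p$ vanish, so $\vartheta_pw_p=\psi^2\sum_k c_kd_k\,(\Theta^{(k)}W^{(k)})_\lambda$, and $\fint\Theta^{(k)}W^{(k)}=e_k$ with $\sum_k c_kd_k e_k=R_0$ give $\vartheta_pw_p=\psi^2R_0+(\text{fast, mean-zero in the fast variable})$. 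Hence the right-hand side equals $-\div\bigl((\psi^2-1)R_0\bigr)$ plus a finite sum of terms of the form (smooth in $(t,x)$)$\times$(fast, mean-zero in the fast variable): the oscillatory part of $\vartheta_pw_p$; the linear terms $\rho_0w$ and $\vartheta_pu_0$ (note $\div(\vartheta_cu_0)=\vartheta_c\div u_0=0$ and $\div(\rho_0w_p)=\sum_k\nabla(\rho_0\psi d_k)\cdot W^{(k)}_\lambda$ by $\div W^{(k)}=0$); $\partial_t\vartheta_p$ (still fast-mean-zero, as $\partial_t$ hits only smooth coefficients); and the lower-order terms $\vartheta_pw_c,\vartheta_cw_p,\vartheta_cw_c,\partial_t\vartheta_c$. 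Applying $\mathcal R$ to each of these (after subtracting the tiny total mean, bounded by Lemma~\ref{l:mean:value}) and adding $(1-\psi^2)R_0$ yields $R_1$; since $(1-\psi^2)R_0$ vanishes on $I_\sigma$ and off $I_{\sigma/2}$, this reproduces the block structure of \eqref{eq:reyn:stat}.

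For the estimates: \eqref{eq:dist:rho:stat} and \eqref{eq:dist:u:1:stat} follow from the improved H\"older inequality (Lemma~\ref{l:improved:holder}) applied to $\vartheta_p,w_p$ --- in the normalization of Proposition~\ref{p:mikado}, $\|\Theta^{(k)}\|_{L^p}$ and $\|W^{(k)}\|_{L^{p'}}$ are bounded uniformly in $\mu$, their product being $O(1)$ --- together with $\||R_0|^{1/p}\|_{L^p}=\|R_0\|_{L^1}^{1/p}$, so $\|\vartheta_p(t)\|_{L^p}\lesssim\eta\|R_0(t)\|_{L^1}^{1/p}$ and $\|w_p(t)\|_{L^{p'}}\lesssim\eta^{-1}\|R_0(t)\|_{L^1}^{1/p'}$, with support in $I_{\sigma/2}$. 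For \eqref{eq:dist:u:2:stat} one uses $\|W^{(k)}_\lambda\|_{W^{1,\tilde p}}\lesssim\lambda\|W^{(k)}\|_{W^{1,\tilde p}}$, and Proposition~\ref{p:mikado} gives $\|W^{(k)}\|_{W^{1,\tilde p}}$ as a negative power of $\mu$ precisely when \eqref{eq:p:ineq} holds, so $\|w_p\|_{W^{1,\tilde p}}\to0$ provided $\lambda$ grows slower than a suitable positive power of $\mu$. Finally \eqref{eq:reyn:stat} follows from Lemma~\ref{l:improved:antidiv}: every $\mathcal R$-term is bounded by $\lambda^{-1}$ times a quantity depending on $\mu$ and on $\eta,\|\rho_0\|_{C^N},\|u_0\|_{C^N},\|R_0\|_{C^N}$, so with $\mu$ fixed one makes the total $\le\delta$ by taking $\lambda$ large. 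Thus one first picks $\mu=\mu(\eta,\delta,\sigma,\rho_0,u_0,R_0)$ large, then $\lambda=\lambda(\mu,\dots)$ large. The two constraints on $\lambda$ --- large enough to kill the antidivergence errors, yet small compared with a power of $\mu$ so that $\nabla w_p$ stays small in $L^{\tilde p}$ --- are compatible \emph{exactly} because of \eqref{eq:p:ineq}, and this compatibility (which also forces $d\ge3$) is the main obstacle; a further delicate point is the transport error $\div(\vartheta_pu_0)$, which has only one integrable derivative and must be handled by a single integration by parts together with the concentration gain of the Calderon--Zygmund antidivergence on the Mikado profiles, and one must bookkeep carefully so that every contribution to $R_1$ that is not $\le\delta$ is confined to the layer $I_{\sigma/2}\setminus I_\sigma$.
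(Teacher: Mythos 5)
Your proposal follows essentially the same approach as the paper: the same Mikado building blocks from Proposition~\ref{p:mikado}, the same two-parameter ansatz with oscillation $\lambda$ and concentration $\mu$, the same use of Lemma~\ref{l:improved:holder} for \eqref{eq:dist:rho:stat}--\eqref{eq:dist:u:1:stat} and of Lemma~\ref{l:improved:antidiv} for the defect, and the same splitting of $R_1$ into a quadratic oscillatory piece, a $(1-\psi^2)R_0$ piece, linear concentration-controlled pieces, and corrector pieces. One imprecision worth fixing: the error from smoothing $|R_{0,j}|^{1/p}$ near the zero set of $R_0$ is \emph{not} absorbed into the choice of $\lambda$ --- it is a fixed quantity, independent of $\lambda$ and $\mu$, and must instead be made $\leq \delta/2$ by tuning the smoothing threshold relative to $\delta$ (in the paper this is the role of the cutoffs $\chi_j$ and the resulting term $R^\chi$, which is exactly why the bound in \eqref{eq:reyn:stat} is $\delta$ rather than $o(1)$ as $\lambda\to\infty$); also, the term $\div(\vartheta_p u_0)$ requires no antidivergence or integration by parts at all, since $\vartheta_p u_0$ is already a primitive and $\|\vartheta_p u_0\|_{L^1}\lesssim\|u_0\|_{C^0}\|\Theta_\mu^j\|_{L^1}\lesssim\mu^{-\gamma}$ by concentration alone.
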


\begin{proof}[Proof of Theorem \ref{thm:main} assuming Proposition \ref{p:main}]
Let $M$ be the constant in Proposition \ref{p:main}. Let $\tilde \e>0$ and $\eta >0$ (their precise value will be fixed later, with $\eta$ depending on $\tilde \e$). Let $\sigma_q = \delta_q := 2^{-q}$ and $I_q := I_{\sigma_q} = (\sigma_q, 1 - \sigma_q)$. 

We construct a sequence $(\rho_q, u_q, R_q)$ of solutions to \eqref{eq:cont:reyn} as follows. Let $\phi_0, \phi, \phi_1: [0,1] \to \R$ three smooth functions such that 
\begin{equation*}
\phi_0(t) + \phi(t) + \phi_1(t) = 1 \quad \text{ for every } t \in [0,1],
\end{equation*}
and
\begin{equation*}
\begin{split}
\phi_0(t) = 1 		& \text{ on } [0,\tilde \e], \\
\phi(t) = 1 		& \text{ on } [2\tilde \e, 1 - 2 \tilde \e], \\
\phi_1(t) = 1 		& \text{ on } [1 - \tilde \e, 1].
\end{split}
\end{equation*}
Set  
\begin{equation}
\label{eq:starting:data}
\begin{split}
\rho_0(t) 	& := \phi_0(t) \bar \rho(0) + \phi(t) \bar \rho(t) + \phi_1(t) \bar \rho(1), \\
u_0(t)	 	& := 0,  \\
R_0(t) 		& := - \nabla \Delta^{-1} \big( \partial_t \rho_0(t) + \div(\rho_0(t) u_0(t)) \big) = - \nabla \Delta^{-1} \big( \partial_t \rho_0(t) \big),
\end{split}
\end{equation}
where the antidivergence is taken with respect to the spatial variable.

Assume now that $(\rho_q, u_q, R_q)$ is defined. Let $(\rho_{q+1}, u_{q+1}, R_{q+1})$ be the solution to the continuity-defect equation, which is obtained by applying Proposition \ref{p:main} to $(\rho_q, u_q, R_q)$, $\eta$, 
\begin{equation*}
\delta = \delta_{q+2}, \quad  \sigma = \sigma_{q+1} \quad \text{(and thus  $\sigma/2 = \sigma_{q+2}$)}.
\end{equation*}

\begin{lemma}
\label{l:inductive:est}
The following inductive estimates are satisfied:
\begin{subequations}
\refstepcounter{equation}\label{eq:dist:rho}
\refstepcounter{equation}\label{eq:dist:u:1}
\refstepcounter{equation}\label{eq:dist:u:2}
\refstepcounter{equation}\label{eq:reyn}
\begin{align}
\tag*{\mytag{eq:dist:rho}{q}}
\|\rho_q(t) - \rho_{q-1}(t)\|_{L^p}								& \leq
\begin{cases}
M \eta \delta_{q}^{1/p}, 		& t \in I_{q-1}, \\
M \eta [ \|R_0(t)\|_{L^1} + \delta_{q}]^{1/p}, 	& t \in I_q \setminus I_{q-1}, \\
M \eta \|R_0(t)\|^{1/p}_{L^1}, 							& t \in I_{q+1} \setminus I_q, \\
0, 								& t \in [0,1] \setminus I_{q+1},
\end{cases}  \\
\tag*{\mytag{eq:dist:u:1}{q}}
\|u_{q}(t) - u_{q-1}(t)\|_{L^{p'}} 								& \leq 
\begin{cases}
M \eta^{-1} \delta_{q}^{1/p'}, 		& t \in I_{q-1}, \\
M \eta^{-1} [ \|R_0(t)\|_{L^1} + \delta_{q}]^{1/p'}, 	& t \in I_q \setminus I_{q-1}, \\
M \eta^{-1} \|R_0(t)\|^{1/p'}_{L^1}, 							& t \in I_{q+1} \setminus I_q, \\
0, 								& t \in [0,1] \setminus I_{q+1},
\end{cases}  \\
\tag*{\mytag{eq:dist:u:2}{q}}
\|u_{q}(t) - u_{q-1}(t)\|_{W^{1,\tilde p}} 						& \leq \delta_{q+1},   \\
\|R_{q}(t)\|_{L^1}												& \leq 
\begin{cases}
\delta_{q+1}, 							& t \in I_{q}, \\
\|R_0(t)\|_{L^1} + \delta_{q+1},		& t \in I_{q+1} \setminus  I_{q}, \\
\|R_0(t)\|_{L^1}, 						& t \in [0,1] \setminus  I_{q+1}. \\
\end{cases}	  
\tag*{\mytag{eq:reyn}{q}}
\end{align}
\end{subequations}
\end{lemma}
\begin{proof}
For $q=0$, \mytag{eq:dist:rho}{q}-\mytag{eq:dist:u:2}{q} do not apply, whereas \mytag{eq:reyn}{q} is trivially satisfied, since $I_0 = \emptyset$. Assume now that \mytag{eq:dist:rho}{q}-\mytag{eq:reyn}{q} hold and let us prove \mytag{eq:dist:rho}{q+1}-\mytag{eq:reyn}{q+1}. From \eqref{eq:dist:rho:stat} we get
\begin{equation*}
\|\rho_{q+1}(t) - \rho_q(t)\|_{L^p} \leq 
\begin{cases}
M \eta \|R_q(t)\|_{L^1}^{1/p}, 	& t \in I_{q+2}, \\
0, 									& t \in [0,1] \setminus I_{q+2}. 
\end{cases}
\end{equation*}
Therefore, using the inductive assumption \mytag{eq:reyn}{q}, we get:
\begin{itemize}
\item if $t \in I_q$,
\begin{equation*}
\|\rho_{q+1}(t) - \rho_q(t)\|_{L^p} \leq M \eta \|R_q(t)\|_{L^1}^{1/p}, \leq M \eta \delta_{q+1}^{1/p};
\end{equation*}
\item if $t \in I_{q+1} \setminus I_q$, 
\begin{equation*}
\|\rho_{q+1}(t) - \rho_q(t)\|_{L^p} \leq M \eta \|R_q(t)\|_{L^1}^{1/p} \leq M \eta \Big[|R_0(t)\|_{L^1} + \delta_{q+1} \Big]^{1/p};
\end{equation*}
\item if $t \in I_{q+2} \setminus I_{q+1}$,
\begin{equation*}
\|\rho_{q+1}(t) - \rho_q(t)\|_{L^p} \leq M \eta \|R_q(t)\|_{L^1}^{1/p} 
\leq M \eta \|R_0(t)\|_{L^1}^{1/p},
\end{equation*}
\end{itemize}
and thus \mytag{eq:dist:rho}{q+1} holds. Estimate \mytag{eq:dist:u:1}{q+1} can be proven similarly. Estimate \mytag{eq:dist:u:2}{q+1} is an immediate consequence of \eqref{eq:dist:u:2:stat}. Finally, from \eqref{eq:reyn:stat}, we get
\begin{equation*}
\|R_{q+1}(t)\|_{L^1} 						 \leq
\begin{cases}
\delta_{q+2},											& t \in I_{q+1}, \\
\|R_q(t)\|_{L^1} + \delta_{q+2},				& t \in I_{q+2} \setminus I_{q+1}, \\
\|R_q(t)\|_{L^1},								& t \in [0,1] \setminus I_{q+2}.
\end{cases}
\end{equation*}
Therefore, using the inductive assumption \mytag{eq:reyn}{q}, we get:
\begin{itemize}
\item if $t \in I_{q+2} \setminus I_{q+1}$,
\begin{equation*}
\|R_{q+1}(t)\|_{L^1} \leq \|R_q(t)\|_{L^1} + \delta_{q+2} \leq \|R_0(t)\|_{L^1} + \delta_{q+2};
\end{equation*}
\item if $t \in [0,1] \setminus I_{q+2}$, 
\begin{equation*}
\|R_{q+1}(t)\|_{L^1} \leq \|R_q(t)\|_{L^1} \leq \|R_0(t)\|_{L^1},
\end{equation*}
\end{itemize}
from which \mytag{eq:reyn}{q+1} follows. 
\end{proof}

\bigskip

\bigskip
It is now an immediate consequence of the previous lemma that there exists 
\begin{equation}
\label{eq:regularity}
\rho \in C((0,1); L^p(\T^d)), \qquad u \in C((0,1); W^{1, \tilde p}(\T^d)) \cap C((0,1); L^{p'}(\T^d))
\end{equation}
such that for every compact subset $K \subseteq (0,1)$
\begin{equation*}
\begin{split}
\max_{t \in K} \|\rho_q(t) - \rho(t)\|_{L^p} 			& \to 0 \\
\max_{t \in K} \|u_q(t) - u(t)\|_{L^{p'}} 			& \to 0 \\
\max_{t \in K} \|u_q(t) - u(t)\|_{W^{1, \tilde p}} 	& \to 0 \\
\max_{t \in K} \|R_q(t)\|_{L^1} 						& \to 0,
\end{split}
\end{equation*}
as $q \to \infty$, from which it follows that $\rho, u$ solves \eqref{e:incompressible}-\eqref{eq:continuity} (or \eqref{e:transport}-\eqref{e:incompressible}) in the sense of distributions. This proves part (b) of the statement.

We need now the following estimate. Let $t \in (0,1)$ and let $q^* = q^*(t) \in \N$ so that $t \in I_{q^*} \setminus I_{q^* -1}$. By the inductive estimate \eqref{eq:dist:rho},
\begin{equation}
\label{eq:distance:initial:final}
\begin{split}
\|\rho(t) - \rho_0(t)\|_{L^p}		& \leq \sum_{q=1}^\infty \|\rho_q(t) - \rho_{q-1}(t)\|_{L^p} \\
									& \leq \|\rho_{q^*-1}(t) - \rho_{q^*-2}(t)\|_{L^p} + \|\rho_{q^*}(t) - \rho_{q^*-1}(t)\|_{L^p} \\
									& \quad + \sum_{q=q^*+1}^\infty \|\rho_q(t) - \rho_{q-1}(t)\|_{L^p} \\
									& \leq M \eta \Bigg[ \|R_0(t)\|^{1/p}_{L^1} + \Big( \|R_0(t)\|_{L^1} + \delta_{q^*}\Big) ^{1/p} +  \sum_{q=q^*+1}^\infty \delta_q^{1/p} \Bigg].
\end{split}
\end{equation}

Let us now prove that $\|\rho(t) - \bar \rho(0)\|_{L^p} \to 0$ as $t \to 0$. Observe that, for $t < \tilde \e$, $\rho_0(t) = \bar \rho(0)$ and $R_0(t) = 0$. Hence, if $t < \tilde \e$, 
\begin{equation*}
\begin{split}
\|\rho(t) - \bar \rho(0)\|_{L^p}	& =  \|\rho(t) - \rho_0(t)\|_{L^p} \\
\text{(by \eqref{eq:distance:initial:final})}
									& \leq M \eta \Bigg[ \|R_0(t)\|^{1/p}_{L^1} + \Big( \|R_0(t)\|_{L^1} + \delta_{q^*}\Big) ^{1/p} +  \sum_{q=q^*+1}^\infty \delta_q^{1/p} \Bigg] \\
									& \leq M \eta \sum_{q=q^*}^\infty \delta_q^{1/p},
\end{split}
\end{equation*}
and the conclusion follows observing that $q^* = q^*(t) \to \infty$ as $t \to 0$. In a similar way the limit $\|\rho(t) - \bar \rho(1)\|_{L^p} \to 0$ as $t \to 1$ can be shown. This completes the proof of parts (a) and (c) of the statement. 

Let us now prove part (d). We first observe that, for the $\e$ given in the statement of the theorem, we can choose $\tilde \e$ small enough, so that for every $t \in [0,1]$,
\begin{equation}
\| \rho_0(t) - \bar \rho(t)\|_{L^p} \leq \frac{\e}{2}.
\end{equation}
Indeed, if $t \in [2 \tilde \e, 1 - 2\tilde \e]$, then $\rho_0(t) = \bar \rho(t)$. If $t \in [0, 2\tilde \e] \cup [1 - 2 \tilde \e, 1]$, then 
\begin{equation*}
\begin{split}
\|\rho_0(t) - \bar \rho(t)\|_{L^p} 	& \leq |\phi_0(t)| \|\bar \rho(0) - \bar \rho(t)\|_{L^p} + |\phi_1(t)| \|\bar \rho(1) - \bar \rho(t)\|_{L^p}  \leq \frac{\e}{2},
\end{split}
\end{equation*}
where the last inequality follows, by choosing $\tilde \e$ sufficiently small.   Therefore, for every $t \in [0,1]$,
\begin{equation*}
\begin{split}
\|\rho(t) - \bar \rho(t)\|_{L^p} 			& \leq \|\rho(t) - \rho_0(t)\|_{L^p} + \|\rho_0(t) - \bar \rho(t)\|_{L^p} \\
\text{(by \eqref{eq:distance:initial:final})}
											& \leq M \eta \Bigg[ \|R_0(t)\|^{1/p}_{L^1} + \Big( \|R_0(t)\|_{L^1} + \delta_{q^*}\Big) ^{1/p} +  \sum_{q=q^*+1}^\infty \delta_q^{1/p} \Bigg] + \frac{\e}{2} \\
											& \leq M \eta  \max_{t \in [0,1]} \Bigg[\|R_0(t)\|^{1/p}_{L^1} + \Big( \|R_0(t)\|_{L^1} + 1 \Big) ^{1/p} +  \sum_{q=1}^\infty \delta_q^{1/p} \Bigg] + \frac{\e}{2} \\
											& \leq \e,
\end{split}
\end{equation*}
if $\eta$ is chosen small enough (depending on $R_0$ and thus on $\tilde \e$). This proves part (d) of the statement, thus concluding the proof of the theorem. \qedhere

\end{proof}

\section{The perturbations}
\label{s:perturbations}

In this and the next two sections we prove Proposition \ref{p:main}. In particular in this section we fix the constant $M$ in the statement of the proposition, we define the functions $\rho_1$ and $u_1$ and we prove some estimates on them. In Section \ref{s:reynolds} we define $R_1$ and we prove some estimates on it. In Section \ref{s:proof:prop} we conclude the proof of Proposition \ref{p:main}, by proving estimates \eqref{eq:dist:rho:stat}-\eqref{eq:reyn:stat}.

\subsection{Mikado fields and Mikado densities}

The first step towards the definition of $\rho_1, u_1$ is the construction of \emph{Mikado fields} and \emph{Mikado densities}.

We start by fixing a function $\Phi \in C^\infty_c(\R^{d-1})$ such that 
\begin{equation*}
\supp \Phi \subseteq (0,1)^{d-1}, \quad \int_{\R^{d-1}} \Phi = 0, \quad  \int_{\R^{d-1}} \Phi^2 = 1.
\end{equation*}
Let $\Phi_\mu(x) := \Phi(\mu x)$ for $\mu > 0$. Let $a \in \R$. For every $k \in \N$, it holds
\begin{equation}
\label{eq:rescale:phi}
\begin{split}
\|D^k (\mu^a \Phi_\mu)\|_{L^r(\R^{d-1})}  	& = \mu^{a+k- (d-1)/r} \|D^k\Phi\|_{L^r(\R^{d-1})}, \\
\end{split}
\end{equation}

\begin{proposition}
\label{p:mikado}
Let $a, b \in \R$ with 
\begin{equation}
\label{eq:exponent}
a+b = d-1.
\end{equation}
For every $\mu>2d$ and $j=1, \dots, d$ there exist a \emph{Mikado density} $\Theta_{\mu}^{j} : \T^d \to \R$ and a \emph{Mikado field} $W_\mu^j :\T^d \to \R^d$ with the following properties.
\begin{enumerate}[(a)]

\item It holds
\begin{equation}
\label{eq:mikado:eq}
\begin{cases}
\div W_\mu^j & = 0, \\
\div (\Theta_\mu^j W_\mu^j) & = 0, \\
\fint_{\T^d} \Theta_{\mu}^j = \fint_{\T^d} W_\mu^j & = 0, \\
\fint_{\T^d} \Theta_\mu^j W_\mu^j & = e_j, \\
\end{cases}
\end{equation}
where $\{e_j\}_{j=1,\dots,d}$ is the standard basis in $\R^d$.

\item For every $k \in \N$ and  $r \in [1,\infty]$
\begin{equation}
\label{eq:mikado:est}
\begin{split}
\|D^k \Theta_\mu^j\|_{L^r(\T^d)} 			& \leq \|\Phi\|_{L^r(\R^{d-1})} \mu^{a + k - (d-1) /r}, \\
\|D^k W_\mu^j\|_{L^{r}(\T^d)}				& \leq \|\Phi\|_{L^r(\R^{d-1})} \mu^{b + k - (d-1)/r}, \\
\end{split}
\end{equation}

\item For $j \neq k$, $\supp \Theta_\mu^j = \supp W_\mu^j$ and $\supp \Theta_\mu^j \cap \supp W_{\mu}^k = \emptyset$.

\end{enumerate}
\end{proposition}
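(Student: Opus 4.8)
The plan is to construct the Mikado fields and densities explicitly as $1/\lambda$--\emph{independent} building blocks supported in thin pipes around periodic straight lines, by lifting one--dimensional-in-the-transverse-variable profiles built from $\Phi$. I would proceed direction by direction: for each $j\in\{1,\dots,d\}$, choose the line through a suitable base point parallel to $e_j$, and let $y\in\R^{d-1}$ denote the coordinates transverse to $e_j$ (a permutation of the remaining $d-1$ coordinates of $x$). The natural ansatz is
\begin{equation*}
W_\mu^j(x) := \mu^{\,b-(d-1)/2}\,\Phi_\mu(y-y_j)\,e_j,
\qquad
\Theta_\mu^j(x) := \mu^{\,a-(d-1)/2}\,\Phi_\mu(y-y_j),
\end{equation*}
periodized over $\T^d$, where $y_j$ are chosen so that, after the $1/\mu$--rescaling of $\Phi$, the supports $\{|y-y_j|\lesssim 1/\mu\}$ lie in disjoint subcubes of $(0,1)^{d-1}$; this is possible since $\mu>2d$ gives enough room. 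Because $\Phi$ has support in $(0,1)^{d-1}$, the rescaled bumps sit inside a single period cell and extend $\Z^d$--periodically to honest functions on $\T^d$. Note $W_\mu^j$ points in the $e_j$ direction and depends only on the transverse variables, so $\div W_\mu^j=\partial_{x_j}(\cdots)=0$; likewise $\Theta_\mu^j W_\mu^j$ is a scalar multiple of $e_j$ depending only on $y$, hence also divergence free. This gives the first two lines of \eqref{eq:mikado:eq}.

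For the mean--value conditions I would use the normalizations of $\Phi$: $\int_{\R^{d-1}}\Phi=0$ forces $\fint_{\T^d}\Theta_\mu^j=\fint_{\T^d}W_\mu^j=0$ after integrating out $x_j$ (which contributes a factor $1$ since the cell has unit length in that direction) and changing variables $y\mapsto \mu y$ (which contributes $\mu^{-(d-1)}$, exactly compensating the prefactor choices up to the sign of the exponent); more precisely the product of prefactors in $\Theta_\mu^j W_\mu^j$ is $\mu^{a+b-(d-1)}=\mu^0=1$ by \eqref{eq:exponent}, and $\int_{\R^{d-1}}\Phi^2=1$ then yields $\fint_{\T^d}\Theta_\mu^j W_\mu^j=e_j$. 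The $L^r$ estimates in \eqref{eq:mikado:est} are immediate from \eqref{eq:rescale:phi} with the stated choices $\mu^{a-(d-1)/2}$ and $\mu^{b-(d-1)/2}$: differentiating $k$ times brings down $\mu^k$, and the periodization does not change the $L^r(\T^d)$ norm since the bumps are supported in a single cell. Finally, the disjointness in (c) is built into the choice of the transverse shifts $y_j$, and $\supp\Theta_\mu^j=\supp W_\mu^j$ because both are $\Phi_\mu(y-y_j)$ up to a constant.

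I do not expect a serious obstacle here — this proposition is essentially a bookkeeping lemma packaging the geometry of the Mikado/pipe construction of \cite{SzekelyhidiJr:2016tp} together with the scaling \eqref{eq:rescale:phi}. The one point requiring a little care is the simultaneous placement of the $d$ families of pipes: I must choose the base lines and transverse shifts so that for $j\neq k$ the pipe for direction $j$ and the pipe for direction $k$ are genuinely disjoint in $\T^d$ (not just disjoint transversally in their own frames), which is where the condition $\mu>2d$ enters — it guarantees each of the $d$ transverse ``slabs'' of thickness $O(1/\mu)$ can be fit into $(0,1)^{d-1}$ without overlap. Once the placement is fixed, everything else is a direct computation, and I would simply verify \eqref{eq:mikado:eq}, \eqref{eq:mikado:est}, and property (c) in turn, taking $r=\infty$ and $r\in[1,\infty)$ uniformly via \eqref{eq:rescale:phi}.
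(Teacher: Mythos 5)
Your geometric construction matches the paper's exactly: pipes along $e_j$ built from rescaled bumps of $\Phi$, periodized over $\T^d$, with translations to make the $d$ families disjoint (and $\mu>2d$ ensuring room for this). However, there is a genuine normalization error in your ansatz. You set
\begin{equation*}
\Theta_\mu^j = \mu^{a-(d-1)/2}\,\Phi_\mu(y-y_j),\qquad W_\mu^j = \mu^{b-(d-1)/2}\,\Phi_\mu(y-y_j)\,e_j,
\end{equation*}
whereas the paper takes $\mu^{a}\Phi_\mu$ and $\mu^{b}\Phi_\mu\,e_j$. With your prefactors the key mean-value identity fails: $\Theta_\mu^j W_\mu^j = \mu^{a+b-(d-1)}\Phi_\mu^2\,e_j = \Phi_\mu^2\,e_j$, and then
\begin{equation*}
\fint_{\T^d}\Theta_\mu^j W_\mu^j \;=\; \int_{(0,1)^{d-1}}\Phi(\mu(y-y_j))^2\,dy\;e_j \;=\; \mu^{-(d-1)}\!\int_{\R^{d-1}}\Phi^2\;e_j \;=\; \mu^{-(d-1)}\,e_j \;\neq\; e_j.
\end{equation*}
Your computation makes the product of prefactors $\mu^{a+b-(d-1)}=1$ do the job of cancelling the Jacobian $\mu^{-(d-1)}$, but those are two separate factors: the Jacobian appears in the integral regardless of the prefactor, so the prefactors must multiply to $\mu^{d-1}$, i.e.\ $\mu^{a}\cdot\mu^{b}$, not to $1$.

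The same shift breaks the $L^r$ bounds. By \eqref{eq:rescale:phi}, $\|D^k(\mu^{a-(d-1)/2}\Phi_\mu)\|_{L^r(\R^{d-1})} = \mu^{a-(d-1)/2+k-(d-1)/r}\|D^k\Phi\|_{L^r}$, which is $\mu^{-(d-1)/2}$ smaller than the $\mu^{a+k-(d-1)/r}$ claimed in \eqref{eq:mikado:est}; had you attempted to ``verify \eqref{eq:mikado:est} in turn'' as you say, you would have caught this. Replacing your prefactors by $\mu^{a}$ and $\mu^{b}$ fixes both issues simultaneously, and with that one change your argument (including the divergence-free checks, the periodization, and the translation-based disjointness) is the same as the paper's proof.
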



\begin{remark}
\label{rmk:choice:ab}
In particular notice that if we choose
\begin{equation*}
a = \frac{d-1}{p}, \qquad b = \frac{d-1}{p'}
\end{equation*}
and we define the constant $M$ in the statement of Proposition \ref{p:main} as
\begin{equation}
\label{eq:M}
M := 2d \max \Big\{ \|\Phi\|_{L^\infty(\R^{d-1})},  \|\Phi\|^2_{L^\infty(\R^{d-1})}, \|\nabla \Phi\|_{L^\infty(\R^{d-1})}\Big\}.
\end{equation}
then the following estimates holds:
\begin{equation}
\label{eq:mikado:est:1}
\begin{split}
\sum_{j=1}^d \|\Theta_\mu^j\|_{L^p(\T^d)}, \  \sum_{j=1}^d \|W_\mu^j\|_{L^{p'}(\T^d)}, \ \sum_{j=1}^d	\|\Theta_\mu^j W_\mu^j\|_{L^1(\T^d)}		& \leq \frac{M}{2}, \\
\end{split}
\end{equation}
and
\begin{equation}
\label{eq:mikado:est:2}
\begin{split}
\|\Theta_\mu^j\|_{L^1(\T^d)}, \ \|W_\mu^j\|_{L^{1}(\T^d)}, \  \|W_\mu^j\|_{W^{1, \tilde p}} 	
\leq M \mu^{-\gamma},
\end{split}
\end{equation}
%
%
where

\begin{equation*}
\gamma = \min \big\{ \gamma_1, \gamma_2, \gamma_3 \big\} > 0
\end{equation*}
and
\begin{equation*}
\begin{split}
\gamma_1 & := (d-1) \bigg(1 - \frac{1}{p} \bigg) > 0, \\
\gamma_2 & := (d-1) \bigg(1 - \frac{1}{p'} \bigg) > 0, \\
\gamma_3 & := - 1 - (d-1) \bigg[\frac{1}{p'} - \frac{1}{\tilde p} \bigg] = (d-1) \bigg[\frac{1}{p} + \frac{1}{\tilde p}   - \bigg( 1 + \frac{1}{d-1} \bigg) \bigg] > 0.
\end{split}
\end{equation*}
Notice that $\gamma_3 >0$ by \eqref{eq:p:ineq}. 
\end{remark}

\begin{proof}[Proof of Proposition \ref{p:mikado}]

\textit{Step 1.} For each $j=1,\dots, d$, we define the (non-periodic) Mikado density $\tilde \Theta_{\mu}^j : \R^d \to \R$
\begin{subequations}
\label{eq:non:periodic}
\begin{equation}
\label{eq:non:periodic:dens}
\tilde \Theta_\mu^j(x_1, \dots, x_n) := \mu^a \Phi_\mu(x_1, \dots, x_{j-1}, x_{j+1}, \dots, x_d)
\end{equation}
and the (non-periodic) Mikado field $\tilde W_\mu^j : \R^d \to \R^d$
\begin{equation}
\label{eq:non:periodic:field}
\tilde W_\mu^j (x_1, \dots, x_n) := \mu^b \Phi_\mu(x_1, \dots, x_{j-1}, x_{j+1}, \dots, x_d) e_j.
\end{equation}
\end{subequations}
Notice that for the non-periodic Mikado densities 
\begin{equation}
\label{eq:mikado:notper:eqn}
\begin{cases}
\div \tilde W_\mu^j & = 0, \\
\div (\tilde \Theta_\mu^j \tilde W_\mu^j) & = 0, \\
\fint_{(0,1)^d} \tilde \Theta_\mu^j = \fint_{(0,1)^d} \tilde W_\mu^j 		& = 0, \\
\fint_{(0,1)^d} \tilde \Theta_\mu^j \tilde W_\mu^j 									& = e_j, \\
\end{cases}
\end{equation}
where the last equality follows from \eqref{eq:exponent}. Moreover, from \eqref{eq:rescale:phi} we get
\begin{equation}
\label{eq:mikado:notper:est}
\begin{split}
\|D^k \tilde \Theta_\mu^j\|_{L^r((0,1)^d)} 		& = \mu^{a + k - (d-1)/r}\|D^k\Phi\|_{L^r(\R^{d-1})} \\
\|D^k \tilde W_\mu^j\|_{L^{r}((0,1)^d)} 			& = \mu^{b + k - (d-1)/r}\|D^k\Phi\|_{L^{r}(\R^{d-1})} \\
\end{split}
\end{equation}

\smallskip

\textit{Step 2.} We define $\Theta_\mu^j : \T^d \to \R$ and $W_\mu^j :\T^d \to \R^d$ as the $1$-periodic extension of $\tilde \Theta_\mu^j$, $\tilde W_\mu^j$ respectively.  Such periodic extensions are well defined, since $\supp \Phi \subseteq (0,1)^{d-1}$ and $\tilde \Theta_\mu^j$, $\tilde W_\mu^j$ does not depend on the $j$-th coordinate. Equations \eqref{eq:mikado:eq} and estimates \eqref{eq:mikado:est} come from the corresponding equations \eqref{eq:mikado:notper:eqn} and estimates \eqref{eq:mikado:notper:est} for the non-periodic Mikado densities and fields. 

\textit{Step 3.} Finally notice that conditions (c) in the statement are not verified by $\Theta_\mu^j$ and $W_\mu^j$ defined in Step 2. However we can achieve (c), using that $\mu > 2d$ and  redefining $\Theta_\mu^j, W_\mu^j$ after a suitable translation of the independent variable $x \in \T^d$ for each $j=1,\dots, d$.
\end{proof}

\subsection{Definition of the perturbations}
\label{ss:def:pert}

We are now in a position to define $\rho_1$, $u_1$. The constant $M$ has already been fixed in \eqref{eq:M}. Let thus $\eta, \delta, \sigma>0$ and $(\rho_0, u_0, R_0)$ be a smooth solution to the continuity-defect equation \eqref{eq:cont:reyn}.

Let
\begin{equation*}
\begin{split}
\lambda \in \N & \text{ ``oscillation''}, \\
\mu > 2d & \text{ ``concentration''} 
\end{split}
\end{equation*}
be two constant, which will be fixed in Section \ref{s:proof:prop}.
 Let $\psi \in C^\infty_c((0,1))$ such that $\psi \equiv 0$ on $[0,\sigma/2] \cup [1-\sigma/2, 1]$, $\psi \equiv 1$ on $[\sigma, 1 - \sigma]$ and $|\psi| \leq 1$. We denote by $R_{0,j}$ the components of $R_0$, i.e.
\begin{equation*}
R_0(t,x) := \sum_{j=1}^d R_{0,j}(t,x) e_j. 
\end{equation*}
For $j=1,\dots, d$, let $\chi_j \in C^\infty ([0,1] \times \T^d)$ be such that
\begin{equation*}
\chi_j(t,x) =
\begin{cases}
0, & \text{if } |R_{0,j}(t,x)| \leq \delta/(4d), \\
1, & \text{if } |R_{0,j}(t,x)| \geq \delta/(2d),
\end{cases}
\end{equation*}
%
and $|\chi_j| \leq 1$. 

\bigskip

We set
\begin{equation*}
\rho_1 := \rho_0 + \vartheta + \vartheta_c , \qquad u_1 :=  u_0 + w + w_c,
\end{equation*}
where $\vartheta, \vartheta_c, w, w_c$ are defined as follows. First of all, let $\Theta_\mu^j$, $W_\mu^j$, $j=1,\dots, d$, be the Mikado densities and flows provided by Proposition \ref{p:mikado}, with $a,b$ chosen as in Remark \ref{rmk:choice:ab}. We set
\begin{equation}
\label{eq:perturbation}
\begin{split}
\vartheta (t,x) 	& := \eta \sum_{j=1}^d \psi(t) \chi_j(t,x) \sign \big( R_{0,j}(t,x) \big) \big|R_{0,j}(t,x)\big|^{1/p} \Theta_\mu^j(\lambda x), \\
\vartheta_c(t)		& := - \fint_{\T^d} \vartheta(t,x) dx, \\
w(t,x) 				& := \eta^{-1}\sum_{j=1}^d \psi(t) \chi_j(t,x) \big|R_{0,j}(t,x)\big|^{1/p'} W_\mu^j(\lambda x).
\end{split}
\end{equation}
We will also use the shorter notation
\begin{equation*}
\begin{split}
\vartheta(t) 	& = \eta \sum_{j=1}^d \psi(t) \chi_j(t) \sign(R_{0,j}(t)) |R_{0,j}(t)|^{1/p} \big( \Theta_\mu^j \big)_\lambda, \\
w(t)			& = \eta^{-1} \sum_{j=1}^d \psi(t) \chi_j(t) |R_{0,j}(t)|^{1/p'} \big( W_\mu^j \big)_\lambda,
\end{split}
\end{equation*}
where, coherent with \eqref{eq:periodic}, 
\begin{equation*}
\big( \Theta_\mu^j \big)_\lambda(x) = \Theta_\mu^j (\lambda x), \qquad \big( W_\mu^j \big)_\lambda(x) = W_\mu^j (\lambda x).
\end{equation*}
Notice that $\vartheta_0$ and $w$ are smooth functions, thanks to the cutoffs $\chi_j$. Notice also that $\vartheta + \vartheta_c$ has zero mean value in $\T^d$ at each time $t$. To define $w_c$, notice first that
\begin{equation*}
- \div  w(t)  = - \eta^{-1} \sum_{j=1}^d \nabla \Big( \psi(t) \chi_j(t) \big|R_{0,j}(t)\big|^{1/p} \Big) \cdot \big(W_\mu^j \big)_\lambda
\end{equation*}
is  sum of terms of the form $f \cdot g_\lambda$, each term has zero mean value (being a divergence) and the fast oscillatory term $W_\mu^j$ has zero mean value as well. We can therefore apply Lemma \ref{l:improved:antidiv} and define
\begin{equation}
\label{eq:wc}
w_c(t) := - \eta^{-1} \sum_{j=1}^d \mathcal R \bigg( \nabla \Big( \psi(t) \chi_j(t) \big|R_{0,j}(t)\big|^{1/p} \Big) \cdot \big(W_\mu^j \big)_\lambda \bigg). 
\end{equation}
Then $\div (w + w_c) = 0$ and thus
\begin{equation*}
\div u_1 = \div u_ 0 + \div(w + w_c) = 0.
\end{equation*}
Moreover, by Remark \ref{rmk:time:as:param}, $w_c$ is smooth in $(t,x)$.

\subsection{Estimates on the perturbation}\label{ss:estimates-perturbation}

In this section we provide some estimates on $\vartheta$, $\vartheta_c$, $w$, $w_c$.
%
%

\begin{lemma}[$L^p$-norm of $\vartheta$]
\label{l:lp:vartheta}
For every time $t \in [0,1]$,
\begin{equation*}
\|\vartheta(t)\|_{L^p(\T^d)} \leq \frac{M}{2} \eta \|R_0(t)\|^{1/p}_{L^1(\T^d)} +  \frac{C(\eta, \delta, \|R_0(t)\|_{C^1(\T^d)})}{\lambda^{1/p}}.
\end{equation*}
%
\end{lemma}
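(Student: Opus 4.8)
The plan is to expand $\vartheta(t)$ according to its definition \eqref{eq:perturbation} as a sum over $j$ of terms of the form $f_j \cdot (\Theta_\mu^j)_\lambda$, where $f_j(t,x) := \eta\,\psi(t)\chi_j(t,x)\sign(R_{0,j}(t,x))|R_{0,j}(t,x)|^{1/p}$ is a smooth function (the cutoff $\chi_j$ makes the fractional power $|R_{0,j}|^{1/p}$ smooth away from the zero set of $R_{0,j}$, and $\chi_j$ vanishes there), and then apply the improved Hölder inequality of Lemma \ref{l:improved:holder} term by term.

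First I would estimate each summand. Since the Mikado densities $\Theta_\mu^j$ have pairwise disjoint supports after the translation in Step 3 of Proposition \ref{p:mikado}, and the function $x \mapsto \Theta_\mu^j(\lambda x)$ is $(1/\lambda)$-periodic, Lemma \ref{l:improved:holder} (applied with $g = \Theta_\mu^j$, whose rescaling $(\Theta_\mu^j)_\lambda$ plays the role of $g_\lambda$) gives
\begin{equation*}
\big\|f_j\,(\Theta_\mu^j)_\lambda\big\|_{L^p} \leq \|f_j\|_{L^p}\,\|\Theta_\mu^j\|_{L^p} + \frac{C_p}{\lambda^{1/p}}\,\|f_j\|_{C^1}\,\|\Theta_\mu^j\|_{L^p}.
\end{equation*}
For the main term I would bound $\|\Theta_\mu^j\|_{L^p(\T^d)} \le \|\Phi\|_{L^p(\R^{d-1})}\mu^{a-(d-1)/p} = \|\Phi\|_{L^p}$ by the choice $a = (d-1)/p$ from Remark \ref{rmk:choice:ab} (so the $\mu$-power vanishes), and then use $\|f_j\|_{L^p} \le \eta\||R_{0,j}|^{1/p}\|_{L^p} = \eta\|R_{0,j}\|_{L^1}^{1/p} \le \eta\|R_0(t)\|_{L^1}^{1/p}$ since $|\psi|,|\chi_j|\le 1$. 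Summing over $j=1,\dots,d$ and using $\sum_j \|\Theta_\mu^j\|_{L^p} \le M/2$ from \eqref{eq:mikado:est:1} (or directly $d\|\Phi\|_{L^p}$ with the bound absorbed into $M$) produces the leading term $\tfrac{M}{2}\eta\|R_0(t)\|_{L^1}^{1/p}$. Here one should be slightly careful: $\sum_j\|R_{0,j}\|_{L^1}^{1/p}$ versus $\|R_0\|_{L^1}^{1/p}$ — since $p>1$ and there are finitely many $j$, this only costs a dimensional constant, which I fold into $M$ together with $\|\Phi\|_{L^p}$ via the definition \eqref{eq:M}; alternatively one uses disjointness of supports so that the $L^p$ norms genuinely add in the $p$-th power.

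For the error term, $\|f_j\|_{C^1}$ is controlled by a constant depending on $\eta$, $\delta$ (through the cutoff $\chi_j$, whose $C^1$ norm blows up like a power of $\delta^{-1}$), and $\|R_0(t)\|_{C^1(\T^d)}$ — the fractional power $|R_{0,j}|^{1/p}$ differentiated once is harmless on the region $|R_{0,j}|\ge \delta/(4d)$ where $\chi_j$ is supported — and again $\|\Theta_\mu^j\|_{L^p} \le \|\Phi\|_{L^p}$ is bounded independently of $\mu$, so the whole error term is $C(\eta,\delta,\|R_0(t)\|_{C^1})\lambda^{-1/p}$, giving the claimed inequality. The only mild obstacle is bookkeeping the constant: one must check that the leading constant is genuinely $M/2$ (not a larger multiple of $M$), which is why $M$ in \eqref{eq:M} carries the factor $2d$ and the $\|\Phi\|_{L^\infty}$-type quantities — combined with the disjoint-support refinement this is exactly what makes the $M/2$ bound hold. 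There is no deep difficulty here; this lemma is the first and easiest of the perturbation estimates, its role being simply to record the size of $\vartheta$ in preparation for \eqref{eq:dist:rho:stat}.
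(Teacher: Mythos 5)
Your proof is correct and matches the paper's argument: both split $\vartheta$ into terms $f_j\,(\Theta_\mu^j)_\lambda$, apply Lemma \ref{l:improved:holder} termwise, bound $\|f_j\|_{L^p}\le\eta\|R_0(t)\|_{L^1}^{1/p}$, use $\sum_j\|\Theta_\mu^j\|_{L^p}\le M/2$ from \eqref{eq:mikado:est:1} for the leading term, and absorb $\|f_j\|_{C^1}$ into a $(\eta,\delta,\|R_0(t)\|_{C^1})$-dependent constant for the error. Your side worry about $\sum_j\|R_{0,j}\|_{L^1}^{1/p}$ versus $\|R_0\|_{L^1}^{1/p}$ is unnecessary: since you already bound each $\|R_{0,j}\|_{L^1}\le\|R_0\|_{L^1}$ before summing, the factor $\|R_0(t)\|_{L^1}^{1/p}$ pulls out of the sum over $j$ and the $M/2$ comes directly from the Mikado bound with no extra dimensional factor.
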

\begin{proof}
The perturbation $\vartheta$ is the sum of functions of the form $f g_\lambda$. Therefore we can apply the improved H\"older inequality, Lemma \ref{l:improved:holder}, to get
\begin{equation*}
\begin{split}
\|\vartheta(t)\|_{L^p} 
& \leq \eta \sum_{j=1}^d \bigg\| \psi(t)\chi_j(t) \sign \big( R_{0,j}(t) \big) \big|R_{0,j}(t)\big|^{1/p} \bigg\|_{L^p} \|\Theta_\mu^j\|_{L^p} \\
& \qquad +  \frac{C_p}{\lambda^{1/p}} \Big\| \psi(t) \chi_j(t) \sign ( R_{0,j}(t)) \big|R_{0,j}(t)\big|^{1/p} \Big\|_{C^1(\T^d)} \|\Theta_\mu^j\|_{L^p}.
\end{split}
\end{equation*}
Notice now that
\begin{equation*}
\bigg\|\psi(t) \chi_j(t) \sign \Big( R_{0,j}(t) \Big) \big|R_{0,j}(t)\big|^{1/p} \bigg\|_{L^p} \leq \Big\| \big|R_{0,j}(t)\big|^{1/p} \Big\|_{L^p} \leq \|R_0(t)\|_{L^1}^{1/p}
\end{equation*}
and, recalling the definition of the cutoff $\chi_j$ in Section \ref{ss:def:pert},
\begin{equation*}
\Big\| \psi(t) \chi_j(t) \sign ( R_{0,j}(t)) \big|R_{0,j}(t)\big|^{1/p} \Big\|_{C^1(\T^d)}
\leq C\big(\delta, \|R_0(t)\|_{C^1(\T^d)} \big).
\end{equation*}
Therefore, using the bounds on $\|\Theta_\mu^j\|_{L^p}$ provided in \eqref{eq:mikado:est:1}, we get
\begin{equation*}
\|\vartheta(t)\|_{L^p} \leq \frac{M}{2}\eta  \|R_0(t)\|_{L^1}^{1/p} + \frac{C(\eta, \delta, \|R_0(t)\|_{C^1})}{\lambda^{1/p}}.
\end{equation*}

\end{proof}

\begin{lemma}[Estimate on $\vartheta_c$]
\label{l:vartheta:c}
It holds
\begin{equation*}
|\vartheta_c(t)| \leq \frac{C(\eta, \|R_0(t)\|_{C^1(\T^d)})}{\lambda}.
\end{equation*}
\end{lemma}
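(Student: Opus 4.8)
The plan is to recognize $\vartheta_c(t)$ as (minus) the spatial average of a sum of functions of the product type $f(x)g(\lambda x)$ with $\fint_{\T^d} g = 0$, and then apply the fast-oscillation estimate of Lemma~\ref{l:mean:value} term by term. Concretely, write
\begin{equation*}
\vartheta_c(t) = -\fint_{\T^d}\vartheta(t,x)\,dx = -\eta\sum_{j=1}^d \fint_{\T^d} f_j(t,x)\,\Theta_\mu^j(\lambda x)\,dx, \qquad f_j(t,x) := \psi(t)\chi_j(t,x)\sign\bigl(R_{0,j}(t,x)\bigr)\bigl|R_{0,j}(t,x)\bigr|^{1/p}.
\end{equation*}
By the third line of \eqref{eq:mikado:eq} each Mikado density $\Theta_\mu^j$ has zero mean on $\T^d$, so Lemma~\ref{l:mean:value} applies at each fixed $t$ and gives, for every $j$,
\begin{equation*}
\biggl|\fint_{\T^d} f_j(t,x)\,\Theta_\mu^j(\lambda x)\,dx\biggr| \leq \frac{\sqrt{d}\,\|f_j(t)\|_{C^1(\T^d)}\,\|\Theta_\mu^j\|_{L^1(\T^d)}}{\lambda}.
\end{equation*}

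It then remains to bound the two factors on the right. For $\|f_j(t)\|_{C^1(\T^d)}$ the only potential issue is that $\tau\mapsto\sign(\tau)|\tau|^{1/p}$ is not differentiable at $\tau=0$; this is exactly why the cutoff $\chi_j$ was built into the definition of $\vartheta$ in Section~\ref{ss:def:pert}, forcing $f_j$ to vanish where $|R_{0,j}|\leq\delta/(4d)$. On the region where $\chi_j\not\equiv0$ one has $|R_{0,j}|\geq\delta/(4d)$, so the chain rule yields $\|f_j(t)\|_{C^1(\T^d)}\leq C\bigl(\delta,\|R_0(t)\|_{C^1(\T^d)}\bigr)$ — the same elementary estimate already used in the proof of Lemma~\ref{l:lp:vartheta}. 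For the second factor, \eqref{eq:mikado:est} with $k=0$, $r=1$ gives $\|\Theta_\mu^j\|_{L^1(\T^d)}\leq\|\Phi\|_{L^1(\R^{d-1})}\mu^{a-(d-1)} = \|\Phi\|_{L^1(\R^{d-1})}\mu^{-b}$, which, since $b=(d-1)/p'>0$ and $\mu>2d$, is bounded by the absolute constant $\|\Phi\|_{L^1(\R^{d-1})}$ (equivalently one may quote the bound $\|\Theta_\mu^j\|_{L^1}\leq M\mu^{-\gamma}$ from \eqref{eq:mikado:est:2}).

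Combining these bounds and summing over $j=1,\dots,d$ produces
\begin{equation*}
|\vartheta_c(t)| \leq \eta\sum_{j=1}^d \frac{\sqrt{d}\,\|f_j(t)\|_{C^1(\T^d)}\,\|\Theta_\mu^j\|_{L^1(\T^d)}}{\lambda} \leq \frac{C\bigl(\eta,\delta,\|R_0(t)\|_{C^1(\T^d)}\bigr)}{\lambda},
\end{equation*}
which is the claimed estimate (the dependence on the fixed parameter $\delta$ being harmless and suppressed in the statement). I do not anticipate a genuine obstacle here: the argument is a direct application of Lemma~\ref{l:mean:value}, and the only point requiring a word of care is the smoothness/$C^1$-control of $f_j$, handled by the cutoffs $\chi_j$ exactly as in Lemma~\ref{l:lp:vartheta}.
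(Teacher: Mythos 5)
Your proof is correct and takes essentially the same route as the paper: both apply Lemma~\ref{l:mean:value} term by term to the oscillatory products $f_j(t,x)\,\Theta_\mu^j(\lambda x)$ (using $\fint_{\T^d}\Theta_\mu^j=0$) and then invoke the bound on $\|\Theta_\mu^j\|_{L^1}$. You are in fact slightly more careful than the paper in making explicit the $\delta$-dependence of the $C^1$-bound on $f_j$ coming from the cutoff $\chi_j$, which the paper's notation $C(\eta,\|R_0(t)\|_{C^1})$ quietly suppresses.
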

\begin{proof}
We use Lemma \ref{l:mean:value}:
\begin{equation*}
\begin{split}
|\vartheta_c(t)| 	& \leq \eta \sum_{j=1}^d \frac{\sqrt{d} \|R_0(t)\|_{C^1(\T^d)} \|\Theta_\mu^j\|_{L^1}}{\lambda} \\
					& \leq \frac{C(\eta, \|R_0(t)\|_{C^1(\T^d)})}{\lambda}.
\end{split}
\end{equation*}
\end{proof}
%

\begin{lemma}[$L^{p'}$ norm of $w$]
\label{l:lp:w}
For every time $t \in [0,1]$,
\begin{equation*}
\|w(t)\|_{L^{p'}(\T^d)} \leq \frac{M}{2\eta} \|R_0(t)\|^{1/p'}_{L^1(\T^d)} +  \frac{C(\eta, \delta, \|R_0(t)\|_{C^1(\T^d)})}{\lambda^{1/p'}}.
\end{equation*}
\end{lemma}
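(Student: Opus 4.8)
The plan is to mimic verbatim the argument used for Lemma \ref{l:lp:vartheta}, since $w(t)$ has exactly the same structure as $\vartheta(t)$, only with $\Theta_\mu^j$ replaced by $W_\mu^j$, the exponent $p$ replaced by $p'$, and the prefactor $\eta$ replaced by $\eta^{-1}$. First I would record that
\begin{equation*}
w(t) = \eta^{-1} \sum_{j=1}^d \psi(t) \chi_j(t) |R_{0,j}(t)|^{1/p'} \big( W_\mu^j \big)_\lambda
\end{equation*}
is a finite sum of functions of the form $f g_\lambda$ with $f = \psi(t)\chi_j(t)|R_{0,j}(t)|^{1/p'}$ and $g = W_\mu^j$, so that the improved H\"older inequality \eqref{eq:improved:holder:2} applies termwise with exponent $p'$, giving
\begin{equation*}
\|w(t)\|_{L^{p'}} \leq \eta^{-1}\sum_{j=1}^d \Big[ \big\|\psi(t)\chi_j(t)|R_{0,j}(t)|^{1/p'}\big\|_{L^{p'}} + \frac{C_{p'}}{\lambda^{1/p'}} \big\|\psi(t)\chi_j(t)|R_{0,j}(t)|^{1/p'}\big\|_{C^1(\T^d)} \Big] \|W_\mu^j\|_{L^{p'}}.
\end{equation*}

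Next I would estimate the two scalar factors. For the $L^{p'}$ factor, since $|\psi|, |\chi_j| \leq 1$ one has $\big\|\psi(t)\chi_j(t)|R_{0,j}(t)|^{1/p'}\big\|_{L^{p'}} \leq \big\||R_{0,j}(t)|^{1/p'}\big\|_{L^{p'}} = \|R_{0,j}(t)\|_{L^1}^{1/p'} \leq \|R_0(t)\|_{L^1}^{1/p'}$. For the $C^1$ factor I would invoke the cutoff $\chi_j$ from Section \ref{ss:def:pert}: on the support of $\chi_j$ we have $|R_{0,j}(t,x)| \geq \delta/(4d)$, so $|R_{0,j}|^{1/p'}$ is smooth there with first derivatives controlled in terms of $\delta$ and $\|R_0(t)\|_{C^1(\T^d)}$; hence $\big\|\psi(t)\chi_j(t)|R_{0,j}(t)|^{1/p'}\big\|_{C^1(\T^d)} \leq C(\delta, \|R_0(t)\|_{C^1(\T^d)})$.

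Finally I would plug in the bound $\sum_{j=1}^d \|W_\mu^j\|_{L^{p'}(\T^d)} \leq M/2$ from \eqref{eq:mikado:est:1} to control the oscillatory factors, absorb the constant $\eta^{-1}$ and all the $j$-dependent constants into the notation $C(\eta, \delta, \|R_0(t)\|_{C^1(\T^d)})$, and arrive at
\begin{equation*}
\|w(t)\|_{L^{p'}(\T^d)} \leq \frac{M}{2\eta}\|R_0(t)\|_{L^1(\T^d)}^{1/p'} + \frac{C(\eta,\delta,\|R_0(t)\|_{C^1(\T^d)})}{\lambda^{1/p'}}.
\end{equation*}
I do not expect any genuine obstacle here; the only point requiring a little care is the $C^1$ estimate on $|R_{0,j}|^{1/p'}$, which is not Lipschitz near the zero set of $R_{0,j}$ — but this is precisely what the cutoff $\chi_j$ was introduced to handle, exactly as in the proof of Lemma \ref{l:lp:vartheta}.
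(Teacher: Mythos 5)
Your proposal is correct and is exactly the route the paper takes: the paper's proof of this lemma consists of the one-line remark that it is ``completely analogous to the proof of Lemma \ref{l:lp:vartheta}, with $\eta^{-1}$ instead of $\eta$ and $\|W_\mu^j\|_{L^{p'}}$ instead of $\|\Theta_\mu^j\|_{L^p}$,'' which is precisely what you have carried out in detail, including the correct handling of the cutoff $\chi_j$ to control the $C^1$ norm of $|R_{0,j}|^{1/p'}$.
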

\begin{proof}
The proof is completely analogous to the proof of Lemma \ref{l:lp:vartheta}, with $\eta^{-1}$ instead of $\eta$ and $\|W_\mu^j\|_{L^{p'}}$ instead of $\|\Theta_\mu^j\|_{L^p}$, and thus it is omitted.  
\end{proof}

\begin{lemma}[$W^{1,\tilde p}$ norm of $w$]
\label{l:w1p:w}
For every time $t \in [0,1]$, 
\begin{equation*}
\|w(t)\|_{W^{1, \tilde p}(\T^d)} \leq C \Big(\eta,  \|R_0\|_{C^1} \Big) \lambda \mu^{-\gamma}.
\end{equation*}
\end{lemma}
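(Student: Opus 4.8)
The plan is to handle $w(t,\cdot)$ term by term and reduce the whole estimate to the rescaling identity $\|D^k g_\lambda\|_{L^p}=\lambda^k\|D^k g\|_{L^p}$ together with the collected Mikado bound \eqref{eq:mikado:est:2}. Writing $f_j(t,x):=\eta^{-1}\psi(t)\chi_j(t,x)|R_{0,j}(t,x)|^{1/p'}$, we have $w(t,\cdot)=\sum_{j=1}^d f_j(t,\cdot)\,(W_\mu^j)_\lambda$. As a first step I would record, exactly as in the proof of Lemma \ref{l:lp:vartheta}, that each $f_j(t,\cdot)$ is smooth: the function $|R_{0,j}|^{1/p'}$ can fail to be smooth only where $R_{0,j}=0$, and there $\chi_j\equiv0$; moreover, since $|R_{0,j}|\ge\delta/(4d)$ on $\supp\chi_j$, its $C^1$ norm is bounded, $\|f_j(t,\cdot)\|_{C^1(\T^d)}\le C(\eta,\delta,\|R_0(t)\|_{C^1(\T^d)})$, with the constant independent of $\lambda$ and $\mu$.

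Next I would differentiate by Leibniz. For each $j$,
\[
\nabla\big(f_j\,(W_\mu^j)_\lambda\big)=(\nabla f_j)\,(W_\mu^j)_\lambda+\lambda\, f_j\,(\nabla W_\mu^j)_\lambda,
\]
the factor $\lambda$ coming from $\nabla_x[W_\mu^j(\lambda x)]=\lambda(\nabla W_\mu^j)(\lambda x)$. Estimating each product by H\"older, placing $f_j$ and $\nabla f_j$ in $L^\infty$ and $(W_\mu^j)_\lambda$, $(\nabla W_\mu^j)_\lambda$ in $L^{\tilde p}$, and using that $\lambda\in\N$ implies $\|(W_\mu^j)_\lambda\|_{L^{\tilde p}(\T^d)}=\|W_\mu^j\|_{L^{\tilde p}(\T^d)}$ and $\|(\nabla W_\mu^j)_\lambda\|_{L^{\tilde p}(\T^d)}=\|\nabla W_\mu^j\|_{L^{\tilde p}(\T^d)}$, one obtains
\[
\|w(t)\|_{W^{1,\tilde p}(\T^d)}\le\sum_{j=1}^d\|f_j(t,\cdot)\|_{C^1(\T^d)}\Big(2\|W_\mu^j\|_{L^{\tilde p}(\T^d)}+\lambda\|\nabla W_\mu^j\|_{L^{\tilde p}(\T^d)}\Big).
\]

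To close, I would invoke \eqref{eq:mikado:est:2}: since $\|W_\mu^j\|_{W^{1,\tilde p}}\le M\mu^{-\gamma}$, both $\|W_\mu^j\|_{L^{\tilde p}}$ and $\|\nabla W_\mu^j\|_{L^{\tilde p}}$ are $\le M\mu^{-\gamma}$; as $\lambda\ge1$ the bracket is $\le 3\lambda M\mu^{-\gamma}$, and feeding in the $C^1$ bound on $f_j$ gives $\|w(t)\|_{W^{1,\tilde p}(\T^d)}\le C(\eta,\|R_0\|_{C^1})\,\lambda\mu^{-\gamma}$ (the bound on $\|f_j\|_{C^1}$ also involves the fixed parameter $\delta$, but this is harmless, as $\delta$ is chosen before $\lambda,\mu$ in Section \ref{s:proof:prop}).

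The argument is essentially bookkeeping; the only point needing a little care is, as in Lemma \ref{l:lp:vartheta}, checking that the cutoffs $\chi_j$ make $f_j$ smooth and produce a $C^1$ bound free of $\lambda$ and $\mu$. I do not expect a genuine obstacle here.
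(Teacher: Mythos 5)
Your proposal is correct and follows essentially the same route as the paper: differentiate $w$ by the Leibniz rule (picking up the factor $\lambda$ from the chain rule), place the slowly varying coefficients $f_j=\eta^{-1}\psi\chi_j|R_{0,j}|^{1/p'}$ in $L^\infty$ with a $C^1$-bound supplied by the cutoffs, use the periodicity $\|(D^k W_\mu^j)_\lambda\|_{L^{\tilde p}}=\|D^k W_\mu^j\|_{L^{\tilde p}}$, and close with \eqref{eq:mikado:est:2}. The paper also hides $\delta$ inside the constant, so your remark about that is consistent with what is actually proved.
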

\begin{proof}
We have
\begin{equation*}
Dw(t,x) = \eta^{-1} \psi(t) \sum_{j=1}^d   W_\mu^j (\lambda x) \otimes D \Big( \chi_j |R_{0,j}|^{1/p'} \Big)
+ \lambda \chi_j |R_{0,j}|^{1/p'} D W_\mu^j(\lambda x),
\end{equation*}
from which we get the pointwise estimate
\begin{equation*}
|Dw(t,x)| \leq C\big(\eta, \delta, \|R_0\|_{C^1} \big) \sum_{j=1}^d \bigg(|W_\mu^j(\lambda x)| + \lambda |D W_\mu^j(\lambda x)| \bigg).
\end{equation*}
We can take now the $L^{\tilde p}$ norm of $Dw(t)$ and use \eqref{eq:mikado:est:2} to get
\begin{equation*}
\begin{split}
\|Dw(t)\|_{L^{\tilde p}} 
	& \leq C \big(\eta, \delta, \|R_0\|_{C^1} \big) \sum_{j=1}^d \bigg( \|W_\mu^j\|_{L^{\tilde p}} + \lambda \|DW_\mu^j\|_{L^{\tilde p}} \bigg) \\
	& \leq C \big(\eta, \delta, \|R_0\|_{C^1} \big) \lambda \mu^{-\gamma}.
\end{split}
\end{equation*}
A similar (and even easier) computation holds for $\|w(t)\|_{L^{\tilde p}}$, thus concluding the proof of the lemma. 
\end{proof}

\begin{lemma}[$L^{p'}$ norm of $w_c$]
\label{l:lp:wc}
For every time $t \in [0,1]$, 
\begin{equation*}
\|w_c(t)\|_{L^{p'}(\T^d)} \leq \frac{C (\eta, \delta,  \|R_0\|_{C^2} )}{\lambda}.
\end{equation*}
\end{lemma}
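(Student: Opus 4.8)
The plan is to read the bound off directly from the improved antidivergence estimate \eqref{eq:improved:antidiv}. Recall from \eqref{eq:wc} that $w_c(t)$ is a finite sum (over $j=1,\dots,d$) of terms of the form $\mathcal R\bigl(f_j(t,\cdot)\cdot (W_\mu^j)_\lambda\bigr)$, where $f_j(t,\cdot) := -\eta^{-1}\nabla\bigl(\psi(t)\chi_j(t,\cdot)|R_{0,j}(t,\cdot)|^{1/p'}\bigr)$ is a smooth vector field on $\T^d$ and $(W_\mu^j)_\lambda(x)=W_\mu^j(\lambda x)$. Since $\fint_{\T^d}(W_\mu^j)_\lambda = \fint_{\T^d}W_\mu^j = 0$ by \eqref{eq:mikado:eq} and each summand is a divergence (hence mean-free), the hypotheses of Lemma \ref{l:improved:antidiv}, in the vector-valued form stated in the remark following it, are met at every fixed $t$.

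First I would apply \eqref{eq:improved:antidiv} with $k=0$ and with the exponent $p$ there replaced by $p'$, which yields for each $j$
\begin{equation*}
\bigl\|\mathcal R\bigl(f_j(t,\cdot)\cdot(W_\mu^j)_\lambda\bigr)\bigr\|_{L^{p'}(\T^d)} \le \frac{C_{0,p'}}{\lambda}\,\|f_j(t,\cdot)\|_{C^1(\T^d)}\,\|W_\mu^j\|_{L^{p'}(\T^d)}.
\end{equation*}
This is where the crucial gain of the factor $\lambda^{-1}$ comes from, and it is the only place the product structure $f\cdot g_\lambda$ is used.

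Next I would estimate the two factors on the right separately. For the oscillatory factor, the choice $b=(d-1)/p'$ made in Remark \ref{rmk:choice:ab} makes $\|W_\mu^j\|_{L^{p'}(\T^d)}$ independent of $\mu$; indeed $\sum_j\|W_\mu^j\|_{L^{p'}(\T^d)}\le M/2$ by \eqref{eq:mikado:est:1}. For the coefficient, note that $\|f_j(t,\cdot)\|_{C^1}$ involves at most two spatial derivatives of $\psi\chi_j|R_{0,j}(t,\cdot)|^{1/p'}$; on the support of $\chi_j$ one has $|R_{0,j}(t,\cdot)|\ge \delta/(4d)$, so $|R_{0,j}(t,\cdot)|^{1/p'}$ and its first two derivatives are controlled in terms of $\delta$ and $\|R_0(t,\cdot)\|_{C^2(\T^d)}$, while $\chi_j$ itself has $C^2$-norm bounded by a constant depending on $\delta$ and $\|R_0\|_{C^2}$. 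Hence $\|f_j(t,\cdot)\|_{C^1(\T^d)}\le \eta^{-1}C(\delta,\|R_0\|_{C^2})$. Combining the three displays and summing over $j$ gives the claimed bound $\|w_c(t)\|_{L^{p'}(\T^d)}\le C(\eta,\delta,\|R_0\|_{C^2})/\lambda$.

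There is no serious obstacle here: the argument follows the same schema as Lemmas \ref{l:lp:vartheta}--\ref{l:lp:w}, with the improved antidivergence operator $\mathcal R$ playing the role that the improved H\"older inequality played there. The only point requiring a little care is the $C^1$-bound on the coefficient $f_j$, namely checking that the cutoff $\chi_j$ indeed keeps $|R_{0,j}|$ bounded away from zero on its support, so that the power $|R_{0,j}|^{1/p'}$ is smooth with derivatives controlled by $\|R_0\|_{C^2}$; this is precisely why $\|R_0\|_{C^2}$, rather than $\|R_0\|_{C^1}$, appears in the statement.
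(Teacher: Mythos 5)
Your proof is correct and follows the paper's argument exactly: apply the improved antidivergence bound \eqref{eq:improved:antidiv} with $k=0$ at exponent $p'$, then control $\|\nabla(\psi\chi_j|R_{0,j}|^{1/p'})\|_{C^1}$ using the cutoff $\chi_j$ (which is why $\|R_0\|_{C^2}$ appears) and bound $\|W_\mu^j\|_{L^{p'}}$ via \eqref{eq:mikado:est:1}. Your extra remarks about the role of $\chi_j$ in keeping $|R_{0,j}|$ bounded away from zero are accurate, as is your consistent use of the exponent $1/p'$ (the paper's display \eqref{eq:wc} has a typo writing $1/p$).
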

\begin{proof}
The corrector $w_c$ is defined in \eqref{eq:wc} using the antidivergence operator of Lemma \ref{l:improved:antidiv}. We can thus use the bounds given by that lemma, with $k=0$, to get
\begin{equation*}
\begin{split}
\|w_c(t)\|_{L^{p'}} 
		& \leq \eta^{-1} \sum_{j=1}^d \frac{C_{0,p'}}{\lambda} \Big\| \nabla \big( \psi(t) \chi_j(t) |R_{0,j}(t)|^{1/p'} \big) \Big\|_{C^1(\T^d)} \big\|W_\mu^j\big\|_{L^{p'}} \\
		& \leq \frac{C(\eta, \delta, \|R_0\|_{C^2})}{\lambda} \sum_{j=1}^d \|W_\mu^j\|_{L^{p'}} \\
\text{(by \eqref{eq:mikado:est:1})}
		& \leq \frac{C(\eta, \delta, \|R_0\|_{C^2})}{\lambda}.
\end{split}
\end{equation*}

\end{proof}

\begin{lemma}[$W^{1,\tilde p}$ norm of $w_c$]
For every time $t \in [0,1]$, 
\begin{equation*}
\|w_c(t)\|_{W^{1, \tilde p}(\T^d)} \leq C \Big(\eta, \|R_0\|_{C^3} \Big) \mu^{-\gamma}.
\end{equation*}
\end{lemma}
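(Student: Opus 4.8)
The plan is to bound $w_c(t)$ and its spatial gradient separately in $L^{\tilde p}(\T^d)$, in both cases feeding the representation \eqref{eq:wc} into the improved antidivergence estimate \eqref{eq:improved:antidiv}. Write $f_j(t,\cdot):=\psi(t)\chi_j(t,\cdot)\,|R_{0,j}(t,\cdot)|^{1/p'}$, so that $w_c(t)=-\eta^{-1}\sum_{j=1}^d \mathcal R\bigl(\nabla f_j(t,\cdot)\cdot (W_\mu^j)_\lambda\bigr)$. The first (routine) observation I would record is that, thanks to the cutoff $\chi_j$ — which vanishes on the set $\{|R_{0,j}|\le \delta/(4d)\}$ where $s\mapsto |s|^{1/p'}$ fails to be smooth — the function $f_j(t,\cdot)$ is smooth, and for every $k\in\N$ one has $\|\nabla f_j(t,\cdot)\|_{C^k(\T^d)}\le C\bigl(\delta,\|R_0(t)\|_{C^{k+1}(\T^d)}\bigr)$.

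For the gradient I would apply \eqref{eq:improved:antidiv} with $k=1$ and exponent $\tilde p$, taking the role of $f$ in Lemma \ref{l:improved:antidiv} to be $\nabla f_j$ and that of $g$ to be $W_\mu^j$ (using the vector-valued version from the remark after that lemma). Since the prefactor is $\lambda^{k-1}=\lambda^0=1$, this yields
\[
\|Dw_c(t)\|_{L^{\tilde p}}\le C_{1,\tilde p}\,\eta^{-1}\sum_{j=1}^d \|\nabla f_j(t,\cdot)\|_{C^2}\,\|W_\mu^j\|_{W^{1,\tilde p}}\le C\bigl(\eta,\|R_0\|_{C^3}\bigr)\sum_{j=1}^d \|W_\mu^j\|_{W^{1,\tilde p}},
\]
and the bound \eqref{eq:mikado:est:2} on $\|W_\mu^j\|_{W^{1,\tilde p}}$ turns the right-hand side into $C\bigl(\eta,\|R_0\|_{C^3}\bigr)\mu^{-\gamma}$. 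For $w_c(t)$ itself I would use \eqref{eq:improved:antidiv} with $k=0$, where the prefactor is $\lambda^{-1}\le 1$ and $\|W_\mu^j\|_{L^{\tilde p}}\le \|W_\mu^j\|_{W^{1,\tilde p}}\le M\mu^{-\gamma}$ by \eqref{eq:mikado:est:2}, so
\[
\|w_c(t)\|_{L^{\tilde p}}\le C_{0,\tilde p}\,\eta^{-1}\lambda^{-1}\sum_{j=1}^d \|\nabla f_j(t,\cdot)\|_{C^1}\,\|W_\mu^j\|_{L^{\tilde p}}\le C\bigl(\eta,\|R_0\|_{C^2}\bigr)\mu^{-\gamma}.
\]
Summing the two estimates gives $\|w_c(t)\|_{W^{1,\tilde p}(\T^d)}\le C\bigl(\eta,\|R_0\|_{C^3}\bigr)\mu^{-\gamma}$, as claimed.

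There is no genuine obstacle here: the statement is a direct bookkeeping consequence of Lemma \ref{l:improved:antidiv} together with the Mikado bounds of Proposition \ref{p:mikado}. The only points deserving attention are the exact number of derivatives of $R_0$ entering the constant — one derivative already present in $\nabla f_j$, plus the $C^2$-norm of $\nabla f_j$ demanded by \eqref{eq:improved:antidiv} with $k=1$, which together explain the appearance of $\|R_0\|_{C^3}$ — and the remark that the $\lambda$-prefactors in \eqref{eq:improved:antidiv} are harmless, being $\le 1$ for the relevant values $k=0,1$ (indeed for $k=0$ the prefactor $\lambda^{-1}$ only helps).
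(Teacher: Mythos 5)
Your proof is correct and follows the same route as the paper's: apply the improved antidivergence bound \eqref{eq:improved:antidiv} with $k=1$ (for $\|Dw_c\|_{L^{\tilde p}}$) and $k=0$ (for $\|w_c\|_{L^{\tilde p}}$), and then invoke \eqref{eq:mikado:est:2}. The paper only writes out the $k=1$ case explicitly and declares the $k=0$ case "analogous and even easier," whereas you spell both out and explicitly check that the $\lambda$-prefactors $\lambda^{0}$ and $\lambda^{-1}$ are harmless; this is the same argument, just with the bookkeeping made explicit.
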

\begin{proof}
We estimate only $\|Dw_c(t)\|_{L^{\tilde p}}$, the estimate for $\|w_c(t)\|_{L^{\tilde p}}$ is analogous and even easier. We use once again the bounds provided by Lemma \ref{l:improved:antidiv} with $k=1$:
\begin{equation*}
\begin{split}
\|Dw_c(t)\|_{L^{\tilde p}} 
		& \leq \eta^{-1} C_{1,\tilde p} \sum_{j=1}^d  \Big\| \nabla \big( \psi(t) \chi_j(t) |R_{0,j}(t)|^{1/p} \big) \Big\|_{C^2(\T^d)} \big\|W_\mu^j\big\|_{W^{1,\tilde p}} \\
\text{(by \eqref{eq:mikado:est:2})}
		& \leq C(\eta, \delta, \|R_0\|_{C^3}) \mu^{-\gamma}.
\end{split}
\end{equation*}
\end{proof}

\section{The new defect field}
\label{s:reynolds}

In this section we continue the proof of Proposition \ref{p:main}, defining the new defect field $R_1$ and proving some estimates on it.

\subsection{Definition of the new defect field}

We want to define $R_1$ so that
\begin{equation*}
-\div R_1 = \partial_t \rho_1 + \div(\rho_1 u_1).
\end{equation*}
Let us compute
\begin{equation}
\label{eq:new:rest:computation}
\begin{split}
\partial_t \rho_1 + \div(\rho_1 u_1) 
	& = \div (\vartheta w - R_0) \\
	& \quad + \partial_t (\vartheta + \vartheta_c) + \div( \vartheta u_0 + \rho_0 w ) \\
	& \quad + \div(\rho_0 w_c + \vartheta_c u_0 + \vartheta w_c + \vartheta_c w + \vartheta_c w_c)  \\
	& = \div \Big[ (\vartheta w - R_0) \\
	& \quad \quad \qquad + \big(\nabla \Delta^{-1} \partial_t (\vartheta + \vartheta_c) + \vartheta u_0 + \rho_0 w  \big)  \\
	& \quad \quad \qquad + \big( \rho_0 w_c  + \vartheta_c u_0 + \vartheta w_c + \vartheta_c w + \vartheta_c w_c \big) \Big]  \\
	& = \div \Big[ (\vartheta w - R_0) + R^{\rm linear} + R^{\rm corr} \Big]  \\
\end{split}
\end{equation}
where we put 
\begin{equation}
\label{eq:rlin:rcorr}
\begin{split}
R^{\rm linear} 	& :=  \nabla \Delta^{-1} \partial_t ( \vartheta + \vartheta_c ) + \vartheta u_0 + \rho_0 w  \\
R^{\rm corr} 	& := \rho_0 w_c  + \vartheta_c u_0 + \vartheta w_c + \vartheta_c w + \vartheta_c w_c .
\end{split}
\end{equation}
Note that we can apply the antidivergence operator $\nabla \Delta^{-1}$ to $\partial_t (\vartheta + \vartheta_c)$, since it has zero mean value. Let us now consider the term $\vartheta w - R_0$. Recall from Proposition \ref{p:mikado}, that, for $j \neq k$, $\supp \Theta_\mu^j \cap \supp W_\mu^k = \emptyset$. Coherent with \eqref{eq:periodic}, we use the notation
\begin{equation*}
(\Theta_\mu^j W_\mu^j)_\lambda(x) = \Theta_\mu^j(\lambda x) W_\mu^j(\lambda x).
\end{equation*}
We have
\begin{equation*}
\begin{split}
\vartheta(t) w(t) - R_0(t)
	& = \sum_{j=1}^d \psi^2(t) \chi_j^2(t) R_{0,j}(t) (\Theta_\mu^j W_\mu^j)_\lambda - R_0(t)\\
	& = \sum_{j=1}^d \psi^2(t) \chi_j^2(t) R_{0,j}(t) \big[(\Theta_\mu^j W_\mu^j)_\lambda - e_j \big] \\
	& \qquad + \psi^2(t) \sum_{j=1}^d \big[ \chi_j^2(t) - 1 \big] R_{0,j}(t) e_j \\
	& \qquad + \big[ \psi^2(t) - 1 \big] R_0(t)\\
	& = \sum_{j=1}^d \psi^2(t) \chi_j^2(t) R_{0,j}(t) \big[ (\Theta_\mu^j W_\mu^j)_\lambda - e_j \big] \\
	& \qquad + R^\chi(t) + R^\psi(t),
\end{split}
\end{equation*}
where we put
\begin{equation}
\label{eq:rchi:rpsi}
\begin{split}
R^\chi(t) 	&:= \psi^2(t) \sum_{j=1}^d \big[ \chi_j^2(t) - 1 \big] R_{0,j}(t) e_j,  \\
R^\psi(t) 	&:= \big[ \psi^2(t) - 1 \big] R_0(t).
\end{split}
\end{equation}
Thus, using again Proposition \ref{p:mikado}, and in particular the fact that $\div (\Theta_\mu^j W_\mu^j) = 0$, we get
\begin{equation}
\label{eq:rest:quadr}
\begin{split}
\div (\vartheta(t) w(t) - R_0(t)) 	& = \sum_{j=1}^d \nabla \Big(\psi^2(t) \chi_j^2(t) R_{0,j}(t) \Big) \cdot \Big[ (\Theta_\mu^j W_\mu^j)_\lambda - e_j \Big] \\
									& \qquad + \div (R^\chi + R^\psi).
\end{split}
\end{equation}
Each term in the summation over $j$ has the form $f \cdot g_\lambda$ and it has zero mean value, being a divergence. Moreover, again by Proposition \ref{p:mikado},
\begin{equation*}
\fint_{\T^d} (\Theta_\mu^j W_\mu^j)_\lambda dx = \fint_{\T^d} \Theta_\mu^j W_\mu^j dx = e_j. 
\end{equation*}
Therefore we can apply Lemma \ref{l:improved:antidiv} and define
\begin{equation}
\label{eq:R:quadr}
R^{\rm quadr}(t) := \sum_{j=1}^d \mathcal R \bigg(\nabla  \Big( \psi^2(t) \chi_j^2(t) R_{0,j}(t) \Big) \cdot \Big[ (\Theta_\mu^j W_\mu^j)_\lambda - e_j \Big] \bigg).
\end{equation} 
By Remark \ref{rmk:time:as:param}, $R^{\rm quadr}$ is smooth in $(t,x)$. Summarizing, from \eqref{eq:new:rest:computation} and \eqref{eq:rest:quadr} we get
\begin{equation*}
\partial_t \rho_1 + \div(\rho_1 u_1) = \div \Big[R^{\rm quadr} + R^\chi + R^\psi + R^{\rm linear} + R^{\rm corr} \Big] .  
\end{equation*}
We thus define
\begin{equation}
\label{eq:reynolds}
- R_1 : = R^{\rm quadr}  + R^\chi + R^\psi + R^{\rm linear} + R^{\rm corr}.
\end{equation}

\noindent Aim of the next section will be to get an estimate in $L^1$ for $R_1(t)$, by estimating separately each term in \eqref{eq:reynolds}.

\subsection{Estimates on the defect field}

We now prove some estimates on the different terms which define $R_1$. 
%
%


\begin{lemma}[Estimate on $R^{\rm quadr}$]
\label{l:rquadr}
For every $t \in [0,1]$,
\begin{equation*}
\|R^{\rm quadr}(t)\|_{L^1(\T^d)} \leq \frac{C(\delta, \|R_0\|_{C^2})}{\lambda}.
\end{equation*}
\end{lemma}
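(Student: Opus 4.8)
The plan is to estimate $R^{\rm quadr}(t)$ directly from its definition \eqref{eq:R:quadr} using the improved antidivergence bound \eqref{eq:improved:antidiv} from Lemma \ref{l:improved:antidiv}. Each summand is of the form $\mathcal R(f \cdot g_\lambda)$ with
\begin{equation*}
f = \nabla\bigl(\psi^2(t)\chi_j^2(t) R_{0,j}(t)\bigr), \qquad g = \Theta_\mu^j W_\mu^j - e_j,
\end{equation*}
so that $g_\lambda(x) = (\Theta_\mu^j W_\mu^j)_\lambda(x) - e_j$ (note $e_j$ is constant, hence equals its own rescaling, and $g$ has zero mean by \eqref{eq:mikado:eq}). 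First I would apply \eqref{eq:improved:antidiv} with $k=0$ and $p=1$, which gives
\begin{equation*}
\|R^{\rm quadr}(t)\|_{L^1} \leq \frac{C}{\lambda} \sum_{j=1}^d \bigl\|\nabla\bigl(\psi^2(t)\chi_j^2(t) R_{0,j}(t)\bigr)\bigr\|_{C^1} \bigl\|\Theta_\mu^j W_\mu^j - e_j\bigr\|_{L^1}.
\end{equation*}

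Next I would bound the two factors. For the first factor, the cutoff $\chi_j$ is constructed (see Section \ref{ss:def:pert}) so that $\chi_j$ vanishes where $|R_{0,j}| \leq \delta/(4d)$; hence on the support of $\chi_j$ one has $|R_{0,j}| \geq \delta/(4d)$, and all $C^1$-derivatives of $\chi_j$ are controlled by $C(\delta, \|R_0\|_{C^2})$ (differentiating $\chi_j(t,x) = \bar\chi(R_{0,j}(t,x)/(\delta/(4d)))$-type expressions costs powers of $1/\delta$ and derivatives of $R_0$). Together with $|\psi|\leq 1$ and its derivatives being absolute constants, this yields
\begin{equation*}
\bigl\|\nabla\bigl(\psi^2(t)\chi_j^2(t) R_{0,j}(t)\bigr)\bigr\|_{C^1} \leq C\bigl(\delta, \|R_0\|_{C^2}\bigr).
\end{equation*}
For the second factor, $\|e_j\|_{L^1(\T^d)} = 1$ and by \eqref{eq:mikado:est:1} (with the choice $a=(d-1)/p$, $b=(d-1)/p'$, $a+b=d-1$ from Remark \ref{rmk:choice:ab}) we have $\|\Theta_\mu^j W_\mu^j\|_{L^1} \leq M/2$, so $\|\Theta_\mu^j W_\mu^j - e_j\|_{L^1} \leq C$ uniformly in $\mu$. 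Combining the two bounds and summing the $d$ terms gives exactly
\begin{equation*}
\|R^{\rm quadr}(t)\|_{L^1(\T^d)} \leq \frac{C(\delta, \|R_0\|_{C^2})}{\lambda},
\end{equation*}
as claimed.

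There is no serious obstacle here; the only point requiring a little care is verifying that the hypotheses of Lemma \ref{l:improved:antidiv} genuinely apply — namely that each $f\cdot g_\lambda$ term has zero spatial mean (true because it is a divergence, as noted just before \eqref{eq:R:quadr}, using $\div(\Theta_\mu^j W_\mu^j) = 0$) and that $g = \Theta_\mu^j W_\mu^j - e_j$ has zero mean (which is $\fint \Theta_\mu^j W_\mu^j = e_j$ from \eqref{eq:mikado:eq}). I would also double-check that smoothness in $(t,x)$ of $R^{\rm quadr}$ has already been recorded via Remark \ref{rmk:time:as:param}, so that the $C^1$-norm in space of the coefficient function is finite at each $t$. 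The estimate is then uniform in $t\in[0,1]$ since $\|R_0\|_{C^2}$ denotes the sup over $[0,1]\times\T^d$ of $R_0$ and its derivatives up to order $2$.
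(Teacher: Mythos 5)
Your proof is correct and follows essentially the same route as the paper: apply Lemma \ref{l:improved:antidiv} with $k=0$, $p=1$ to each summand in the definition \eqref{eq:R:quadr}, bound $\|\nabla(\psi^2\chi_j^2 R_{0,j})\|_{C^1}$ by $C(\delta,\|R_0\|_{C^2})$, and bound $\|\Theta_\mu^j W_\mu^j - e_j\|_{L^1}$ uniformly in $\mu$ via \eqref{eq:mikado:est:1}. The extra remarks you include (zero-mean hypotheses, smoothness via Remark \ref{rmk:time:as:param}) are accurate sanity checks but add nothing beyond what the paper does.
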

\begin{proof}
$R^{\rm quadr}$ is defined in \eqref{eq:R:quadr} using Lemma \ref{l:improved:antidiv}. Observe first that
\begin{equation*}
\big\|\nabla  \big( \psi^2(t) \chi_j^2(t) R_{0,j}(t)\big)\big\|_{C^1(\T^d)} \leq C(\delta, \|R_0\|_{C^2} ).
\end{equation*}
Applying the bounds provided by Lemma \ref{l:improved:antidiv}, with $k=0$, and \eqref{eq:mikado:est:1} we get
\begin{equation*}
\begin{split}
\|R^{\rm quadr}& (t)\|_{L^1(\T^d)} \\
									& \leq \sum_{j=1}^d \frac{C_{0,1}}{\lambda} \big\|\nabla  \big( \psi^2(t) \chi_j^2(t) R_{0,j}(t)\big)\big\|_{C^1} \big\|\Theta_\mu^j W_\mu^j - e_j\big\|_{L^1(\T^d)} \\
									& \leq \sum_{j=1}^d \frac{C(\delta, \|R_0\|_{C^2})}{\lambda}. \qedhere
\end{split}
\end{equation*}
\end{proof}

\begin{lemma}[Estimate on $R^{\chi}$]
\label{l:rchi}
For every $t \in [0,1]$
\begin{equation*}
\|R^\chi(t)\|_{L^1(\T^d)} \leq \frac{\delta}{2}
\end{equation*}
\end{lemma}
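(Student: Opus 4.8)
The plan is to estimate $\|R^\chi(t)\|_{L^1(\T^d)}$ directly from its definition in \eqref{eq:rchi:rpsi}, namely
\begin{equation*}
R^\chi(t) = \psi^2(t) \sum_{j=1}^d \big[\chi_j^2(t) - 1\big] R_{0,j}(t) e_j,
\end{equation*}
exploiting the crucial cancellation encoded in the definition of the cutoffs $\chi_j$. The key observation is that $\chi_j^2(t,x) - 1$ vanishes precisely where $\chi_j(t,x) = 1$, i.e. on the set $\{|R_{0,j}(t,x)| \geq \delta/(2d)\}$; hence the product $[\chi_j^2(t,x) - 1] R_{0,j}(t,x)$ is supported in $\{|R_{0,j}(t,x)| < \delta/(2d)\}$, where it is bounded in absolute value by $\delta/(2d)$ (using also $|\chi_j| \leq 1$, so $|\chi_j^2 - 1| \leq 1$).

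First I would take absolute values inside the sum and use $|\psi| \leq 1$ to write, pointwise,
\begin{equation*}
|R^\chi(t,x)| \leq \sum_{j=1}^d \big|\chi_j^2(t,x) - 1\big|\, \big|R_{0,j}(t,x)\big| \leq \sum_{j=1}^d \frac{\delta}{2d} = \frac{\delta}{2},
\end{equation*}
where the middle inequality uses the support property above together with $|\chi_j^2 - 1| \leq 1$. Then, since $\T^d$ has unit measure, integrating this pointwise bound over $\T^d$ immediately yields $\|R^\chi(t)\|_{L^1(\T^d)} \leq \delta/2$, which is exactly the claim.

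There is essentially no obstacle here: the statement is a soft consequence of the construction of the cutoffs $\chi_j$ and the fact that they were designed precisely so that the ``error'' term $R^\chi$ — which measures the mismatch between $R_0$ and the part of $R_0$ actually cancelled by the Mikado perturbation — is pointwise small, of size comparable to $\delta$. The only mild care needed is the bookkeeping of the factor $d$: there are $d$ components, each contributing at most $\delta/(2d)$ on the relevant set, so the sum is controlled by $\delta/2$. I would present this in two or three lines.
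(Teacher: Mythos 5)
Your argument is correct and is essentially identical to the paper's own proof: both use the support property of the cutoff $\chi_j$ to obtain the pointwise bound $|R^\chi(t,x)| \leq \sum_j |\chi_j^2-1|\,|R_{0,j}| \leq \delta/2$ and then integrate over the unit-measure torus. The only difference is that you spell out the roles of $|\psi|\leq 1$ and $|\chi_j|\leq 1$ slightly more explicitly.
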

\begin{proof}
Notice that $\chi_j(t,x) = 1$ if $|R_{0,j}(t,x)| \geq \delta/(2d)$. Therefore $R^\chi(t,x) \neq 0$ only when $|R_{0,j}(t,x)| \leq \delta/(2d)$. We thus have the pointwise estimate
\begin{equation*}
|R^\chi(t,x)| \leq \sum_{j=1}^d |\chi_j(t,x)^2 - 1| |R_{0,j}(t,x)|
\leq \frac{\delta}{2}.
\end{equation*}
from which the conclusion easily follows. 
\end{proof}

\begin{lemma}[Estimate on $R^{\psi}$]
\label{l:rpsi}
It holds
\begin{equation*}
\|R^\psi(t)\|_{L^1(\T^d)} 
\leq 
\begin{cases}
0,														& t \in I_\sigma, \\
\|R_0(t)\|_{L^1(\T^d)},						 	& t \in [0,1] \setminus I_\sigma.
\end{cases}
\end{equation*}
\end{lemma}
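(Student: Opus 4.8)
The statement to prove is Lemma~\ref{l:rpsi}, the estimate on $R^\psi(t) = [\psi^2(t)-1]R_0(t)$. This is the most elementary of the estimates on the pieces of the new defect field, so the plan is short and direct.

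\textbf{Approach.} The key observation is simply that $\psi$ was chosen (in Section~\ref{ss:def:pert}) so that $\psi \equiv 1$ on $[\sigma, 1-\sigma] = I_\sigma$ and $|\psi| \le 1$ everywhere. First I would split into the two cases appearing in the statement. If $t \in I_\sigma$, then $\psi(t) = 1$, hence $\psi^2(t) - 1 = 0$, so $R^\psi(t) = 0$ identically as a function of $x$, giving $\|R^\psi(t)\|_{L^1(\T^d)} = 0$. If instead $t \in [0,1] \setminus I_\sigma$, then since $|\psi(t)| \le 1$ we have $|\psi^2(t) - 1| \le 1$, and therefore the pointwise bound $|R^\psi(t,x)| = |\psi^2(t)-1|\,|R_0(t,x)| \le |R_0(t,x)|$ holds for every $x \in \T^d$; integrating in $x$ over $\T^d$ yields $\|R^\psi(t)\|_{L^1(\T^d)} \le \|R_0(t)\|_{L^1(\T^d)}$. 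Combining the two cases gives exactly the claimed inequality.

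\textbf{Main obstacle.} There is essentially no obstacle here: the lemma is a two-line consequence of the defining properties of the cutoff $\psi$ and the triangle inequality $|\psi^2(t)-1| \le 1 + |\psi(t)|^2 \le 2$, sharpened to $\le 1$ using $0 \le \psi^2(t) \le 1$. The only thing to be slightly careful about is that $\psi$ is a function of $t$ alone (not of $x$), so that the factor $|\psi^2(t)-1|$ pulls out of the spatial $L^1$ norm cleanly. I would present the proof as the short pointwise estimate followed by integration, matching the style of the proof of Lemma~\ref{l:rchi} immediately preceding it.
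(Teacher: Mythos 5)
Your proof is correct and matches the paper's (the paper simply states the lemma "follows immediately from the definition of $R^\psi$ and the cutoff $\psi$"); you have filled in exactly the intended two-line argument. The remark about the triangle inequality giving $\le 2$ is unnecessary, since $0\le\psi^2\le1$ already gives $|\psi^2-1|\le1$ directly, but this does not affect correctness.
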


\begin{proof}
The proof follows immediately from the definition of $R^\psi$ in \eqref{eq:rchi:rpsi} and the definition of the cutoff $\psi$. 
\end{proof}

\begin{lemma}[Estimate on $R^{\rm linear}$]
\label{l:rlinear}
For every $t \in [0,1]$
\begin{equation*}
\|R^{\rm linear}(t)\|_{L^1(\T^d)} \leq  C\big(\eta, \delta, \sigma, \|\rho_0\|_{C^0}, \|u_0\|_{C^0}, \|R_0\|_{C^0}\big) \mu^{-\gamma}.
\end{equation*}
\end{lemma}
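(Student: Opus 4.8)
## Plan for the proof of Lemma \ref{l:rlinear}

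The plan is to estimate each of the three terms in $R^{\rm linear} := \nabla \Delta^{-1} \partial_t(\vartheta + \vartheta_c) + \vartheta u_0 + \rho_0 w$ separately in $L^1(\T^d)$, and to show that each carries a factor $\mu^{-\gamma}$. The two terms $\vartheta u_0$ and $\rho_0 w$ are the easy ones: by H\"older's inequality (or simply by $L^1 \le L^p$ times $L^\infty$) one bounds $\|\vartheta u_0(t)\|_{L^1} \le \|\vartheta(t)\|_{L^1}\|u_0(t)\|_{C^0}$ and similarly $\|\rho_0 w(t)\|_{L^1}\le\|\rho_0(t)\|_{C^0}\|w(t)\|_{L^1}$. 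Now $\vartheta(t)$ is a sum of terms $f(\Theta_\mu^j)_\lambda$ with $\|f\|_{C^1}\le C(\eta,\delta,\|R_0\|_{C^1})$ (recall the cutoff $\chi_j$ forces $|R_{0,j}|\ge\delta/(4d)$ on its support), so by the improved H\"older inequality \eqref{eq:improved:holder:2} and the bound $\|\Theta_\mu^j\|_{L^1}\le M\mu^{-\gamma}$ from \eqref{eq:mikado:est:2}, we get $\|\vartheta(t)\|_{L^1}\le C(\eta,\delta,\|R_0\|_{C^1})\mu^{-\gamma}$ (up to a $\lambda^{-1}$ error which is harmless, being $\le 1$; more simply one may just use crude H\"older to avoid introducing $\lambda$). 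The same estimate applies to $\|w(t)\|_{L^1}$ using $\|W_\mu^j\|_{L^1}\le M\mu^{-\gamma}$.

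The remaining term $\nabla\Delta^{-1}\partial_t(\vartheta+\vartheta_c)$ requires a little more care because of the time derivative. First note $\partial_t\vartheta_c(t)$ is a scalar function of $t$ alone, hence has zero spatial mean, and $|\partial_t\vartheta_c(t)|\le C(\eta,\|R_0\|_{C^1})\lambda^{-1}$ by differentiating Lemma \ref{l:vartheta:c} in time (which costs $\|R_0\|_{C^1}$ replaced by $\|R_0\|_{C^2}$ — but in any case it is $O(\lambda^{-1})$, hence negligible). For the main part, $\partial_t\vartheta(t)$ is again a sum of terms of the form $g(t,x)(\Theta_\mu^j)_\lambda(x)$, where $g = \eta\,\partial_t\big(\psi\chi_j\,\mathrm{sign}(R_{0,j})|R_{0,j}|^{1/p}\big)$ satisfies $\|g(t)\|_{C^0}\le C(\eta,\delta,\|R_0\|_{C^1})$ (the time derivative of $\psi$ introduces the $\sigma$-dependence through $\|\psi'\|_{C^0}\le C/\sigma$). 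Since $\Theta_\mu^j$ has zero mean, we may write $(\Theta_\mu^j)_\lambda = \div\big((\nabla\Delta^{-1}\Theta_\mu^j)_\lambda \cdot (\text{something})\big)$... more precisely, apply the standard antidivergence estimate \eqref{eq:stand:antidiv:per} with $k=0$: $\|\nabla\Delta^{-1}(\Theta_\mu^j)_\lambda\|_{L^1}\le C\lambda^{-1}\|\Theta_\mu^j\|_{L^1}\le C\lambda^{-1}\mu^{-\gamma}$. Then $\nabla\Delta^{-1}\partial_t\vartheta$ is handled by the improved antidivergence operator $\mathcal R$ of Lemma \ref{l:improved:antidiv}: writing each summand $g(t,x)(\Theta_\mu^j)_\lambda$ (which has zero spatial mean since it equals $\partial_t$ of something with constant spatial mean... actually one should subtract its mean and absorb it, just as $\vartheta_c$ was introduced), Lemma \ref{l:improved:antidiv} with $k=0$ gives $\|\mathcal R(g(\Theta_\mu^j)_\lambda)(t)\|_{L^1}\le C\lambda^{-1}\|g(t)\|_{C^1}\|\Theta_\mu^j\|_{L^1}$. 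Using \eqref{eq:mikado:est:2} this is $\le C(\eta,\delta,\sigma,\|R_0\|_{C^2})\lambda^{-1}\mu^{-\gamma}$, so in particular $\le C\mu^{-\gamma}$ since $\lambda\ge 1$. (If one prefers to avoid the subtraction of means, one can instead bound $\|\nabla\Delta^{-1}\partial_t\vartheta\|_{L^1}$ by first using $\nabla\Delta^{-1}\partial_t = \partial_t\nabla\Delta^{-1}$, estimating $\nabla\Delta^{-1}\vartheta$ via $\mathcal R$ and then differentiating in time, since time enters only as a parameter by Remark \ref{rmk:time:as:param}.)

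Combining the three bounds and using $\lambda^{-1}\le 1$ everywhere gives
\[
\|R^{\rm linear}(t)\|_{L^1(\T^d)} \le C\big(\eta,\delta,\sigma,\|\rho_0\|_{C^0},\|u_0\|_{C^0},\|R_0\|_{C^2}\big)\,\mu^{-\gamma},
\]
which is the claimed estimate (the statement writes $\|R_0\|_{C^0}$, but of course the higher-norm dependence is absorbed in the constant — or one simply tracks that only $C^0$ of $R_0$ multiplies the leading $\mu^{-\gamma}$ factor while the $C^1,C^2$ norms enter the coefficients of lower-order-in-$\lambda$ terms). The main point — and the only mild subtlety — is the treatment of $\nabla\Delta^{-1}\partial_t(\vartheta+\vartheta_c)$: one must recognize that although a time derivative of a highly concentrated, highly oscillatory function is involved, the antidivergence $\nabla\Delta^{-1}$ (or better, $\mathcal R$) recovers the gain $\lambda^{-1}$ while the concentration gain $\mu^{-\gamma}$ is preserved because it sits in the $L^1$ norm of $\Theta_\mu^j$ itself, which $\mathcal R$ does not spoil. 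Everything else is a routine application of the product estimates and the Mikado bounds \eqref{eq:mikado:est:1}–\eqref{eq:mikado:est:2}.
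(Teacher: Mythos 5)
Your decomposition of $R^{\rm linear}$ into its three summands and your treatment of $\vartheta u_0$ and $\rho_0 w$ match the paper: classical H\"older plus the $L^1$ Mikado bounds $\|\Theta_\mu^j\|_{L^1},\|W_\mu^j\|_{L^1}\leq M\mu^{-\gamma}$ from \eqref{eq:mikado:est:2} are all that is used, and you correctly identify that the entire gain sits in those $L^1$ norms. You also correctly flag that the stated dependence on $\|R_0\|_{C^0}$ should really be $\|R_0\|_{C^1}$ (the time derivative of $|R_{0,j}|^{1/p}$ needs $C^1$), and that $\sigma$ enters via $\psi'$.

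For the term $\nabla\Delta^{-1}\partial_t(\vartheta+\vartheta_c)$, however, you overcomplicate and introduce a small but genuine conceptual confusion. You invoke the improved antidivergence $\mathcal R$ of Lemma \ref{l:improved:antidiv} to extract a $\lambda^{-1}$ factor, but $\mathcal R(f g_\lambda)$ and $\nabla\Delta^{-1}(f g_\lambda)$ are \emph{different} antidivergences of the same function; a bound on $\|\mathcal R(\cdot)\|_{L^1}$ does not bound $\|\nabla\Delta^{-1}(\cdot)\|_{L^1}$, and $R^{\rm linear}$ is defined with the latter. (Your fallback, ``estimate $\nabla\Delta^{-1}\vartheta$ via $\mathcal R$,'' has the same issue.) The paper's route is shorter and avoids this entirely: it applies Lemma \ref{l:antidiv} with $k=0$, $p=1$ --- $\nabla\Delta^{-1}$ is bounded on mean-zero $L^1$ --- giving $\|\nabla\Delta^{-1}\partial_t(\vartheta+\vartheta_c)\|_{L^1}\leq C(\|\partial_t\vartheta\|_{L^1}+|\vartheta_c'|)$, and then bounds $\partial_t\vartheta$ pointwise by $C(\eta,\delta,\sigma,\|R_0\|_{C^1})\sum_j|\Theta_\mu^j(\lambda x)|$ to conclude $\|\partial_t\vartheta\|_{L^1}\leq C\mu^{-\gamma}$ (and $|\vartheta_c'|\leq\|\partial_t\vartheta\|_{L^1}$). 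No $\lambda^{-1}$ gain is needed or used for this term, and no zero-mean bookkeeping beyond noting that $\vartheta+\vartheta_c$ already has zero spatial mean by construction. If you want to keep your $\mathcal R$-based route you would have to redefine $R^{\rm linear}$ with $\mathcal R$ in place of $\nabla\Delta^{-1}$ --- an equally valid but different choice --- rather than claim to estimate the paper's $R^{\rm linear}$.
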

\begin{proof}
At each time $t \in [0,1]$, 
\begin{equation*}
\begin{split}
\|R^{\rm linear}&(t)\|_{L^1(\T^d)} \\
	& \leq \| \nabla \Delta^{-1} \partial_t ( \vartheta(t) + \vartheta_c(t) )\|_{L^1(\T^d)} + \|\vartheta(t) u_0(t)\|_{L^1(\T^d)}  + \|\rho_0(t) w(t)\|_{L^1(\T^d)}    \\
	& \leq \|\partial_t \vartheta(t)\|_{L^1(\T^d)} + |\vartheta_c'(t)| + \|\vartheta(t) u_0(t)\|_{L^1(\T^d)}  + \|\rho_0(t) w(t)\|_{L^1(\T^d)},
\end{split}
\end{equation*}
where the first term was estimating using Lemma \ref{l:antidiv}. We now separately estimate each term in the last sum.

\bigskip
\noindent \textit{1. Estimate on $\|\partial_t \vartheta(t)\|_{L^1}$}. We have
\begin{equation*}
\partial_t \vartheta(t) = \eta \sum_{j=1}^d \partial_t \Big( \psi(t) \chi_j(t,x) \sign(R_{0,j}(t,x)) |R_{0,j}(t,x)|^{1/p} \Big) \Theta_\mu^j(\lambda x)
\end{equation*}
from which we get the pointwise estimate
\begin{equation*}
|\partial_t \vartheta(t)| \leq C(\eta, \delta, \sigma, \|R_0\|_{C^1}) \sum_{j=1}^d |\Theta_\mu^j(\lambda x)|.
\end{equation*}
Using \eqref{eq:mikado:est:1}, we deduce
\begin{equation*}
\|\partial_t \vartheta(t)\|_{L^1} \leq C(\eta, \delta, \sigma, \|R_0\|_{C^1}) \mu^{-\gamma}. 
\end{equation*}

\bigskip
\noindent \textit{2. Estimate on $|\vartheta_c'(t)|$}. We have
\begin{equation*}
|\vartheta_c'(t)| \leq \|\partial_t \vartheta(t)\|_{L^1}  \leq C(\eta, \delta, \sigma, \|R_0\|_{C^1}) \mu^{-\gamma}.
\end{equation*}

\bigskip
\noindent \textit{3. Estimate on $\|\vartheta(t) u_0(t)\|_{L^1}$}. We now use the classical H\"older inequality to estimate
\begin{equation*}
\begin{split}
\|\vartheta(t) u_0(t)\|_{L^1} 
		& \leq \|u_0\|_{C^0} \|\vartheta(t)\|_{L^1} \\
		& \leq \eta \|u_0\|_{C^0} \sum_{j=1}^d \big\| |R_{0,j}|^{1/p} \big\|_{C^0} \|\Theta_\mu^j\|_{L^1} \\
\text{(by \eqref{eq:mikado:est:2})}
		& \leq C\big(\eta, \|u_0\|_{C^0}, \|R_0\|_{C^0}\big) \mu^{-\gamma}.  
\end{split}
\end{equation*}

\bigskip
\noindent \textit{4. Estimate on $\|\rho_0(t) w(t)\|_{L^1}$}. Similarly, again using the classical H\"older inequality,
\begin{equation*}
\begin{split}
\|\rho_0(t) w(t)\|_{L^1} 
		& \leq \|\rho_0\|_{C^0} \|w(t)\|_{L^1} \\
		& \leq \eta^{-1} \|\rho_0\|_{C^0} \sum_{j=1}^d \big\| |R_{0,j}|^{1/p'} \big\|_{C^0} \|W_\mu^j\|_{L^1} \\
\text{(by \eqref{eq:mikado:est:2})}
		& \leq C\big(\eta, \|\rho_0\|_{C^0}, \|R_0\|_{C^0}\big) \mu^{-\gamma}.  \qedhere
\end{split}
\end{equation*}
\end{proof}

\begin{lemma}[Estimate on $R^{\rm corr}$]
\label{l:rcorr}
For every $t \in [0,1]$,
\begin{equation*}
\|R^{\rm corr}(t)\|_{L^1(\T^d)} \leq \frac{C(\eta, \delta, \|\rho_0\|_{C^0}, \|u_0\|_{C^0}, \|R_0\|_{C^2})}{\lambda}.
\end{equation*}
\end{lemma}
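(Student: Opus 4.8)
The plan is to estimate each of the five summands in the definition \eqref{eq:rlin:rcorr} of $R^{\rm corr}$ separately in $L^1(\T^d)$ via H\"older's inequality, exploiting the fact that each summand contains at least one \emph{corrector} factor, namely $\vartheta_c$ or $w_c$, and that every corrector factor carries a gain of $\lambda^{-1}$, while all the remaining factors are bounded uniformly in $\lambda$.

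First I would collect the bounds proved in Section \ref{s:perturbations}. Since $\lambda\in\N$ (so $\lambda^{-1/p}\le 1$ and $\lambda^{-1/p'}\le1$), Lemma \ref{l:lp:vartheta} gives $\|\vartheta(t)\|_{L^p}\le C(\eta,\delta,\|R_0\|_{C^1})$ and Lemma \ref{l:lp:w} gives $\|w(t)\|_{L^{p'}}\le C(\eta,\delta,\|R_0\|_{C^1})$, using $\|R_0(t)\|_{L^1}^{1/p},\,\|R_0(t)\|_{L^1}^{1/p'}\le C(\|R_0\|_{C^0})$. From Lemma \ref{l:vartheta:c} we have $|\vartheta_c(t)|\le C(\eta,\|R_0\|_{C^1})/\lambda$, and from Lemma \ref{l:lp:wc} we have $\|w_c(t)\|_{L^{p'}}\le C(\eta,\delta,\|R_0\|_{C^2})/\lambda$; since $|\T^d|=1$ the latter also yields $\|w_c(t)\|_{L^1}\le\|w_c(t)\|_{L^{p'}}$. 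Finally $\|u_0(t)\|_{L^1}\le\|u_0\|_{C^0}$ and $\|\rho_0(t)\|_{L^1}\le\|\rho_0\|_{C^0}$.

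Then I would run through the five terms, splitting the $L^1$ norm by H\"older either as $L^\infty\cdot L^1$ or as $L^p\cdot L^{p'}$: $\|\rho_0 w_c\|_{L^1}\le\|\rho_0\|_{C^0}\|w_c\|_{L^1}$, $\ \|\vartheta_c u_0\|_{L^1}\le|\vartheta_c|\,\|u_0\|_{C^0}$, $\ \|\vartheta w_c\|_{L^1}\le\|\vartheta\|_{L^p}\|w_c\|_{L^{p'}}$, $\ \|\vartheta_c w\|_{L^1}\le|\vartheta_c|\,\|w\|_{L^{p'}}$, and $\ \|\vartheta_c w_c\|_{L^1}\le|\vartheta_c|\,\|w_c\|_{L^1}$. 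Inserting the bounds above, each of the five right-hand sides is at most $C(\eta,\delta,\|\rho_0\|_{C^0},\|u_0\|_{C^0},\|R_0\|_{C^2})/\lambda$ — for the last term using $\lambda^{-2}\le\lambda^{-1}$ — and summing the five contributions gives the asserted estimate.

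I do not expect a genuine obstacle: the lemma is essentially a bookkeeping consequence of the estimates already established. The only mild point of care is that the gain must come from the oscillation parameter $\lambda$ alone — the bound on $w_c$ used here (Lemma \ref{l:lp:wc}) is stated in terms of $\lambda^{-1}$ and does not, and need not, see the concentration parameter $\mu$ — so that $R^{\rm corr}$ is controlled simply by taking $\lambda$ large, consistently with how the other pieces of $R_1$ will be handled.
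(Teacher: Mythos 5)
Your proposal is correct and follows essentially the same route as the paper: the paper likewise splits $R^{\rm corr}$ into its five summands, applies H\"older as $L^\infty\cdot L^1$ or $L^p\cdot L^{p'}$, and then feeds in Lemmas~\ref{l:lp:vartheta}, \ref{l:vartheta:c}, \ref{l:lp:w}, \ref{l:lp:wc} to extract a $\lambda^{-1}$ (or $\lambda^{-2}$ in the last term) from whichever corrector factor is present. The only cosmetic difference is that you record $\lambda^{-2}\le\lambda^{-1}$ for $\vartheta_c w_c$ while the paper leaves that term with the sharper $\lambda^{-2}$; the argument is otherwise the same.
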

\begin{proof}
We estimate separately each term in the definition \eqref{eq:rlin:rcorr} of $R^{\rm corr}$.

\bigskip
\noindent \textit{1. Estimate on $\rho_0 w_c$.} By the classical H\"older inequality, 
\begin{equation*}
\begin{split}
\|\rho_0(t) w_c(t)\|_{L^1} 	& \leq \|\rho_0\|_{C^0} \|w_c(t)\|_{L^1} \\
								& \leq \|\rho_0\|_{C^0} \|w_c(t)\|_{L^{p'}} \\
\text{(by Lemma \ref{l:lp:wc})}
								& \leq \frac{C(\eta, \delta, \|\rho_0\|_{C^0}, \|R_0\|_{C^2}) }{\lambda}. 
\end{split}
\end{equation*}

\bigskip
\noindent \textit{2. Estimate on $\vartheta_c u_0$.} We use Lemma \ref{l:vartheta:c}:
\begin{equation*}
\begin{split}
\|\vartheta_c(t) u_0(t)\|_{L^1} 	\leq |\vartheta_c(t)| \|u_0\|_{C^0}  \leq \frac{C(\eta, \|u_0\|_{C^0}, \|R_0\|_{C^2})}{\lambda}.
\end{split}
\end{equation*}

\bigskip
\noindent \textit{3. Estimate on $\vartheta w_c$.} We use Lemma \ref{l:lp:vartheta} and Lemma \ref{l:lp:wc}:
\begin{equation*}
\begin{split}
\|\vartheta(t) w_c(t)\|_{L^1}  \leq \|\vartheta(t)\|_{L^p} \|w_c(t)\|_{L^{p'}}  \leq  \frac{C(\eta, \delta, \|R_0\|_{C^2})}{\lambda}.
\end{split}
\end{equation*}

\bigskip
\noindent \textit{4. Estimate on $\vartheta_c w$.} We use Lemma \ref{l:vartheta:c} and Lemma \ref{l:lp:w}:
\begin{equation*}
\begin{split}
\|\vartheta_c(t) w(t)\|_{L^1}
& \leq |\vartheta_c(t)| \|w(t)\|_{L^1}  \\
& \leq |\vartheta_c(t)| \|w(t)\|_{L^{p'}}  \\
& \leq \frac{C(\eta, \delta, \|R_0\|_{C^2})}{\lambda}.
\end{split}
\end{equation*}

\bigskip
\noindent \textit{5. Estimate on $\vartheta_c w_c$.} We use Lemma \ref{l:vartheta:c} and Lemma \ref{l:lp:wc}:
\begin{equation*}
\begin{split}
\|\vartheta_c(t) w_c(t)\|_{L^1}
& \leq |\vartheta_c(t)| \|w_c(t)\|_{L^1} \\
& \leq |\vartheta_c(t)| \|w_c(t)\|_{L^{p'}} \\
& \leq \frac{C(\eta, \delta, \|R_0\|_{C^2})}{\lambda^2}.  \qedhere
\end{split}
\end{equation*}
\end{proof}

\begin{remark}
In estimating $R^{\rm corr}$ the only term where we really need the fast oscillation $\lambda$ is the estimate on $\vartheta w_c$. All the other terms could be alternatively estimated using the concentration parameter $\mu$, since, by \eqref{eq:mikado:est:2}, $|\vartheta_c(t)|, \|w_c(t)\|_{L^1}, \|w(t)\|_{L^1} \leq \text{const.} \ \mu^{-\gamma}$. In this way we would obtained the less refined estimate
\begin{equation*}
\|R^{\rm corr}(t)\|_{L^1} \leq \frac{C}{\lambda} + \frac{C}{\mu^\gamma},
\end{equation*}
which is however enough to prove Proposition \ref{p:main}.  
\end{remark}

\section{Proof of Proposition \ref{p:main}}
\label{s:proof:prop}

In this section we conclude the proof of Proposition \ref{p:main}, proving estimates \eqref{eq:dist:rho:stat}-\eqref{eq:reyn:stat}. We will choose $$\mu = \lambda^c$$ for a suitable $c>1$ and $\lambda$ sufficiently large. 

\bigskip
\noindent \textit{1. Estimate \eqref{eq:dist:rho:stat}.} We have
\begin{equation*}
\begin{split}
\|\rho_1(t) - \rho_0(t)\|_{L^p}
& \leq \|\vartheta_0(t)\|_{L^p} + |\vartheta_c(t)| \\
\text{(Lemmas \ref{l:lp:vartheta} and \ref{l:vartheta:c})} 
& \leq \frac{M}{2} \eta \|R_0(t)\|^{1/p}_{L^1} +  \frac{C(\eta, \delta, \|R_0(t)\|_{C^1})}{\lambda^{1/p}} \\
& \qquad + \frac{C(\eta, \|R_0(t)\|_{C^1})}{\lambda} \\
& \leq M \eta \|R_0(t)\|^{1/p}_{L^1},
\end{split}
\end{equation*}
if the constant $\lambda$ is chosen large enough. Notice also that, if $t \in [0,1] \setminus I_{\sigma/2}$, then $\vartheta(t) \equiv 0$ and $\vartheta_c(t) = 0$, thanks to the cutoff $\psi$ in \eqref{eq:perturbation}. Therefore \eqref{eq:dist:rho:stat} is proven. 

\bigskip
\noindent \textit{2. Estimate \eqref{eq:dist:u:1:stat}.} The estimate uses Lemmas \ref{l:lp:w} and \ref{l:lp:wc} and it is completely similar to what we just did for \eqref{eq:dist:rho:stat}.

\bigskip
\noindent \textit{4. Estimate \eqref{eq:dist:u:2:stat}.}  By Lemma \ref{l:w1p:w}, 
\begin{equation*}
\|w(t)\|_{W^{1, \tilde p}} \leq C \Big(\eta, \|R_0\|_{C^1} \Big) \lambda \mu^{-\gamma} \leq \delta,
\end{equation*}
if $\mu$ is chosen of the form $\mu = \lambda^c$ with $c > 1/\gamma$ and $\lambda$ is chosen large enough. 

\bigskip
\noindent \textit{4. Estimate \eqref{eq:reyn:stat}.} 
Recall the definition of $R_1$ in \eqref{eq:reynolds}. 
Using Lemmas \ref{l:rquadr},  \ref{l:rchi}, \ref{l:rpsi}, \ref{l:rlinear}, \ref{l:rcorr}, for $t \in I_\sigma$, we get
\begin{equation*}
\begin{split}
\|R_1(t)\|_{L^1} 	& \leq \frac{\delta}{2} + C(\eta, \delta, \sigma, \|\rho_0\|_{C^0}, \|u_0\|_{C^0}, \|R_0\|_{C^2}) \bigg( \frac{1}{\lambda} + \frac{1}{\mu^\gamma} \bigg)  \\
					& \leq \frac{\delta}{2} + C(\eta, \delta, \sigma, \|\rho_0\|_{C^0}, \|u_0\|_{C^0}, \|R_0\|_{C^2}) \bigg( \frac{1}{\lambda} + \frac{1}{\lambda^{c \gamma}} \bigg)  \\
					& \leq \delta
\end{split}
\end{equation*}
provided $\lambda$ is chosen large enough. Similarly, for $t \in I_{\sigma/2}  \setminus I_\sigma$, we have
\begin{equation*}
\begin{split}
\|& R_1(t)\|_{L^1} \\
& \leq \|R_0(t)\|_{L^1} + \frac{\delta}{2} + C(\eta, \delta, \sigma, \|\rho_0\|_{C^0}, \|u_0\|_{C^0}, \|R_0\|_{C^2}) \bigg( \frac{1}{\lambda} + \frac{1}{\mu^{\gamma}} \bigg)  \\
& \leq \|R_0(t)\|_{L^1} + \frac{\delta}{2} + C(\eta, \delta, \sigma, \|\rho_0\|_{C^0}, \|u_0\|_{C^0}, \|R_0\|_{C^2}) \bigg( \frac{1}{\lambda} + \frac{1}{\lambda^{c \gamma}} \bigg)  \\
& \leq \|R_0(t)\|_{L^1} + \delta
\end{split}
\end{equation*}
if $\lambda$ is chosen large enough. Finally, for $t \in [0,1] \setminus I_{\sigma/2}$, the cutoff function $\psi(t) \equiv 0$, and thus $\vartheta(t) = \vartheta_c(t) = w(t) = w_c(t) = 0$. Therefore $R_1(t) = R_0(t)$.

\section{Sketch of the proofs of Theorems \ref{thm:strong}, \ref{thm:diffusion}, \ref{thm:diffusion:higher}}
\label{s:sketch}

Theorems \ref{thm:strong}, \ref{thm:diffusion}, \ref{thm:diffusion:higher} can be proven in a very similar way to Theorem \ref{thm:main} and thus we present just a sketch of their proofs. 

The proof of Theorem \ref{thm:main} follows from Proposition \ref{p:main}: exactly in the same way, for each one of Theorems \ref{thm:strong}, \ref{thm:diffusion}, \ref{thm:diffusion:higher} there is a corresponding \emph{main} proposition, from which the proof the theorem follows.

\subsection{Sketch of the proof of Theorem \ref{thm:diffusion}}

The proof of Theorem \ref{thm:diffusion} follows from the next proposition, exactly in the same way as Theorem \ref{thm:main} follows from Proposition \ref{p:main}. Let us consider the equation 
\begin{equation}\label{eq:cont:diffu}
\left\{
\begin{split}
\partial_t \rho + \div(\rho u) 	-\Delta\rho& = - \div R, \\
\div u 								& = 0.
\end{split}
\right.	
\end{equation}
\begin{proposition}
\label{p:main:diffusion}
Proposition \ref{p:main} holds with \eqref{eq:cont:diffu} instead of \eqref{eq:cont:reyn}. 
\end{proposition}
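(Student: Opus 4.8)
The plan is to reuse the proof of Proposition~\ref{p:main} essentially word for word: I would define the perturbed density and velocity by the same formulas, $\rho_1 := \rho_0 + \vartheta + \vartheta_c$ and $u_1 := u_0 + w + w_c$ with $\vartheta,\vartheta_c,w,w_c$ as in Section~\ref{s:perturbations} and the same constant $M$ from \eqref{eq:M}, and modify only the definition of the new defect field so as to absorb the extra diffusion term. Since $(\rho_0,u_0,R_0)$ now solves \eqref{eq:cont:diffu} rather than \eqref{eq:cont:reyn}, carrying out the computation \eqref{eq:new:rest:computation} and using that $\Delta\vartheta_c = 0$ (recall $\vartheta_c$ depends only on $t$) yields
\[
\partial_t\rho_1 + \div(\rho_1 u_1) - \Delta\rho_1 = \div\bigl[R^{\rm quadr} + R^\chi + R^\psi + R^{\rm linear} + R^{\rm corr}\bigr] - \Delta\vartheta,
\]
with $R^{\rm quadr},R^\chi,R^\psi,R^{\rm linear},R^{\rm corr}$ literally as in Section~\ref{s:reynolds}. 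Writing $\Delta\vartheta = \div(\nabla\vartheta)$, I would introduce $R^{\rm diff} := -\nabla\vartheta$ and redefine
\[
-R_1 := R^{\rm quadr} + R^\chi + R^\psi + R^{\rm linear} + R^{\rm corr} + R^{\rm diff},
\]
so that $(\rho_1,u_1,R_1)$ solves \eqref{eq:cont:diffu}.

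Next, all the estimates of Sections~\ref{s:perturbations} and \ref{s:reynolds} on $\vartheta,\vartheta_c,w,w_c$ and on $R^{\rm quadr},R^\chi,R^\psi,R^{\rm linear},R^{\rm corr}$ remain valid verbatim, so the only new point is an $L^1$ bound on $R^{\rm diff}(t) = -\nabla\vartheta(t)$. Differentiating \eqref{eq:perturbation}, the dominant term carries a factor $\lambda(\nabla\Theta_\mu^j)_\lambda$ and the other terms only differentiate the cutoffs (which are bounded with bounded derivatives); hence $|\nabla\vartheta(t,x)| \leq C(\eta,\delta,\|R_0\|_{C^1})\sum_j\bigl(|\Theta_\mu^j(\lambda x)| + \lambda|\nabla\Theta_\mu^j(\lambda x)|\bigr)$, and \eqref{eq:mikado:est} with $r=1$ (using $a = (d-1)/p$ from Remark~\ref{rmk:choice:ab}) gives
\[
\|R^{\rm diff}(t)\|_{L^1(\T^d)} \leq C(\eta,\delta,\|R_0\|_{C^1})\,\lambda\,\mu^{\,1 - \gamma_1}, \qquad \gamma_1 = (d-1)\Bigl(1-\tfrac1p\Bigr) = \tfrac{d-1}{p'}\,.
\]

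The crucial observation — and the place where the extra hypothesis enters — is that the second condition $p' < d-1$ in \eqref{eq:p:ineq:diffu} is precisely $\gamma_1 > 1$, so $\mu^{\,1-\gamma_1} = \mu^{-(\gamma_1-1)}$ with $\gamma_1-1>0$. Thus $R^{\rm diff}$ obeys a bound of exactly the same shape $C\lambda\mu^{-\gamma}$ as $\|w(t)\|_{W^{1,\tilde p}}$ in Lemma~\ref{l:w1p:w}, once one replaces $\gamma$ by $\min\{\gamma,\gamma_1-1\}>0$ throughout; and since the cutoff $\psi$ forces $\vartheta(t)\equiv 0$, hence $R^{\rm diff}(t)=0$, for $t\notin I_{\sigma/2}$, this new term enters \eqref{eq:reyn:stat} in exactly the same lower-order, $\delta$-absorbed way as the terms already handled. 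With this in hand, the argument of Section~\ref{s:proof:prop} — choosing $\mu=\lambda^c$ with $c>1/\gamma$ and then $\lambda$ large — goes through unchanged and delivers \eqref{eq:dist:rho:stat}–\eqref{eq:reyn:stat}; the iteration deriving Theorem~\ref{thm:diffusion} from Proposition~\ref{p:main:diffusion} is then verbatim that of Theorem~\ref{thm:main}, starting this time from $R_0 := -\nabla\Delta^{-1}\bigl(\partial_t\rho_0 + \div(\rho_0 u_0) - \Delta\rho_0\bigr)$. I expect the estimate on $R^{\rm diff}$ to be the only genuine obstacle: it is exactly here that the concentration built into the Mikado densities must beat the diffusion, which forces the restriction $p'<d-1$; everything else is a cosmetic adaptation of the transport case.
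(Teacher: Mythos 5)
Your proposal is correct and takes essentially the same route as the paper: both run the identical perturbative construction, identify $-\nabla\vartheta$ as the single new term in the defect field coming from $-\Delta\rho_1$ (with $\Delta\vartheta_c=0$), bound it via $\|\nabla\Theta_\mu^j\|_{L^1}\lesssim \mu^{-((d-1)/p'-1)}$, and observe that the second condition $p'<d-1$ in \eqref{eq:p:ineq:diffu} makes this exponent positive so the term is absorbed like the others after taking $\mu=\lambda^c$ with $c$ large. The paper folds the new term into $R^{\rm linear}$ and names the exponent $\gamma_4=(d-1)/p'-1$ while you split off an $R^{\rm diff}$ and call it $\gamma_1-1$, but these are the same quantity and the argument is the same.
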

\begin{proof}[Sketch of the proof of Proposition \ref{p:main:diffusion}]
Exactly as in the proof of Proposition \ref{p:main}, we define the Mikado densities and fields as in Proposition \ref{p:mikado} and we choose the exponents $a,b$ as in Remark \ref{rmk:choice:ab}. We observe that, in addition to \eqref{eq:mikado:est:1}, \eqref{eq:mikado:est:2}, it also holds
\begin{equation}
\label{eq:mikado:est:2:d}
\|\nabla \Theta_\mu^j\|_{L^1} \leq M \mu^{-\gamma_4} \leq M \mu^{-\gamma}
\end{equation}
for 
\begin{equation*}
\gamma = \min\big\{\gamma_1, \gamma_2, \gamma_3, \gamma_4 \big\} > 0,
\end{equation*}
where $\gamma_1, \gamma_2, \gamma_3$ were defined in Remark \ref{rmk:choice:ab} and 
\begin{equation*}
\gamma_4 := \frac{d-1}{p'} - 1 > 0,
\end{equation*}
because of the second condition in \eqref{eq:p:ineq:diffu}. Then the perturbations $\vartheta, w, \vartheta_c, w_c$ can be defined as in Section \ref{ss:def:pert} and the estimates in Section \ref{ss:estimates-perturbation} continue to hold. In the definition of the new defect field in Section \ref{s:reynolds} we want to define $R_1$ so that
$$
-\div R_1=\partial_t\rho_1+\div(\rho_1u_1)-\Delta\rho_1,
$$
which leads to an additional term $\nabla\theta$ in the expression for $R^{\rm linear}$ in \eqref{eq:rlin:rcorr}. As a consequence the only estimate which changes is Lemma \ref{l:rlinear}. From \eqref{eq:mikado:est:2:d} and the expression for $\theta$ in \eqref{eq:perturbation} we easily obtain
$$
\|\nabla\theta\|_{L^1(\T^d)}\leq C(\eta,\delta,\|R_0\|_{C^1})\lambda\mu^{-\gamma}.
$$
Since we choose $\mu=\lambda^c$ with $c>1/\gamma$ in Section \ref{s:proof:prop}, the final estimates for $\|R_1(t)\|_{L^1}$ continue to hold. This concludes the proof of proposition (and thence also the proof of Theorem \ref{thm:diffusion}).  
\end{proof}

\subsection{Sketch of the proof of Theorem \ref{thm:strong}}

Also for Theorem \ref{thm:strong} there is a \emph{main} proposition, analog to Proposition \ref{p:main}.
\begin{proposition}
\label{p:main:strong}
There exists a constant $M>0$ and an exponent $s \in (1, \infty)$ such that  
the following holds. Let $\eta, \delta, \sigma>0$ and let $(\rho_0, u_0, R_0)$ be a smooth solution of the continuity-defect equation \eqref{eq:cont:reyn}. Then there exists another smooth solution $(\rho_1, u_1, R_1)$ of \eqref{eq:cont:reyn} such that
\begin{subequations}
\begin{align}
\|\rho_1(t) - \rho_0(t)\|_{L^s(\T^d)}		& \leq 
\begin{cases}
M \eta \|R_0(t)\|^{1/s}_{L^1(\T^d)}, 		& t \in I_{\sigma/2},  \\
0, 											& t \in [0,1] \setminus I_{\sigma/2},
\end{cases} \label{eq:strong:dist:rho:stat} \\
\|u_1(t) - u_0(t)\|_{L^{s'}(\T^d)}			& \leq 
\begin{cases}
M \eta^{-1} \|R_0(t)\|^{1/s'}_{L^1(\T^d)}, 		& t \in I_{\sigma/2}, \\
0, 													& t \in [0,1] \setminus I_{\sigma/2}, 
\end{cases} \label{eq:strong:dist:u:1:stat} \\
\left.
\begin{aligned}
\|\rho_1 (t) -  \rho_0(t)\|_{W^{m,p}(\T^d)} \\[.5em] \|u_1(t) - u_0(t)\|_{W^{\tilde m,\tilde p}(\T^d)}
\end{aligned} 
\right\}
& \leq \ \ \delta, \label{eq:strong:dist:u:2:stat} \\[.1em]
\|R_1(t)\|_{L^1(\T^d)} 						& \leq
\begin{cases}
\delta,										& t \in I_\sigma, \\
\|R_0(t)\|_{L^1(\T^d)} + \delta,			& t \in I_{\sigma/2} \setminus I_\sigma, \\
\|R_0(t)\|_{L^1(\T^d)},					& t \in [0,1] \setminus I_{\sigma/2}.
\end{cases} \label{eq:strong:reyn:stat}
\end{align}
\end{subequations}

\end{proposition}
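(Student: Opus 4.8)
The plan is to run the proof of Proposition~\ref{p:main} essentially verbatim; the only new ingredient is a careful choice of an auxiliary integrability exponent $s\in(1,\infty)$ together with the concentration exponents $a,b$ of the Mikado building blocks. By \eqref{eq:mikado:est} the Mikado density and field carry factors $\mu^{a+k-(d-1)/r}$ and $\mu^{b+k-(d-1)/r}$ in $W^{k,r}$, and the idea is to arrange that the zeroth order norms in $L^s$, $L^{s'}$ are of unit size (this produces the ``anomalous'' part of the solution and gives \eqref{eq:strong:dist:rho:stat}--\eqref{eq:strong:dist:u:1:stat}), while the $W^{m,p}$ and $W^{\tilde m,\tilde p}$ norms decay in $\mu$ (this gives the smallness \eqref{eq:strong:dist:u:2:stat}).

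Concretely, I would first fix $1/s$ in the open interval
\[
1-\tfrac1{\tilde p}+\tfrac{\tilde m}{d-1}\ <\ \tfrac1s\ <\ \tfrac1p-\tfrac{m}{d-1},
\]
which is nonempty and contained in $(0,1)$ precisely because of \eqref{eq:strong:p:ineq} (together with the trivial bounds $1/p,1/\tilde p\le1$). Set $s'=s/(s-1)$ and choose the exponents in Proposition~\ref{p:mikado} as $a=(d-1)/s$, $b=(d-1)/s'$, so that $a+b=d-1$. With this choice $\|\Theta_\mu^j\|_{L^s}$, $\|W_\mu^j\|_{L^{s'}}$ and $\|\Theta_\mu^j W_\mu^j\|_{L^1}$ are $O(1)$, which fixes $M$ as in Remark~\ref{rmk:choice:ab} with $p,p'$ replaced by $s,s'$; moreover $\|\Theta_\mu^j\|_{L^1}\lesssim\mu^{-(d-1)/s'}$, $\|W_\mu^j\|_{L^1}\lesssim\mu^{-(d-1)/s}$ and, by \eqref{eq:mikado:est}, $\|\Theta_\mu^j\|_{W^{m,p}}\lesssim\mu^{-\gamma_3'}$, $\|W_\mu^j\|_{W^{\tilde m,\tilde p}}\lesssim\mu^{-\gamma_4'}$ with $\gamma_3'=(d-1)(1/p-1/s)-m>0$ and $\gamma_4'=(d-1)(1/\tilde p-1/s')-\tilde m>0$, the two inequalities defining $1/s$ being exactly the positivity of $\gamma_3'$ and $\gamma_4'$. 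Put $\gamma=\min\{(d-1)/s,\ (d-1)/s',\ \gamma_3',\ \gamma_4'\}>0$.

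Next I would define $\rho_1=\rho_0+\vartheta+\vartheta_c$, $u_1=u_0+w+w_c$ exactly as in Section~\ref{ss:def:pert}, but with $|R_{0,j}|^{1/s}$ and $|R_{0,j}|^{1/s'}$ in place of $|R_{0,j}|^{1/p}$ and $|R_{0,j}|^{1/p'}$ in \eqref{eq:perturbation}, and define the new defect field $R_1$ by repeating verbatim the computation of Section~\ref{s:reynolds} (which uses only $\div W_\mu^j=\div(\Theta_\mu^j W_\mu^j)=0$, $\fint\Theta_\mu^j W_\mu^j=e_j$ and the disjointness of supports, all unchanged). The estimates of Sections~\ref{ss:estimates-perturbation}--\ref{s:reynolds} transfer line by line: the improved H\"older inequality (Lemma~\ref{l:improved:holder}) with exponent $s$, resp.\ $s'$, yields \eqref{eq:strong:dist:rho:stat}, resp.\ \eqref{eq:strong:dist:u:1:stat}; the $L^1$ bounds for $R^{\rm quadr},R^\chi,R^\psi,R^{\rm linear},R^{\rm corr}$ follow as in Lemmas~\ref{l:rquadr}--\ref{l:rcorr} from the $\mu^{-\gamma}$- and $\lambda^{-1}$-decay above, giving \eqref{eq:strong:reyn:stat}. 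The only difference is that up to $m$ (resp.\ $\tilde m$) spatial derivatives may now fall on the coefficients $\psi\chi_j|R_{0,j}|^{1/s}$, so the constants acquire a dependence on higher $C^k$-norms of $R_0$, which is harmless. For the genuinely new bound \eqref{eq:strong:dist:u:2:stat} one differentiates $\vartheta$ and $w$ in $x$; the dominant terms are $\lambda^m(D^m\Theta_\mu^j)(\lambda\,\cdot)$ and $\lambda^{\tilde m}(D^{\tilde m}W_\mu^j)(\lambda\,\cdot)$, so $\|\vartheta(t)\|_{W^{m,p}}\lesssim\lambda^m\mu^{-\gamma_3'}$ and $\|w(t)\|_{W^{\tilde m,\tilde p}}\lesssim\lambda^{\tilde m}\mu^{-\gamma_4'}$, while $\vartheta_c$ (a function of $t$ alone) and $w_c$ are of strictly lower order by Lemma~\ref{l:improved:antidiv} applied with $k=m$, resp.\ $k=\tilde m$. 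As in Section~\ref{s:proof:prop} one now sets $\mu=\lambda^c$ with $c>\max\{1/\gamma,\ m/\gamma_3',\ \tilde m/\gamma_4'\}$ and takes $\lambda$ large, so that every term above, and every term in $\|R_1(t)\|_{L^1}$, is $\le\delta$. The deduction of Theorem~\ref{thm:strong} is then the same iteration used to derive Theorem~\ref{thm:main} from Proposition~\ref{p:main}: the $L^s$/$L^{s'}$ bounds give $\rho\in C_tL^s$, $u\in C_tL^{s'}$, hence $\rho u\in C_tL^1$, and the $W^{m,p}$ bound gives the $\e$-closeness and the matching of initial and final data.

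The only point with real content — hence the main obstacle — is the elementary interval computation of the second paragraph: one needs a \emph{single} scaling $\mu=\lambda^c$ that simultaneously beats the loss $\lambda^m$ from differentiating $\vartheta$ and the loss $\lambda^{\tilde m}$ from differentiating $w$, and this is possible exactly when both $\gamma_3'>0$ and $\gamma_4'>0$ can be arranged, i.e.\ exactly when the admissible interval for $1/s$ is nonempty, i.e.\ exactly when $1/p+1/\tilde p>1+(m+\tilde m)/(d-1)$. Everything else is a cosmetic rewriting of the proof of Proposition~\ref{p:main}.
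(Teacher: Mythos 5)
Your proposal is correct and follows essentially the same path as the paper's own sketch: the paper first picks $a,b\in(0,d-1)$ with $a+b=d-1$ satisfying $a+m-(d-1)/p<0$ and $b+\tilde m-(d-1)/\tilde p<0$, and then sets $s=(d-1)/a$, $s'=(d-1)/b$; your parametrization by $1/s$ is the same choice written in the other variable, and your interval $\bigl(1-\tfrac1{\tilde p}+\tfrac{\tilde m}{d-1},\ \tfrac1p-\tfrac m{d-1}\bigr)$ is exactly \eqref{eq:max:min} after dividing by $d-1$. The remaining bookkeeping (improved H\"older in $L^s$/$L^{s'}$, the loss $\lambda^m$, $\lambda^{\tilde m}$ from spatial derivatives hitting $\Theta_\mu^j$, $W_\mu^j$, the lower-order bounds on $\vartheta_c$, $w_c$ via Lemma \ref{l:improved:antidiv}, and the scaling $\mu=\lambda^c$ with $c$ large) matches what the paper leaves implicit, and you fill it in correctly.
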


\noindent Theorem \ref{thm:strong} can be deduced from Proposition \ref{p:main:strong} exactly in the same way as Theorem \ref{thm:main} was deduced from Proposition \ref{p:main}. The only difference here is the following. In general, it is not true that $\rho(t) \in L^p(\T^d)$, $u(t) \in L^{p'} (\T^d)$.  Therefore the fact that $\rho u \in C((0,T); L^1(\T^d))$ is proven by showing that $\rho \in C((0,T); L^s(\T^d))$ (thanks to  \eqref{eq:strong:dist:rho:stat}) and $u \in C((0,T); L^{s'} (\T^d))$ (thanks to \eqref{eq:strong:dist:u:1:stat}).
%

\begin{proof}[Sketch of the proof of Proposition \ref{p:main:strong}]
The proof is analog to the proof of Proposition \ref{p:main} presented in Sections \ref{s:perturbations}-\ref{s:proof:prop}. Here, however, we need modify the ``rate of concentration'' of the Mikado fields defined in \eqref{eq:non:periodic} to achieve better estimates on the derivatives. In other words, we have to modify the choice of $a,b$ in \eqref{eq:non:periodic}, as follows.
In order to get estimates \eqref{eq:strong:dist:rho:stat}-\eqref{eq:strong:dist:u:1:stat}, we want
\begin{equation}
\label{eq:strong:mikado:est:1}
\begin{split}
\|\Theta_\mu^j\|_{L^s(\T^d)}, \   \|W_\mu^j\|_{L^{s'}(\T^d)}, 
& \leq \text{const.},
\end{split}
\end{equation}
to get \eqref{eq:strong:dist:u:2:stat} we want
\begin{equation}
\label{eq:strong:mikado:est:2}
\begin{split}
\|\Theta_\mu^j\|_{W^{m, p}} \leq \text{const} \cdot \mu^{-\gamma}, \ 
\|W_\mu^j\|_{W^{\tilde m, \tilde p}} 		\leq \text{const} \cdot \mu^{-\gamma}; 
\end{split}
\end{equation}
we also require
\begin{equation}
\label{eq:strong:mikado:est:3}
\begin{split}
\|\Theta_\mu^j\|_{L^1(\T^d)}, 		\leq \text{const} \cdot \mu^{-\gamma}, \ \  
\|W_\mu^j\|_{L^{1}(\T^d)}			\leq \text{const} \cdot \mu^{-\gamma}, \ \ 
\end{split}
\end{equation}
for some positive constant $\gamma >0$. 
Compare \eqref{eq:strong:mikado:est:1} with the first and the second estimates in \eqref{eq:mikado:est:1}, compare \eqref{eq:strong:mikado:est:2} with the last estimate in \eqref{eq:mikado:est:2} and compare \eqref{eq:strong:mikado:est:3} with the first and the second estimates in \eqref{eq:mikado:est:2}.

We want to find 
\begin{equation}
\label{eq:ab:bound}
a,b \in (0, d-1),
\end{equation}
so that $a+b = d-1$ and \eqref{eq:strong:mikado:est:2} is achieved. If we can do that, then condition \eqref{eq:strong:mikado:est:3} is a consequence of \eqref{eq:mikado:est} and \eqref{eq:ab:bound}. Similarly, condition \eqref{eq:strong:mikado:est:1} is automatically satisfied, choosing 
\begin{equation*}
s = \frac{d-1}{a}, \quad s' = \frac{d-1}{b}.
\end{equation*}
and observing that \eqref{eq:ab:bound} implies $s,s' \in (1,\infty)$. 

Using \eqref{eq:mikado:est}, we see that, to achieve \eqref{eq:strong:mikado:est:2}, we need
\begin{subequations}
\label{eq:system:2}
\begin{align}[left = \empheqlbrace\,]
a + m - \frac{d-1}{p} & < 0, 				\label{eq:system:2:a}	\\
b + \tilde m - \frac{d-1}{\tilde p} & < 0.	\label{eq:system:2:b}
\end{align}
\end{subequations}
Notice that, since $a+b = d-1$, \eqref{eq:system:2:b} is equivalent to 
\begin{equation*}
a > (d-1) \bigg( 1 - \frac{1}{\tilde p} \bigg) + \tilde m.
\end{equation*}
It is then possible to find $a,b$ satisfying \eqref{eq:ab:bound} and \eqref{eq:system:2}, with $a+d = d-1$, if and only if 
\begin{equation}
\label{eq:max:min}
\max \Bigg\{ 0, \ (d-1) \bigg( 1 - \frac{1}{\tilde p} \bigg) + \tilde m \Bigg\} < \min \Bigg\{d-1, \ \frac{d-1}{p} - m \Bigg\}
\end{equation}
and this last condition is equivalent to \eqref{eq:strong:p:ineq}.

Proposition \ref{p:main:strong} can now be proven exactly as we proved Proposition \ref{p:main} in Sections \ref{s:perturbations}-\ref{s:proof:prop}, this time using \eqref{eq:strong:mikado:est:1}-\eqref{eq:strong:mikado:est:3} instead of \eqref{eq:mikado:est:1}-\eqref{eq:mikado:est:2}. 
\end{proof}

\subsection{Sketch of the proof of Theorem \ref{thm:diffusion:higher}}

Once again, also for Theorem \ref{thm:diffusion:higher} there is a \emph{main} proposition, from which the proof of the theorem follows. Let us consider the equation
\begin{equation}
\label{eq:continuity:diffusion:defect}
\left\{
\begin{aligned}
\partial_t \rho + \div(\rho u) - L \rho & = - \div R, \\
\div u & = 0.
\end{aligned}
\right.
\end{equation}
Recall that $L$ is a constant coefficient differential operator of order $k \in \N$, $k \geq 2$. 
\begin{proposition}
\label{p:main:diffusion:higher}
Proposition \ref{p:main:strong} holds with \eqref{eq:continuity:diffusion:defect} instead of \eqref{eq:cont:reyn}. 
\end{proposition}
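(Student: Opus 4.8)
The plan is to run the proof of Proposition~\ref{p:main:strong} from Sections~\ref{s:perturbations}--\ref{s:proof:prop} essentially word for word, with the Mikado densities $\Theta_\mu^j$ and fields $W_\mu^j$ of Proposition~\ref{p:mikado} and the perturbations $\vartheta,\vartheta_c,w,w_c$ of Section~\ref{ss:def:pert}; the only genuinely new point is the extra term $-L\rho_1$ in \eqref{eq:continuity:diffusion:defect}. As in the sketch of Proposition~\ref{p:main:diffusion}, when we solve $-\div R_1=\partial_t\rho_1+\div(\rho_1u_1)-L\rho_1$ the computation of Section~\ref{s:reynolds} is unchanged except that $L\rho_0$ is already carried by $R_0$ (by \eqref{eq:continuity:diffusion:defect}), so there is a single new contribution $-L(\vartheta+\vartheta_c)$ which must be put into divergence form.

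Since $L$ has order $k\ge2$, I would write $L=\sum_{i=1}^d\partial_iM_i$ (plus, if present, a zeroth-order term $c_0$; intermediate-order terms are analogous and strictly easier), where the $M_i$ are constant-coefficient operators of order $k-1\ge1$. As $M_i$ has order $\ge1$ it annihilates the space-constant $\vartheta_c$, and as $\vartheta+\vartheta_c$ has zero spatial mean one has $c_0(\vartheta+\vartheta_c)=\div\!\big(c_0\nabla\Delta^{-1}(\vartheta+\vartheta_c)\big)$; hence I would set
\[
R^{\mathrm{diff}}:=-\big(M_1\vartheta,\dots,M_d\vartheta\big)-c_0\,\nabla\Delta^{-1}(\vartheta+\vartheta_c),\qquad \div R^{\mathrm{diff}}=-L(\vartheta+\vartheta_c),
\]
and add $R^{\mathrm{diff}}$ to $R^{\mathrm{linear}}$ in \eqref{eq:rlin:rcorr}, keeping everything else --- all the estimates of Section~\ref{s:perturbations} and the terms $R^{\mathrm{quadr}},R^\chi,R^\psi,R^{\mathrm{corr}}$ in \eqref{eq:reynolds} --- untouched.

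The one estimate to add is a bound on $\|R^{\mathrm{diff}}(t)\|_{L^1}$. Expanding $M_i\vartheta$ by Leibniz, it is a finite sum of terms of the form (derivatives of the $R_0$-coefficients)$\,\cdot\,\lambda^{|\beta|}(D^\beta\Theta_\mu^j)(\lambda\,\cdot)$ with $|\beta|\le k-1$; feeding in $\|D^\beta\Theta_\mu^j\|_{L^1}\le\|\Phi\|_{L^1(\R^{d-1})}\mu^{a+|\beta|-(d-1)}$ from \eqref{eq:mikado:est} (just as in \eqref{eq:mikado:est:2:d}) and bounding the $c_0$-term via Lemma~\ref{l:antidiv} and Section~\ref{s:perturbations}, the dominant term ($|\beta|=k-1$) gives
\[
\|R^{\mathrm{diff}}(t)\|_{L^1(\T^d)}\le C\big(\eta,\delta,\|R_0\|_{C^{k}}\big)\,\lambda^{k-1}\,\mu^{-\gamma_5},\qquad \gamma_5:=(d-1)-a-(k-1)=b-(k-1).
\]
So I need $\gamma_5>0$, i.e.\ $b>k-1$, on top of the constraints \eqref{eq:system:2} inherited from Proposition~\ref{p:main:strong} (which amount to $(d-1)/p'+m<b$ and $b<(d-1)/\tilde p-\tilde m$) and $0<b<d-1$. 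Thus $b$ must be chosen with
\[
\max\!\Big\{0,\ \tfrac{d-1}{p'}+m,\ k-1\Big\}<b<\min\!\Big\{d-1,\ \tfrac{d-1}{\tilde p}-\tilde m\Big\},
\]
and this interval is nonempty exactly under \eqref{eq:p:ineq:diffu:2}: its first inequality is precisely $(d-1)/p'+m<(d-1)/\tilde p-\tilde m$ (and forces $m<(d-1)/p$ and $k<d$), while its second, $\tilde p<(d-1)/(\tilde m+k-1)$, is precisely $k-1<(d-1)/\tilde p-\tilde m$. Fixing such a $b$, putting $a=d-1-b$ and $s=(d-1)/a$, $s'=(d-1)/b\in(1,\infty)$ yields the exponents of the statement; and in Section~\ref{s:proof:prop}, choosing $\mu=\lambda^c$ with $c$ large enough --- now also $c>(k-1)/\gamma_5$, besides the thresholds already needed for \eqref{eq:strong:dist:u:2:stat} --- gives $\lambda^{k-1}\mu^{-\gamma_5}=\lambda^{(k-1)-c\gamma_5}\to0$, so $R^{\mathrm{diff}}$ is absorbed alongside $R^{\mathrm{quadr}}$, $R^{\mathrm{corr}}$ and the linear part of $R^{\mathrm{linear}}$, and the inductive estimates \eqref{eq:strong:dist:rho:stat}--\eqref{eq:strong:reyn:stat} follow exactly as before.

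I expect no new analytic difficulty here: the proof reduces entirely to the machinery of Propositions~\ref{p:main}, \ref{p:main:strong} and \ref{p:main:diffusion}. The one point that needs care --- and the main, if modest, obstacle --- is the exponent arithmetic: noticing that putting $L$ in divergence form leaves only $k-1$ derivatives falling on $\Theta_\mu^j$ before the $L^1$ norm (so the loss is $\lambda^{k-1}\mu^{(k-1)-b}$, not the too-costly $\lambda^k\mu^{k-b}$), and then verifying that the feasibility condition $b>k-1$ is compatible with the Proposition~\ref{p:main:strong} constraints precisely when \eqref{eq:p:ineq:diffu:2} holds.
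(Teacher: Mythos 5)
Your proposal is correct and matches the paper's argument essentially line for line: the paper also puts the extra diffusion term in divergence form $L=\div\tilde L$ with $\tilde L$ of order $k-1$ so that only $k-1$ derivatives land on $\Theta^j_\mu(\lambda\,\cdot)$, estimates the resulting piece of $R^{\rm linear}$ by $C\,\lambda^{k-1}\mu^{-\gamma}$ using $\|D^{k-1}\Theta^j_\mu\|_{L^1}\le\text{const}\cdot\mu^{a+(k-1)-(d-1)}$, imposes the extra constraint $a<d-k$ (equivalently $b>k-1$, your $\gamma_5>0$), checks that the resulting interval for $a$ (or $b$) is nonempty precisely under \eqref{eq:p:ineq:diffu:2}, and closes by taking $\mu=\lambda^c$ with $c>(k-1)/\gamma$. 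The only slight refinement in your write-up is that you handle a possible zeroth-order part $c_0$ of $L$ explicitly (via $\nabla\Delta^{-1}$ applied to the zero-mean perturbation $\vartheta+\vartheta_c$), whereas the paper's formulation $L=\div\tilde L$ tacitly glosses over that term; your treatment is the correct way to make that step rigorous.
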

\begin{proof}[Sketch of the proof of Proposition \ref{p:main:diffusion:higher}]
Similarly to the proof of Proposition \ref{p:main:strong}, we want to choose the exponents $a,b \in (0, d-1)$ in Proposition \ref{p:mikado} so that \eqref{eq:strong:mikado:est:1}-\eqref{eq:strong:mikado:est:3} are satisfied and, moreover,
\begin{equation}
\label{eq:higher:diff}
\|D^{k-1} \Theta_\mu^j\|_{L^1(\T^d)} \leq \textrm{const} \cdot \mu^{-\gamma},
\end{equation}
for some positive constant $\gamma>0$. As in Proposition \ref{p:main:strong}, to get \eqref{eq:strong:mikado:est:1}-\eqref{eq:strong:mikado:est:3} we need \eqref{eq:max:min}. Moreover, condition \eqref{eq:higher:diff} is satisfied, provided
\begin{equation*}
a + (k-1) - (d-1) < 0,
\end{equation*}
or, equivalently,
\begin{equation}
\label{eq:higher:diff:a}
a < d - k.
\end{equation}
Putting together \eqref{eq:max:min} and \eqref{eq:higher:diff:a}, we obtain the condition
\begin{equation*}
\max \Bigg\{ 0, \ (d-1) \bigg( 1 - \frac{1}{\tilde p} \bigg) + \tilde m \Bigg\} < \min \Bigg\{d-1, \ d-k, \ \frac{d-1}{p} - m \Bigg\}.
\end{equation*}
It is now not difficult to see that the last inequality is satisfied if and only if \eqref{eq:p:ineq:diffu:2} holds. 

Then the perturbations $\vartheta, w, \vartheta_c, w_c$ can be defined as in Section \ref{ss:def:pert} and the estimates on the perturbations can be proven as in Proposition \ref{p:main:strong}. In the definition of the new defect field we want to define $R_1$ so that
$$
-\div R_1=\partial_t\rho_1+\div(\rho_1u_1)-L\rho_1.
$$
We can write $L = \div \tilde L$, where $\tilde L$ is a constant coefficient differential operator of order $k-1$. This leads to an additional term $\tilde L \theta$ in the expression for $R^{\rm linear}$ (compare with \eqref{eq:rlin:rcorr}), which can be estimated using \eqref{eq:higher:diff}:
$$
\|\tilde L \theta\|_{L^1(\T^d)}\leq C(\eta,\delta,\|R_0\|_{C^1})\lambda^{k-1} \mu^{-\gamma}.
$$
Choosing $\mu=\lambda^c$ with $c>(k-1)/\gamma$, we get the estimates for $\|R_1(t)\|_{L^1}$. This concludes the proof of the proposition (and thence also the proof of Theorem \ref{thm:diffusion:higher}).  
\end{proof}

\bibliographystyle{acm}
\bibliography{transport}

\begin{thebibliography}{10}

\bibitem{Aizenman:1978tx}
{\sc Aizenman, M.}
\newblock {On vector fields as generators of flows: a counterexample to
  Nelson's conjecture}.
\newblock {\em Ann. Math. (2) 107}, 2 (1978), 287--296.

\bibitem{Alberti:2014dl}
{\sc Alberti, G., Bianchini, S., and Crippa, G.}
\newblock {A uniqueness result for the continuity equation in two dimensions}.
\newblock {\em J. Eur. Math. Soc. 16}, 2 (2014), 201--234.

\bibitem{Alberti:2014cx}
{\sc Alberti, G., Crippa, G., and Mazzucato, A.~L.}
\newblock {Exponential self-similar mixing and loss of regularity for
  continuity equations}.
\newblock {\em C. R. Math. Acad. Sci. Paris 352}, 11 (2014), 901--906.

\bibitem{Alberti:2016wq}
{\sc Alberti, G., Crippa, G., and Mazzucato, A.~L.}
\newblock {Exponential self-similar mixing by incompressible flows}.
\newblock {\em arXiv\/} (May 2016).

\bibitem{Ambrosio:2004cva}
{\sc Ambrosio, L.}
\newblock {Transport equation and Cauchy problem for BV vector fields}.
\newblock {\em Invent. math. 158}, 2 (2004), 227--260.

\bibitem{Ambrosio2017}
{\sc Ambrosio, L.}
\newblock {Well posedness of ODE€™s and continuity equations with nonsmooth
  vector fields, and applications}.
\newblock {\em Rev. Mat. Complut. 30}, 3 (2017), 427--450.

\bibitem{Ambrosio2015}
{\sc Ambrosio, L., Colombo, M., and Figalli, A.}
\newblock {Existence and uniqueness of maximal regular flows for non-smooth
  vector fields}.
\newblock {\em Arch. Rational Mech. Anal. 218\/} (2015), 1043--1081.

\bibitem{Bianchini:2017vf}
{\sc Bianchini, S., and Bonicatto, P.}
\newblock {A uniqueness result for the decomposition of vector fields in Rd}.
\newblock {\em SISSA\/} (2017).

\bibitem{Bianchini2017}
{\sc Bianchini, S., Colombo, M., Crippa, G., and Spinolo, L.~V.}
\newblock {Optimality of integrability estimates for advection-diffusion
  equations}.
\newblock {\em Nonlinear Differential Equations and Applications NoDEA 24}, 4
  (2017), 33.

\bibitem{Buckmaster:2013vv}
{\sc Buckmaster, T.}
\newblock {Onsager's conjecture almost everywhere in time}.
\newblock {\em Comm. Math. Phys. 333}, 3 (2015), 1175--1198.

\bibitem{Buckmaster:2014ty}
{\sc Buckmaster, T., De~Lellis, C., Isett, P., and Sz{\'e}kelyhidi~Jr, L.}
\newblock {Anomalous dissipation for 1/5-H\"older Euler flows}.
\newblock {\em Ann. of Math. (2) 182}, 1 (2015), 127--172.

\bibitem{Buckmaster:2014th}
{\sc Buckmaster, T., De~Lellis, C., and Sz{\'e}kelyhidi~Jr, L.}
\newblock {Dissipative Euler flows with Onsager-critical spatial regularity}.
\newblock {\em Comm. Pure Appl. Math.}, 9 (2016), 1613--1670.

\bibitem{Buckmaster:2017uz}
{\sc Buckmaster, T., De~Lellis, C., Sz{\'e}kelyhidi~Jr, L., and Vicol, V.}
\newblock {Onsager's conjecture for admissible weak solutions}.
\newblock {\em arXiv\/} (2017).

\bibitem{Buckmaster:2017wf}
{\sc Buckmaster, T., and Vicol, V.}
\newblock {Nonuniqueness of weak solutions to the Navier-Stokes equation}.
\newblock {\em arXiv\/} (2017).

\bibitem{Caravenna:2016kg}
{\sc Caravenna, L., and Crippa, G.}
\newblock {Uniqueness and Lagrangianity for solutions with lack of
  integrability of the continuity equation}.
\newblock {\em C. R. Math. Acad. Sci. Paris 354}, 12 (2016), 1168--1173.

\bibitem{Colombini:2002wp}
{\sc Colombini, F., and Lerner, N.}
\newblock {Uniqueness of continuous solutions for BV vector fields}.
\newblock {\em Duke Math. J. 111}, 2 (2002), 357--384.

\bibitem{Colombini:2003wl}
{\sc Colombini, F., Luo, T., and Rauch, J.}
\newblock {Uniqueness and nonuniqueness for nonsmooth divergence free
  transport}.
\newblock In {\em Seminaire: \'Equations aux D\'eriv\'ees Partielles,
  2002--2003}. \'Ecole Polytech., Palaiseau, 2003, pp.~Exp.\ No.\ XXII--21.

\bibitem{Crippa:2014ta}
{\sc Crippa, G., Gusev, N., Spirito, S., and Wiedemann, E.}
\newblock {Non-uniqueness and prescribed energy for the continuity equation}.
\newblock 1937--1947.

\bibitem{Crippa:2015er}
{\sc Crippa, G., and Spirito, S.}
\newblock {Renormalized Solutions of the 2D Euler Equations}.
\newblock {\em Comm. Math. Phys. 339}, 1 (2015), 191--198.

\bibitem{SzekelyhidiJr:2016tp}
{\sc Daneri, S., and Sz{\'e}kelyhidi~Jr, L.}
\newblock {Non-uniqueness and h-principle for H\"older-continuous weak
  solutions of the Euler equations}.
\newblock {\em Arch. Rational Mech. Anal. 224}, 2 (2017), 471--514.

\bibitem{DeLellis:2009jh}
{\sc De~Lellis, C., and Sz{\'e}kelyhidi~Jr, L.}
\newblock {The Euler equations as a differential inclusion}.
\newblock {\em Ann. of Math. (2) 170}, 3 (2009), 1417--1436.

\bibitem{DeLellis:2013im}
{\sc De~Lellis, C., and Sz{\'e}kelyhidi~Jr, L.}
\newblock {Dissipative continuous Euler flows}.
\newblock {\em Invent. math. 193}, 2 (2012), 377--407.

\bibitem{DeLellis:2012tz}
{\sc De~Lellis, C., and Sz{\'e}kelyhidi~Jr, L.}
\newblock {Dissipative Euler flows and Onsager's conjecture}.
\newblock {\em J. Eur. Math. Soc. 16}, 7 (2014), 1467--1505.

\bibitem{Depauw:2003wl}
{\sc Depauw, N.}
\newblock {Non unicit{\'e} des solutions born{\'e}es pour un champ de vecteurs
  BV en dehors d'un hyperplan}.
\newblock {\em C. R. Math. Acad. Sci. Paris 337}, 4 (2003), 249--252.

\bibitem{DiPerna:1989:Annals}
{\sc DiPerna, R.~J., and Lions, P.-L.}
\newblock {On the Cauchy Problem for Boltzmann Equations: Global Existence and
  Weak Stability}.
\newblock {\em Ann. of Math. 130}, 2 (1989), 321--366.

\bibitem{DiPerna:1989vo}
{\sc DiPerna, R.~J., and Lions, P.-L.}
\newblock {Ordinary differential equations, transport theory and Sobolev
  spaces}.
\newblock {\em Invent. math. 98}, 3 (1989), 511--547.

\bibitem{FrischBook}
{\sc Frisch, U.}
\newblock {\em {Turbulence}}.
\newblock Cambridge University Press, Cambridge, 1995.

\bibitem{Frisch:1978dv}
{\sc Frisch, U., Sulem, P.-L., and Nelkin, M.}
\newblock {A simple dynamical model of intermittent fully developed
  turbulence}.
\newblock {\em Journal of Fluid Mechanics 87}, 04 (1978), 719--736.

\bibitem{Isett:2016to}
{\sc Isett, P.}
\newblock {A Proof of Onsager's Conjecture}.
\newblock {\em arXiv\/} (2016).

\bibitem{Isett:2014uw}
{\sc Isett, P., and Vicol, V.}
\newblock {H\"older continuous solutions of active scalar equations}.
\newblock {\em Ann. PDE 1}, 1 (2015), Art. 2--77.

\bibitem{Ladyzenskaja:1968}
{\sc Lady\v{z}enskaja, O., Solonnikov, V., and Ural'ceva, N.}
\newblock {Linear and quasilinear equations of parabolic type}.
\newblock {\em Translations of Mathematical Monographs 23\/} (1968).

\bibitem{LeBris2004}
{\sc Le~Bris, C., and Lions, P.-L.}
\newblock {Renormalized solutions of some transport equations with partially
  $W^{1,1}$ velocities and applications}.
\newblock {\em Annali di Matematica Pura ed Applicata 183}, 1 (2004), 97--130.

\bibitem{LeBris2008}
{\sc Le~Bris, C., and Lions, P.-L.}
\newblock {Existence and Uniqueness of Solutions to Fokker-Planck Type
  Equations with Irregular Coefficients}.
\newblock {\em Communications in Partial Differential Equations 33}, 7 (2008),
  1272--1317.

\bibitem{Levy:2016tl}
{\sc L{\'e}vy, G.}
\newblock {On uniqueness for a rough transport-diffusion equation}.
\newblock {\em C. R. Math. Acad. Sci. Paris 354}, 8 (2016), 804--807.

\bibitem{Lions:1996vo}
{\sc Lions, P.-L.}
\newblock {\em {Mathematical Topics in Fluid Mechanics: Volume 1:
  Incompressible Models}}.
\newblock Oxford University Press, June 1996.

\bibitem{Lions:1998vp}
{\sc Lions, P.-L.}
\newblock {\em {Mathematical topics in fluid mechanics. Vol. 2}}, vol.~10 of
  {\em Oxford Lecture Series in Mathematics and its Applications}.
\newblock The Clarendon Press, Oxford University Press, New York, 1998.

\bibitem{Yao:2017bp}
{\sc Yao, Y., and Zlato, A.}
\newblock {Mixing and un-mixing by incompressible flows}.
\newblock {\em J. Eur. Math. Soc. 19}, 7 (2017), 1911--1948.

\end{thebibliography}

\end{document}